\documentclass[10pt]{article}
\usepackage[utf8]{inputenc}

\usepackage{amsfonts,amsmath, amssymb,amsthm,amscd}
\usepackage{tikz}
\usepackage{color}
\usepackage{mathtools}
\usepackage{dsfont}
\usepackage{comment}
\usepackage{geometry}

\newtheorem{theorem}{Theorem}
\newtheorem{proposition}{Proposition}
\newtheorem{corollary}{Corollary}

\newtheorem{lemma}{Lemma}
\newtheorem{definition}{Definition}

\newtheorem{remark}{Remark}
\newtheorem{example}{Example}

\newcommand{\E}{{\mathbb E }}
\newcommand{\C}{{\mathbb C }}
\newcommand{\R}{{\mathbb R }}

\renewcommand{\Im}{{\text{Im} }}
\renewcommand{\i}{{\text{i} }}
\renewcommand{\P}{{\mathbb P}}
\newcommand{\p}{{\mathbf p}}

\allowdisplaybreaks
\begin{document}
	\begin{minipage}{0.85\textwidth}
	\end{minipage}
	\begin{center}
		\large\bf
Central limit theorem for eigenvalue statistics of sample covariance matrix with random population 
	\end{center}
	
	\begin{center} 
		\begin{minipage}{0.3\textwidth}
			\begin{center}
				Ji Oon Lee\\
				\footnotesize 
				{Korea Advanced Institute of Science and Technology}\\
				{\it jioon.lee@kaist.edu}
			\end{center}
		\end{minipage} 
		\begin{minipage}{0.3\textwidth}
			\begin{center}
				Yiting Li  \\
				\footnotesize 
				{Korea Advanced Institute of Science and Technology}\\
				{\it yitingli@kaist.ac.kr}
			\end{center}
		\end{minipage} 
	\end{center}

	\begin{abstract}
 Consider the sample covariance matrix
 \begin{align} \label{eq87www}
 	\Sigma^{1/2}XX^T\Sigma^{1/2}
 \end{align}
 where $X$ is an $M\times N$ random matrix with independent entries and $\Sigma$ is an $M\times M$ diagonal matrix.   It is known that if $\Sigma$ is deterministic, then the fluctuation of
 $$\sum_if(\lambda_i)$$
 converges in distribution to a Gaussian distribution. Here $\{\lambda_i\}$ are eigenvalues of \eqref{eq87www} and $f$ is a good enough test function. In this paper we consider the case that $\Sigma$ is random and show that the fluctuation of 
 $$\frac{1}{\sqrt N}\sum_if(\lambda_i)$$
 converges in distribution to a Gaussian distribution. This phenomenon implies that the randomness of $\Sigma$ decreases the correlation among $\{\lambda_i\}$.
	\end{abstract}

\tableofcontents

\section{Background}

Consider the sample covariance matrix 
\begin{align}\label{eqn30}
\Sigma^{1/2}XX^T\Sigma^{1/2}
\end{align} 
where $X$ is an $M\times N$ random matrix with iid centered entries and the population $\Sigma $ is an $M\times M$ diagonal matrix with nonnegative entries. Assume that the empirical distribution of $\Sigma$ converges to a deterministic probability measure $\sigma$ and also assume that
$$\frac{M}{N}\to\gamma_0\in(0,\infty).$$ 

For the case $\Sigma=I$,    Marchenko and Pastur   \cite{Marchenko+Pastur} proved that the empirical distribution of \eqref{eqn30} converges to a deterministic probability measure with parameter $\gamma_0$. This measure is called the Marchenko-Pastur distribution and denoted by $\mu_{MP,\gamma_0}$. For the case $\Sigma\ne I$, it is known that empirical distribution of \eqref{eqn30} converges to the so called multiplicative free convolution of $\sigma$ and $\mu_{MP,\gamma_0}$, denoted by $\sigma\boxtimes\mu_{MP,\gamma_0}$. See \cite{Bai+Silverstein_spectral_analysis,Voiculescu,Voiculescu+Dykema+Nica}.

To know how fast the empirical distribution of \eqref{eqn30} converges to $\sigma\boxtimes\mu_{MP,\gamma_0}$, people study the asymptotic behavior of the fluctuation of the linear statistics:
\begin{align}\label{eqn31} 
\sum_i f(\lambda_i)-M\int f(t)d(\sigma\boxtimes\mu_{MP,\gamma_0})(t)
\end{align}
where $\{\lambda_i\}$ are eigenvalues of \eqref{eqn30} and $f$ is a test function.  The central limit theorem (CLT) of \eqref{eqn31} was first proved by Johansson \cite{Johansson} for Wishart matrices where $\Sigma=I$ and $X$ have Gaussian entries. He proved that \eqref{eqn31} converges to a Gaussian distribution. Note that this is different from the classic CLT for random variables which has the coefficient $\frac{1}{\sqrt N}$. This phenomenon  showed that the eigenvalues $\{\lambda_i\}$ have very strong correlation. Bai and Silverstein \cite{Bai+Silverstein} proved the CLT of \eqref{eqn31} for general $X$ and $\Sigma$  with a condition on the fourth moment of $X_{ij}$. This condition was later removed by the work of Pan and Zhou \cite{Pan+Zhou}.
In a series of work \cite{Bai+Wang+Zhou,lytova+pastur,Najim+Yao,M.Shcherbina}  the regularity condition of the test function was weakened. In \cite{Liu+Hu+Bai+Song} the CLT was proved in the case when the entries of $\Sigma$ do not have a uniform bound. In \cite{Li+Schnelli+Xu} the mesoscopic CLT was obtained.

All the work cited above assume that $\Sigma$ is a deterministic matrix. Kwak, the first author and Park \cite{Kwak+Lee+Park} considered the largest eigenvalue of \eqref{eqn30} for both deterministic and random $\Sigma$ and proved that the fluctuation of the largest eigenvalue converges to a deterministic probability measure.

In this paper, we consider the sample covariance matrix with random population and prove the CLT of \eqref{eqn31}. In fact, for a good enough test function $f$ we prove that the rescaled fluctuation
\begin{align}\label{eqn32}
	\frac{1}{\sqrt N}\Big(\sum_if(\lambda_i)-M\int f(t)d(\sigma\boxtimes\mu_{MP,\gamma_0})(t)\Big)
\end{align}
converges in distribution to a centered Gaussian distribution. Different from the case of deterministic $\Sigma$, we have the coefficient $\frac{1}{\sqrt N}$ in \eqref{eqn32}. This implies that the randomness of $\Sigma$ decreases the correlation among the eigenvalues. 

The similar phenomenon has been observed for the deformed Wigner matrix $W+V$ where $W$ is a Wigner matrix and $V$ is a  diagonal matrix. Suppose $\{\mu_i\}$ are eigenvalues of $W+V$. Ji and the first author \cite{Ji+Lee} proved that if $V$ is deterministic, then the fluctuation of
\begin{align}\label{eqn33}
\sum_i f(\mu_i)
\end{align}
converges to a Gaussian distribution and on the other hand, if $V$ is random, then the fluctuation of \eqref{eqn33} multiplied by $\frac{1}{\sqrt N}$ converges to a Gaussian distribution.

\section{Model and main results}

Define
$$\C_+=\{x+\i y|x\in\R,y>0\}.$$
For a probability measure $\pi$, define its Stieltjes transform by
$$m_\pi(z)=\int \frac{d\pi(t)}{t-z},\quad\forall z\not\in\text{supp}(\pi).$$
If $\pi$ has density $\rho_\pi(t)$ on an open interval $I$ and $\rho_\pi(t)$ is continuous on $I$, then by the Sokhotski–Plemelj theorem, 
	$$\rho_\pi(x)=\frac{1}{\pi}\lim\limits_{y\to0+} \Im m_\pi(x+\i y)\quad\forall x\in I.$$ 

\begin{lemma}\label{lemma:free_convolution}
	Let $\pi$ be a compactly supported probability measure on $\R$, and let $r>0$.
\begin{itemize}
	\item 
	For each $z\in\C_+$ there is a unique $m=m(z)\in\C_+$ satisfying
\begin{align}\label{self_consistent_eq}
\frac{1}{m}=-z+r\int \frac{t}{1+m t}d\pi(t).
\end{align}
	Moreover, $m(z)$ is the Stieltjes transform of a probability measure with compact
	support  in $[0,\infty)$. This probability measure is the multiplicative free convolution of
	$\pi$ and the Marchenko–Pastur law with dimensional ratio $r$:
	$$\pi \boxtimes\mu_{MP,r}$$
	where $\mu_{MP,r}$ denotes the Marchenko–Pastur law with dimensional ratio $r$.
	\item
	There is a continuous nonnegative function $w(x)$ defined on $(0,\infty)$ such that
\begin{align}\label{eqn1}
\pi\boxtimes\mu_{MP,r}=(1-r)^+\delta_0+w(x)dx.
\end{align}
	Moreover, $w(x)$ is analytic wherever it is positive.
\end{itemize}	
\end{lemma}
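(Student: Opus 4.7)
The plan is to handle the two bullets in order: first establish existence and uniqueness of the solution to \eqref{self_consistent_eq} and identify the resulting measure, then prove the decomposition and regularity of the density.

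For the first bullet I would begin with uniqueness. If $m_1,m_2\in\C_+$ both solve \eqref{self_consistent_eq} at the same $z$, subtraction and factoring give
$$\frac{1}{m_1 m_2}=r\int\frac{t^2}{(1+m_1 t)(1+m_2 t)}\,d\pi(t)$$
whenever $m_1\neq m_2$. On the other hand, taking $\Im$ of \eqref{self_consistent_eq} and using $\Im z>0$, $\Im m_i>0$ yields $1/|m_i|^2>r\int t^2/|1+m_i t|^2\,d\pi(t)$; multiplying these two bounds and comparing with the preceding identity via Cauchy--Schwarz produces a contradiction. For existence I would define $T_z(m):=\bigl(-z+r\int t/(1+mt)\,d\pi(t)\bigr)^{-1}$, verify via the same imaginary-part computation that $T_z:\C_+\to\C_+$, and observe that for $\Im z$ large enough $T_z$ is a strict contraction in the hyperbolic metric, producing a fixed point. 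Standard analytic continuation then extends $m(z)$ to all of $\C_+$ as a single-valued analytic function, with no branching possible because of the uniqueness already established.

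To show that $m(z)$ is the Stieltjes transform of a probability measure $\mu$, I would derive from the equation the asymptotic $m(\i y)=\i/y+O(y^{-2})$ as $y\to\infty$ and conclude via the Nevanlinna--Herglotz representation. To place $\text{supp}(\mu)\subset[0,\infty)$, I would verify that \eqref{self_consistent_eq} admits a real-valued solution for $z<0$, extending $m$ analytically across $(-\infty,0)$; a crude $|z|$-bound then confines the support to a compact subset of $[0,\infty)$. To identify $\mu$ as $\pi\boxtimes\mu_{MP,r}$, I would approximate $\pi$ weakly by finitely supported $\pi_n$; the Marchenko--Pastur/Silverstein theorem asserts that $\pi_n\boxtimes\mu_{MP,r}$ is the limiting spectral distribution of a sample covariance matrix with population spectrum $\pi_n$ and its Stieltjes transform solves the analogous equation, after which weak continuity of both sides and uniqueness complete the identification.

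For the second bullet, the density is $w(x)=\pi^{-1}\lim_{y\to 0+}\Im m(x+\i y)$ wherever the limit exists. At any $x_0>0$ with boundary value $m_0$ satisfying $\Im m_0>0$, the same computation used for uniqueness gives $\partial_m G(m_0,x_0)\neq 0$, where $G(m,z):=m^{-1}+z-r\int t/(1+mt)\,d\pi(t)$; the implicit function theorem then extends $m$ analytically across a neighborhood of $x_0$, making $w$ real analytic there, and continuity of $w$ on $(0,\infty)$ follows from continuity of the boundary values of $m$. Finally, to extract the atom $(1-r)^+\delta_0$, I would analyze $\lim_{z\to 0,\,z\in\C_+} z\,m(z)$ directly from \eqref{self_consistent_eq}: the equation forces this limit to equal $-(1-r)^+$, which by the Stieltjes inversion formula is precisely the mass of $\mu$ at $0$. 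The main obstacle I anticipate is identifying the abstractly constructed $\mu$ with $\pi\boxtimes\mu_{MP,r}$: linking the purely analytic solution of \eqref{self_consistent_eq} to the free multiplicative convolution requires either a random-matrix approximation argument or a direct $S$-transform computation, rather than being deducible from the self-consistent equation itself.
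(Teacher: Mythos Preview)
Your proposal is essentially correct as a self-contained argument, but it is a completely different route from what the paper actually does: the paper's entire proof consists of two citations, namely Lemma~2.2 of Knowles--Yin \cite{Knowles+Yin} for the first bullet and page~2271 of Hachem--Hardy--Najim \cite{Hachem+Hardy+Najim} for the second. There is no in-paper argument at all.

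What you sketch is, in effect, a reconstruction of those cited proofs (your uniqueness via the imaginary-part inequality plus Cauchy--Schwarz, existence via a contraction/fixed-point argument, Nevanlinna--Herglotz to recover a measure, implicit function theorem for real-analyticity where the density is positive, and the $z\,m(z)\to -(1-r)^+$ computation for the atom). This buys self-containment and transparency but costs several pages; the paper's approach buys brevity at the price of deferring all content to the literature. Two places in your outline would need a sentence more of care before they become proofs rather than plans: the passage ``standard analytic continuation then extends $m(z)$ to all of $\C_+$'' needs an a~priori bound preventing blow-up or escape from $\C_+$ along the continuation (e.g.\ via Earle--Hamilton applied to $T_z$ for every $z\in\C_+$, not just large $\Im z$), and the claim that $w$ is continuous on all of $(0,\infty)$---not merely where $w>0$---requires the Silverstein--Choi type argument that the boundary values of $m$ exist and are continuous even at points where $\Im m_0=0$, which does not follow from the implicit-function step alone.
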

\begin{proof}
The first result is Lemma 2.2 in \cite{Knowles+Yin}.	The second result can be found on Page 2271 of \cite{Hachem+Hardy+Najim}.
\end{proof}

\begin{definition}\label{def:model_of_X}
Suppose	$X=(X_{ij})$ is an $M\times N$ matrix where $M=M(N)$ and
	\begin{itemize}
		\item $\{X_{ij}\}$ are iid real-valued random variables such that $\E[X_{ij}]=0$, $\E[X_{ij}^2]=\frac{1}{N}$
		
		\item for any $p\in\{1,2,\ldots\}$ there exists $C_p>0$ such that 
	\begin{align}\label{eq8}
	\E[|\sqrt NX_{ij}|^p]\le C_p,\quad\forall i,j,N
	\end{align}
\item there exist constants $\gamma_0\in(0,\infty)\backslash\{1\}$ and $c_0>0$ such that
\begin{align}\label{eqn18}
	|\frac{M}{N}-\gamma_0|\le N^{-\frac{1}{2}-c_0}
\end{align}
 Without loss of generality we assume $c_0<0.01$.
	\end{itemize}
\end{definition}

It is easy to see that if $\gamma_0\in(0,1)$, then $M<N$ when $N$ is large enough; if $\gamma_0>1$, then $M>N$ when $N$ is large enough. By \eqref{eq8}, for any (small) $\epsilon>0$ and (large) $D>0$, if $N>N_0=N_0(\epsilon,D)$ then
\begin{align}\label{eq44}
\P(|X_{ij}|\le N^{\epsilon-\frac{1}{2}},\quad\forall i,j)>1-N^{-D}.
\end{align}

\begin{definition}	Suppose $l\in(0,1)$ is a constant and $\nu$ is a probability measure with $\text{supp}(\nu)=[l,1]$. Let
	$\Sigma=\text{diag}(\sigma_1,\ldots,\sigma_M)$ be an $M\times M$   diagonal matrix  such that the entries $\sigma_1$,\ldots,$\sigma_M$ are iid with distribution $\nu$. Moreover we assume that $\Sigma$ is independent of $X$.
\end{definition}

By linear algebra, the set of eigenvalues of $X^T\Sigma X$ differs from the set of eigenvalues of $\Sigma^{1/2}XX^T\Sigma^{1/2}$ by $|M-N|$ zeros.

Suppose $\mu_\Sigma$ is the empirical measure of $\Sigma$:
$$\mu_\Sigma=\frac{1}{M}\sum_{i=1}^M\delta_{\sigma_i}.$$
\begin{lemma}\label{lemma:convergence}
	\begin{itemize}
		\item Almost surely, $\mu_\Sigma$ converges weakly to $\nu$. In other words, if $f(t)$ is  bounded and continuous, then almost surely we have 
\begin{align}\label{eq2}
\frac{1}{M}\sum_{i=1}^Mf(\sigma_i)\to \int f(t)d\nu(t).
\end{align}
		\item Suppose $\lambda_1\ge\cdots\ge\lambda_N\ge0$ are the   eigenvalues of  $X^T\Sigma X$. Let $\lambda_i=0$ for $M\wedge N<i\le M\vee N$. Almost surely, the empirical measure 
		$$\frac{1}{N}\sum_{i=1}^N\delta_{\lambda_i}$$
		 converge weakly to $$\mu_{fc}:=\nu\boxtimes\mu_{MP,\gamma_0}.$$
	\end{itemize}
\end{lemma}
\begin{proof}
	The first conclusion is from the Glivenko–Cantelli Theorem. See Theorem 2.4.7 of \cite{Durrett}.
	
	  	The second conclusion can be induced from the first paragraph and the Equation (1.3) of \cite{Silverstein+Choi}.

\end{proof}

\begin{definition}
	\begin{itemize}
		\item Let $\rho_{fc}(x)$ be the density function of the absolute continuous part of $\mu_{fc}$. According to Lemma \ref{lemma:free_convolution}, $\rho_{fc}(x)$ is continuous and
		$$\mu_{fc}=(1-\gamma_0)^+\delta_0+\rho_{fc}(x)dx$$
\item Similarly, denote $$\hat\mu_{fc}:=\mu_\Sigma\boxtimes \mu_{MP,M/N}$$ and let $\hat\rho_{fc(t)}$ be the continuous function defined on $(0,\infty)$ such that
$$\hat\mu_{fc}=(1-\frac{M}{N})^+\delta_0+\hat\rho_{fc}(x)dx.$$
		\item  Let $[L_-,L_+]$ be the smallest interval containing the support of $\rho_{fc}$. Let  $[\hat L_-,\hat L_+]$ be the smallest interval containing the support of $\hat\rho_{fc}(x)$.
	\end{itemize}
\end{definition}
By Proposition 2.4 of \cite{Hachem+Hardy+Najim} and the assumption $\gamma_0\ne1$, we have   $L_->0$ and $\hat L_->0$ when $N$ is large enough.

\begin{theorem}\label{thm:main_thm}
Suppose 
\begin{align}\label{basic_assumption}
\lim\limits_{N\to\infty}\P\Big(|\hat L_--L_-|<\epsilon\quad\text{and}\quad |\hat L_+-L_+|<\epsilon\Big)\to1\quad\quad\forall \epsilon>0.
\end{align}	
	
	If $f(x)$ is a function analytic in a neighborhood of $[L_-,L_+]$, then as $N\to\infty$, 
	$$\frac{1}{\sqrt N}\sum_{i=1}^Nf(\lambda_i)-\sqrt N\int f(t)d\mu_{fc}(t)$$
	converges in distribution to a centered Gaussian distribution whose variance is
\begin{multline*}
-\frac{1}{4\pi^2}\oint_\Gamma\oint_\Gamma\int_l^1\frac{t^2m_{fc}'(\xi_1)m_{fc}'(\xi_2)f(\xi_1)f(\xi_2)}{(1+tm_{fc}(\xi_1))(1+tm_{fc}(\xi_2))}d\nu(t)d\xi_1d\xi_2
+\frac{1}{4\pi^2\gamma_0^2}\Bigg(\oint_\Gamma f(\xi)m_{fc}'(\xi)(\xi+\frac{1}{m_{fc}(\xi)})d\xi\Bigg)^2
\end{multline*} 
where $\Gamma$ is a counterclockwise oriented path enclosing $[L_-,L_+]$ but not enclosing 0 such that $f$ is analytic on a neighborhood of the domain bounded by $\Gamma$.
\end{theorem}
Since the eigenvalues of $X^T\Sigma X$ differ from the eigenvalues of $\Sigma^{1/2}XX^T\Sigma^{1/2}$ by $|M-N|$
 zeros, Theorem \ref{thm:main_thm} immediately imply the central limit theorem of the linear statistics for eigenvalues of $\Sigma^{1/2}XX^T\Sigma^{1/2}$.
 
The next proposition provides a criteria of \eqref{basic_assumption}.
  \begin{proposition} \label{prop_criteria}
 	If $m_{fc}(L_+):=\int\frac{d\mu_{fc}(t)}{t-L_+}\ne-1$ or $\gamma_0\ne \big(\int\big(\frac{t}{1-t}\big)^2d\nu(t)\big)^{-1}$, then
 \begin{align}\label{eq155}
 \P(|L_+-\hat L_+|<\epsilon)\to1\quad\forall\epsilon>0.
 \end{align}
 	If $m_{fc}(L_-):=\int\frac{d\mu_{fc}(t)}{t-L_-}\ne-l^{-1}$ or $\gamma_0\ne \big(\int\big(\frac{t}{t-l}\big)^2d\nu(t)\big)^{-1}$, then
 \begin{align}\label{eq156} 
 \P(|L_--\hat L_-|<\epsilon)\to1\quad\forall\epsilon>0.
 \end{align}
 Obviously, \eqref{eq155} and \eqref{eq156} yield \eqref{basic_assumption}.
 \end{proposition} 
Proposition \ref{prop_criteria} is proved in Section \ref{proof_of_examples}. Now we give two explicit examples satisfying \eqref{basic_assumption}.

\begin{example} Suppose
$\nu_1$ is a probability measure supported on $[l,1]$ such that its density function is bounded above and below:
\begin{align} \label{eq157}
	0<\inf_{x\in[l,1]}\frac{d\nu_1(x)}{dx}\le \sup_{x\in[l,1]}\frac{d\nu_1(x)}{dx}<\infty.
\end{align} 
\eqref{eq157} implies that $m_{fc}(L_+)=-\infty$ and $m_{fc}(L_-)=\infty$, so by Proposition \ref{prop_criteria}, \eqref{basic_assumption} must be true when $\nu=\nu_1$. 
\end{example}

\begin{example}\label{example:jacobi_measure}Suppose
 $\nu_2$ is the Jacobi measure on $[l,1]$:
$$\frac{d\nu_2}{dt}=\frac{1}{Z}\cdot d(x)\cdot(1-t)^b$$
where
\begin{itemize}
	\item   $b>1$ is a constant
	\item $d(x)\in C^1([l,1])$ and $d(x)>0$ on $[l,1]$
	\item $Z$ is the normalization constant: $Z=\int_l^1d(t)(1-t)^bdt$.
\end{itemize} 
If
\begin{align}\label{eq158}
\gamma_0\in(0,(\int\frac{t^2}{(1-t)^2}d\nu_2(t))^{-1})\quad\text{and}\quad \gamma_0\not\in\{1,(\int\frac{t^2}{(t-l)^2}d\nu_2(t))^{-1}\}
\end{align} 
then by Proposition \ref{prop_criteria}, \eqref{basic_assumption} is true. Moreover, according to Theorem 2.7 of \cite{Kwak+Lee+Park}, if $\nu=\nu_2$ and \eqref{eq158} holds, then $\rho_{fc}(t)\sim(1-t)^b$ when $t\uparrow L_+$. This means \eqref{basic_assumption} and our main theorem do not require $\rho_{fc}$ to decay like the square root near edges (which is a common requirement for sample covariance matrix).
\end{example}

\begin{definition}
\begin{itemize}
	\item 	Suppose  $\tau\in(0,C_0^{-1})$ where $C_0>1$ is a constant defined in Lemma \ref{auxiliary1}.  Let
\begin{align}\label{eq64}
	D_\tau=\Big\{z=E+\i\eta\in\C_+\Big|\tau\le E\le \tau^{-1}, N^{-1+\tau}\le\eta\le\tau^{-1}, \text{dist}\big(E,[   \hat L_-,\hat  L_+]\big)\ge\tau\Big\}
\end{align}
and
$$D_\tau'=\Big\{z=E+\i\eta\in\C_+\Big| E\in[\tau,\tau^{-1}], \tau\le\eta\le\tau^{-1}\Big\}$$

\item 
Let $m_N$, $m_{fc}$ and $\hat m_{fc}$ denote the Stieltjes transform of $\frac{1}{N}\sum_{i=1}^N\delta_{\lambda_i}$, $\mu_{fc}$ and $\hat\mu_{fc}$ respectively.  

\item For any $z\in\C\backslash\R$, let
\begin{align}\label{eq160}
	\Psi(z)=\frac{1}{N\Im z}+\frac{1}{\sqrt N}.
\end{align}

\end{itemize}
\end{definition}
Notice that $D_\tau$ is a random subset of $\C$ but is independent of $X$. For any   $z\in\C\backslash\R$ we have by \eqref{self_consistent_eq}:
$$m_N(z)=\frac{1}{N}\sum_{i=1}^N\frac{1}{\lambda_i-z}$$
\begin{align}\label{eqn25}
	\frac{1}{m_{fc}(z)}=-z+\gamma_0\int\frac{t}{1+tm_{fc}(z)}dt,\quad\quad\frac{1}{\hat m_{fc}(z)}=-z+\frac{1}{N}\sum_{i=1}^M\frac{\sigma_i}{1+\sigma_i\hat m_{fc}(z)}
\end{align}

An important tool we use to prove the main theorem is the following local law. 
\begin{proposition}\label{thm:strong_local_law}
	Suppose $\tau\in(0,C_0^{-1})$ where $C_0$ is the constant defined in Lemma \ref{auxiliary1}. For any (small) $\epsilon>0$ and (big) $D>0$, if $N>N_0=N_0(\epsilon,D,\tau)$, then 
	\begin{align} 
		\P(|m_N-\hat m_{fc}|\le N^{\epsilon}\Psi^2\quad\forall z\in D_\tau\cup D_\tau')>1-N^{-D}
	\end{align}
\end{proposition}
Proposition \ref{thm:strong_local_law} is proved in Section \ref{sec:strong_local_law}. It is similar to some results in \cite{Knowles+Yin}. See Theorem 3.14--3.16 and Lemma 5.6 of \cite{Knowles+Yin}.  But the results in \cite{Knowles+Yin} have the following two differences from Proposition \ref{thm:strong_local_law}. First, the population matrix in \cite{Knowles+Yin} is deterministic. Because of the randomness of $\Sigma$, we have to define the conditional expectation $\E_k$ (see \ref{conditional_expectation}) in a  way different from that in \cite{Knowles+Yin}. This is to ensure \eqref{eq18} and is also important in the the ``binary tree" argument, see \eqref{eq95} and \eqref{eq159}. 

Second,  for the spectral domain, three subdomains are considered in \cite{Knowles+Yin}: i) the the subdomain in which the real part of the points is far away from $\text{supp}(\hat\rho_{fc})$, ii) the subdomain in which the real part of the points  is near edges of $\text{supp}(\hat\rho_{fc})$, iii) the subdomain in which the real part of the points is in the bulk of $\text{supp}(\hat\rho_{fc})$.   For the second and third cases, some conditions on the regularity of the edges and bulks of $\text{supp}(\hat\rho_{fc})$ are assumed. In this paper, when the real part of the points in the spectral domain is near $\text{supp}(\hat\rho_{fc})$, we do not need the regularity of the edges or bulks (as we saw in Example \ref{example:jacobi_measure}), but we require the imaginary part of the points to be far away from zero (as we saw in the definition of $D_\tau'$).

 It turns out that the differences mentioned above are comparatively minor and we can basically follow the method in  \cite{Knowles+Yin} to prove Proposition \ref{thm:strong_local_law}. For the convenience of the readers we write down all details.

 Proposition \ref{thm:strong_local_law} yields the following corollary.
 \begin{corollary}\label{lemma:extreme_eigenvalues}
 	Suppose $\tau\in(0,C_0^{-1})$ where $C_0$ is the constant defined in Lemma \ref{auxiliary1}. For any (big) $D>0$ we have
 	$$\P\Big(\lambda_1,\ldots,\lambda_{M\wedge N}\text{ are all in }[\hat L_--\tau,\hat L_++\tau]\Big)>1-N^{-D}$$
 	when $N>N_0=N_0(\tau,D)$.
 \end{corollary}
Lemma 10.1 of \cite{Knowles+Yin} proved same conclusion as Corollary \ref{lemma:extreme_eigenvalues} for a model with slight difference. In particular, Lemma 10.1 of \cite{Knowles+Yin} assumes some regularity conditions for the edges and bulks of $\text{supp}(\hat\rho_{fc})$, as we mentioned above. Although our model does not necessarily satisfies the regularity conditions,  the method in the proof of Lemma 10.1 of \cite{Knowles+Yin}  still works. For the convenience of readers we write down the proof of Corollary \ref{lemma:extreme_eigenvalues} in Section \ref{sec:strong_local_law}. 
 
\section{Preliminaries}
\begin{lemma}\label{auxiliary1}
	There exist   $C_0>1$ such that  
	\begin{enumerate}
		\item if $N$ is large enough then
		$$\hat L_+<C_0\quad\quad\text{and}\quad\quad \hat L_->C_0^{-1}$$ 
		\item for any (big) $D>0$, if $N>N_0=N_0(D)$ then
		\begin{align}\label{eq42}
			\P(\lambda_i\in[C_0^{-1},C_0]\quad\forall i\in\{1,\ldots,M\wedge N\})>1-N^{-D}
		\end{align} 
	\end{enumerate}
\end{lemma}
 \begin{proof}
 See Appendix \ref{proofs}.
 \end{proof}

\begin{definition}
For any $z\in\C\backslash\R$, define the $(M+N)\times(M+N)$ matrices:
\begin{align}\label{eq57}
H(z):=	
\begin{pmatrix}
	-\Sigma^{-1}&X\\
X^T&-z
\end{pmatrix},\quad G(z):=H(z)^{-1}
\end{align}
as well as the $M\times M$ matrix:
$$G_M(z):=(-\Sigma^{-1}+z^{-1}XX^T)^{-1}=z\Sigma^{1/2}(\Sigma^{1/2}XX^T\Sigma^{1/2}-z)^{-1}\Sigma^{1/2}$$
and the $N\times N$ matrix:
$$G_N(z):=(X^T\Sigma X-z)^{-1}.$$
\end{definition}

\begin{definition}
	\begin{itemize}
		\item  Suppose $T$ is a subset of $\{1,\ldots,M+N\}$. For $z\in\C\backslash\R$, we use $H^{(T)}(z)$ to denote the $(M+N-\vert T\vert )\times(M+N-\vert T\vert )$ matrix:
		$$(H_{ij}(z))_{i,j\in\{1,\ldots,M+N\}\backslash T}$$
		\item 
		 Let
		$$ G^{(T)}(z)=(H^{(T)}(z))^{-1}$$ 
		\item 
		We also set
		$$\sum_i^{(T)}=\sum_{i:i\not\in T},\quad\sum_{i,j}^{(T)}=\sum_{i:i\not\in T}\sum_{j:j\not\in T}$$
	\end{itemize}
\end{definition}
\begin{remark}
	\begin{enumerate}
		\item When $T=\{i\}$, we use $(i)$ instead of $(\{i\})$ in the above definitions. Similarly, we write $(ij)$ instead of $(\{i,j\})$. We use $(Ti)$ to denote $(T\cup\{i\})$.
		\item In $H^{(T)}(z)$ and $G^{(T)}(z)$ we use the original values of matrix indices. For example, the indices for the rows and columns of $H^{(2)}(z)$ are $1,3,4,\ldots,M+N$.
		\item We often write $H(z)$ as $H$ for convenience. For $G(z)$, $G_M(z)$, $G_N(z)$, $H^{(T)}(z)$ and $G^{(T)}(z)$, we usually omit the variable $z$ too.
	\item It is easy to see that $H^{(T)}(z)$ and $G^{(T)}(z)$ are symmetric matrices.
	\end{enumerate}
\end{remark}

The following lemma is Lemma 4.4 of \cite{Knowles+Yin}.
\begin{lemma}
For any $z\in\C\backslash\R$ we have the following results.
\begin{enumerate}
	\item 
\begin{align}\label{eq26}
G(z)=\begin{pmatrix}
	\Sigma XG_NX^T\Sigma-\Sigma\quad\quad&\Sigma XG_N\\
	G_NX^T\Sigma&G_N
\end{pmatrix}
=\begin{pmatrix}
G_M&z^{-1}G_MX\\
z^{-1}X^TG_M\quad\quad&z^{-2}X^TG_MX-z^{-1}
\end{pmatrix}
\end{align}
\item If $M+1\le p\le M+N$ then
\begin{align}\label{eq15}
\frac{1}{G_{pp}}=-z-\sum_{i,j=1}^MX_{i(p-M)}X_{j(p-M)}G_{ij}^{(p)}.
\end{align}
If $p$ and $q$ are distinct elements in $\{M+1,\ldots,M+N\}$ then
$$G_{pq}=-G_{pp}\sum_{i=1}^MX_{i(p-M)}G_{iq}^{(p)}=-G_{qq}\sum_{i=1}^MG_{pi}^{(q)}X_{i(q-M)}=G_{pp}G_{qq}^{(p)}\sum_{i,j=1}^MX_{i(p-M)}G_{ij}^{(pq)}X_{j(q-M)}$$

\item If $i\in\{1,\ldots,M\}$ then
\begin{align}\label{eq13}
\frac{1}{G_{ii}}=-\frac{1}{\sigma_i}-\sum_{p,q=M+1}^{M+N}X_{i(p-M)}X_{i(q-M)}G_{pq}^{(i)}.
\end{align}
If $i$ and $j$ are distinct elements in $\{1,\ldots,M\}$ then
\begin{align}\label{eq20} 
G_{ij}=-G_{ii}\sum_{p=M+1}^{M+N}X_{i(p-M)}G_{pj}^{(i)}=-G_{jj}\sum_{p=M+1}^{M+N}G_{ip}^{(j)}X_{j(p-M)}=G_{ii}G_{jj}^{(i)}\sum_{p,q=M+1}^{M+N}X_{i(p-M)}X_{j(q-M)}G_{pq}^{(ij)}
\end{align}

\item If $i\in\{1,\ldots,M\}$ and $p\in\{M+1,\ldots,M+N\}$, then
\begin{multline}\label{eq7}
G_{ip}=-G_{pp}\sum_{j=1}^MG_{ij}^{(p)}X_{j(p-M)}=-G_{ii}\sum_{q=M+1}^{M+N}X_{i(q-M)}G_{qp}^{(i)}\\
=G_{ii}G_{pp}^{(i)}\Big(-X_{i(p-M)}+\sum_{j\in\{1,\ldots,M\}\backslash\{i\}\atop q\in\{M+1,\ldots,M+N\}\backslash\{p\}}X_{i(q-M)}X_{j(p-M)}G_{qj}^{(ip)}\Big)\\
=G_{pp}G_{ii}^{(p)}\Big(-X_{i(p-M)}+\sum_{j\in\{1,\ldots,M\}\backslash\{i\}\atop q\in\{M+1,\ldots,M+N\}\backslash\{p\}}X_{i(q-M)}X_{j(p-M)}G_{qj}^{(ip)}\Big)
\end{multline}
 
\item Suppose $r\in\{1,\ldots,M+N\}$ and $s,t$ are elements in $\{1,\ldots,M+N\}\backslash\{r\}$, then
\begin{align}\label{eq9}
G_{st}^{(r)}=G_{st}-\frac{G_{sr}G_{rt}}{G_{rr}}.
\end{align}

\end{enumerate}
\end{lemma}

\begin{definition} For any $k\in\{1,\ldots,M+N\}$ we define the conditional expectation 
	\begin{align}\label{conditional_expectation}
	\E_k[\cdot]=\E[\cdot|\text{ sigma algebra genarated by }\{\text{ entries of }H^{(k)}\}\cup\{\text{ entries of }\Sigma\}].
	\end{align}
	
	For any $z\in\C\backslash\R$ we define
	$$h(z)=-\frac{1}{z}+\frac{1}{N}\sum_{i=1}^M\frac{\sigma_i}{1+z\sigma_i}$$
	$$Z_p=Z_p(z)=(1-\E_p)\sum_{i,j=1}^MX_{i(p-M)}X_{j(p-M)}G_{ij}^{(p)},\quad\forall p\in\{M+1,\ldots,M+N\}$$
	$$Z_i=Z_i(z)=(1-\E_i)\sum_{p,q=M+1}^{M+N}X_{i(p-M)}X_{i(q-M)}G_{pq}^{(i)},\quad\forall i\in\{1,\ldots,M\}$$	
	$$A_i=A_i(z)=\frac{1}{N}\sum_{p=M+1}^{M+N}\frac{(G_{pi})^2}{G_{ii}},\quad\forall i\in\{1,\ldots,M\}$$
	$$A_p=A_p(z)=\frac{1}{N}\sum_{i=1}^{M}\frac{(G_{ip})^2}{G_{pp}},\quad\forall p\in\{M+1,\ldots,M+N\}$$
	$$B_N=B_N(z)=-\frac{1}{N}\sum_{i=1}^M\frac{\sigma_i^2\big[A_i(1+\sigma_im_N)+\sigma_iZ_i(Z_i-A_i)\big]}{(1+\sigma_i m_N)^2(1+\sigma_im_N+\sigma_iZ_i+\sigma_iA_i)}.$$
\end{definition}
\begin{remark}
Because of the randomness of $\Sigma$, the $\E_k[\cdot]$ and $Z_k$ are defined in a slightly different way as those in \cite{Knowles+Yin}. See (5.6), (5.7) and (5.8) of \cite{Knowles+Yin}.
\end{remark}
\begin{lemma}\label{lemma:preliminary}
	For any $z\in\C\backslash\R$, we have the following results.
	\begin{enumerate}
		\item If ${i,j}\subset\{1,\ldots,M\}$ and $\{p,q\}\subset\{M+1,\ldots,M+N\}$ then
	\begin{align}\label{eq21}
	|G_{ij}(z)|=|(G_M)_{ij}(z)|\le\frac{|z|}{|\Im z|},\quad \quad |G_{pq}(z)|=|(G_N)_{(p-M)(q-M)}(z)|\le\frac{1}{|\Im z|},
	\end{align}
\begin{align}\label{eq97}
|G_{ip}(z)|\le \frac{N}{|\Im z|}\max_{a,b}|X_{ab}|
\end{align}
Similarly, if $T\subset\{1,\ldots,M+N\}$, $\{i',j'\}\subset\{1,\ldots,M\}\backslash T$ and $\{p',q'\}\subset\{M+1,\ldots,M+N\}\backslash T$, then	 
\begin{align}\label{eq22} 
 	|G_{i'j'}^{(T)}(z)| \le\frac{|z|}{|\Im z|},\quad |G_{p'q'}^{(T)}(z)| \le\frac{1}{|\Im z|}\quad\text{and }|G_{i'p'}^{(T)}|\le \frac{N}{|\Im z|}\max_{a'\in\{1,\ldots,M\}\backslash T\atop b'\in\{M+1,\ldots,M+N\}\backslash T}|X_{a'b'}|.
\end{align}		
\item Suppose $T\subset\{1,\ldots,M+N\}$ and $p\in\{M+1,\ldots,M+N\}\backslash T$ then
\begin{align}\label{eq24}
\sum_{q\in\{M+1,\ldots,M+N\}\backslash T}|G_{pq}^{(T)}(z)|^2\le \frac{\Im G_{pp}^{(T)}}{\Im z}
\end{align}

  Suppose $S\subset\{1,\ldots,M+N\}$ and $i\in\{1,\ldots,M\}\backslash S$ then
\begin{align}\label{eq25}
	\sum_{j\in\{1,\ldots,M\}\backslash S}|G_{ij}^{(S)}(z)|^2\le2\frac{\|\tilde X\tilde X^T\|}{\Im z}\Im G^{(S)}_{ii}+2  \le 2\|\tilde X\tilde X^T\|\cdot\frac{|z|}{|\Im z|^2}+2
\end{align}
where  $\|\cdot\|$ denotes the operator norm and $\tilde X$ is a matrix constructed from $X$ by removing the rows with indices in $S\cap\{1,\ldots,M\}$ and removing the columns with indices in $\{p-M|p\in S\cap\{M+1,\ldots,M+N\}\}$.

\item  There exists a constant $C_w>0$ depending only on $\gamma_0$ such that the following holds. For any $k>0$ and (big) $D>0$, if $N>N_0=N_0(D,k)$, then 
\begin{align} \label{eq28}
	\P(\|XX^T\|<C_w)>1-N^{-D},\quad \P(\|\tilde X\tilde X^T\|<C_w)>1-N^{-D}
\end{align}
where $\tilde X$ is constructed from $X$ by removing no more than $k$ columns or rows of $X$.

		\item For any $i\in\{1,\ldots,M\} $ and $p\in\{M+1,\ldots,M+N\}$
		\begin{align} \label{eq12}
			\frac{1}{G_{ii}}=-\frac{1}{\sigma_i}-Z_i-m_N+A_i,\quad\quad \frac{1}{G_{pp}}=-z-Z_p-\frac{1}{N}\sum_{i=1}^MG_{ii}+A_p
		\end{align}
		\begin{align}\label{eq18}
			Z_i=-(1-\E_i)\frac{1}{G_{ii}},\quad\quad Z_p=-(1-\E_p)\frac{1}{G_{pp}}
		\end{align}
		\item 
		\begin{align}\label{eq10}
			m_N(z)\hat m_{fc}(z)\big(h(m_N(z))-z\big)=\alpha_1\big(m_N(z)-\hat m_{fc}(z)\big)^2+\alpha_2\big (m_N(z)-\hat m_{fc}(z)\big)
		\end{align}
		where
		\begin{align}\label{eq11}
			\alpha_1:=-\frac{1}{N}\sum_{i=1}^M\frac{\hat m_{fc}(z)\sigma_i^2}{(1+\sigma_i m_N(z))(1+\hat m_{fc}(z)\sigma_i)^2}\quad\quad\alpha_2:=1-\frac{1}{N}\sum_{i=1}^M\frac{\hat m_{fc}^2(z)\sigma_i^2}{(1+\hat m_{fc}(z)\sigma_i)^2}=\frac{\hat m_{fc}^2(z)}{\hat m_{fc}'(z)}.
		\end{align}
		
		\item 
		\begin{multline} \label{eq14}
	h(m_N)-h(\hat m_{fc})=		h(m_N)-z\\
	=B_N-\frac{1}{N}\sum_{p=M+1}^{M+N}A_p+\frac{1}{N}\sum_{p=M+1}^{M+N}\frac{(m_N-G_{pp})^2}{m_N^2G_{pp}} 
			+\frac{1}{N}\sum_{p=M+1}^{M+N}Z_p+\frac{1}{N}\sum_{i=1}^M\frac{\sigma_i^2}{(1+\sigma_im_N)^2}Z_i
		\end{multline}
		
	\end{enumerate}
\end{lemma}

\section{A weak local law on $D_\tau$}

\begin{definition}
For any  $\tau\in(0,1)$, let
\begin{align}\label{eq64}
D_\tau=\Big\{z=E+\i\eta\in\C_+\Big|\tau\le E\le \tau^{-1}, N^{-1+\tau}\le\eta\le\tau^{-1}, \text{dist}\big(E,[   \hat L_-,\hat  L_+]\big)\ge\tau\Big\}
\end{align}	 
\end{definition} 
Since $[\hat L_-,\hat L_+]$ depends on $\Sigma$,  $D_\tau$ is a random subset of $\C$, but it is independent of $X$.

\begin{theorem}\label{local_law_for_main_model}
  For any $\tau\in(0,1)$, (small) $\epsilon>0$ and (big) $D>0$, if $N>N_0=N_0(\epsilon,D,\tau)$ then
\begin{align}\label{eq58} 
\P\Big( |m_N(z)-\hat m_{fc}(z)|\le\frac{N^\epsilon}{N\Im z} \quad\forall z\in D_\tau\Big)>1-N^{-D}
\end{align}
\begin{align}\label{eq74}
\P\Big(|  G_{ij}(z)|\le N^\epsilon\Big(\frac{1}{N\Im z}+\sqrt{\frac{\Im \hat m_{fc}(z)}{N\Im z  }}\Big),\quad\forall i\ne j\in\{1,\ldots,M+N\}\text{ and }z\in D_\tau\Big)>1-N^{-D}
\end{align}
\begin{align}\label{eq72} 
\P\Big(|  G_{ii}(z)+\frac{\sigma_i}{1+\sigma_i\hat m_{fc}(z)}|\le N^\epsilon\Big(\frac{1}{N\Im z}+\sqrt{\frac{\Im \hat m_{fc}(z)}{N\Im z  }}\Big),\quad\forall i \in\{1,\ldots,M\}\text{ and }z\in D_\tau\Big)>1-N^{-D}
\end{align}
\begin{align}\label{eq85}
\P\Big(|  G_{pp}(z)-\hat m_{fc}(z)|\le N^\epsilon\Big(\frac{1}{N\Im z}+\sqrt{\frac{\Im \hat m_{fc}(z)}{N\Im z  }}\Big),\quad\forall p \in\{M+1,\ldots,M+N\}\text{ and }z\in D_\tau\Big)>1-N^{-D}
\end{align}
\end{theorem}
\begin{proof}
Consider the random variable: 
$$A(X,\Sigma,z):=\mathds1_{z\in D_\tau}\cdot|m_N(z)-\hat m_{fc}(z)|\frac{N\Im z}{N^\epsilon}.$$

Let $\sigma(\Sigma)$ be the sigma algebra generated by entries of $\Sigma$ and let $\omega$ denote a sample point in the probability space. Then
\begin{multline*}
\P\Big(\mathds1_{z\in D_\tau}\cdot|m_N(z)-\hat m_{fc}(z)|\le\frac{N^\epsilon}{N\Im z} \Big)=\P\Big(A(X,\Sigma,z)\le1\Big)=\E[\E[\mathds1_{A(X,\Sigma,z)\le1}|\sigma(\Sigma)]]\\
=\int \E[\mathds1_{A(X,\Sigma,z)\le1}|\sigma(\Sigma)](\omega)d\P(\omega)=\int \E[\mathds1_{A(X,\Sigma(\omega),z)\le1}]d\P(\omega)=\int \P(A(X,\Sigma(\omega),z)\le 1)d\P(\omega)
\end{multline*}
where we used the independence of $X$ and $\Sigma$ in the fourth identity.

 Note that $\Sigma(\omega)$   satisfies the conditions on the population matrix in Theorem 3.16 of \cite{Knowles+Yin}. 
Thus, according to Theorem 3.16 of \cite{Knowles+Yin}, if $N>N_0(\epsilon,D,\tau)$  then
$$\P(A(X,\Sigma(\omega),z)\le 1)\ge 1-N^{-D}$$
and  
$$\P\Big(\mathds1_{z\in D_\tau}\cdot|m_N(z)-\hat m_{fc}(z)|\le\frac{N^\epsilon}{N\Im z} \Big)\ge 1-N^{-D}$$
thus by   a classical ``lattice" argument we have
$$\P\Big(\mathds1_{z\in D_\tau}\cdot|m_N(z)-\hat m_{fc}(z)|\le\frac{N^\epsilon}{N\Im z},\quad \forall z\in\Big\{E+\i\eta\Big||E|\le \tau^{-1}, N^{-1+\tau}\le\eta\le\tau^{-1}\Big\}\Big)\ge 1-N^{-D}$$
which yields \eqref{eq58}. The other conclusions can be proved similarly using results from \cite{Knowles+Yin}.
\end{proof}

\section{A weak local law on $D_\tau'$}

\begin{lemma} 
	Suppose $p$ and $q$ are distinct numbers in $\{M+1,\ldots,M+N\}$. Suppose $i$ and $j$ are distinct numbers in  $\{1,\ldots,M\}$. For any (small) $\epsilon'>0$ and (big) $D'>0$, there exists $N_0=N_0(\epsilon',D')>0$ such that if $N>N_0$ and $z\in{\mathbb C }\backslash{\mathbb R }$ then
\begin{align}\label{eq23} 
{\mathbb P}\Big(\Big\vert\sum_{k,s=1}^MX_{k(p-M)}X_{s(q-M)}G_{ks}^{(pq)}\Big\vert \le N^{\epsilon'}\sqrt{\frac{1}{N^2}\sum_{k,s=1}^M\vert G_{ks}^{(pq)}\vert ^2}\Big)>1-N^{-D'}
\end{align}
	\begin{align} \label{eq31} 
		{\mathbb P}\Big(\Big\vert\sum_{s=1}^MX_{s(p-M)}G_{ks}^{(p)}\Big\vert \le N^{\epsilon'}\sqrt{\frac{1}{N}\sum_{s=1}^M\vert G_{ks}^{(p)}\vert ^2}\Big)>1-N^{-D'},\quad\forall k\in\{1,\ldots,M\}
	\end{align}
	\begin{align} \label{eq33} 
		{\mathbb P}\Big(\Big\vert\sum_{k,s\in[1,M]\atop k\ne s}X_{k(p-M)}X_{s(p-M)}G_{ks}^{(p)}\Big\vert \le N^{\epsilon'}\sqrt{\frac{1}{N^2}\sum_{k,s\in[1,M]\atop k\ne s}\vert G_{ks}^{(p)}\vert ^2}\Big)>1-N^{-D'}
	\end{align}
	\begin{align} \label{eq32}
		{\mathbb P}\Big(\Big\vert\sum_{k=1}^M((X_{k(p-M)})^2-\frac{1}{N})G_{kk}^{(p)}\Big\vert \le N^{\epsilon'}\sqrt{\frac{1}{N^2}\sum_{k=1}^M\vert G_{kk}^{(p)}\vert ^2}\Big)>1-N^{-D'}
	\end{align}
	\begin{align} \label{eq36}
	{\mathbb P}\Big(\Big\vert\sum_{k,s=M+1}^{M+N}X_{i(k-M)}X_{j(s-M)}G_{ks}^{(ij)}\Big\vert \le N^{\epsilon'}\sqrt{\frac{1}{N^2}\sum_{k,s=M+1}^{M+N}\vert G_{ks}^{(ij)}\vert ^2}\Big)>1-N^{-D'}
\end{align}
\begin{align} \label{eq40}
	{\mathbb P}\Big(\Big\vert\sum_{k,s\in[M+1,M+N]\atop k\ne s}X_{i(k-M)}X_{i(s-M)}G_{ks}^{(i)}\Big\vert \le N^{\epsilon'}\sqrt{\frac{1}{N^2}\sum_{k,s\in[M+1,M+N]\atop k\ne s}\vert G_{ks}^{(i)}\vert ^2}\Big)>1-N^{-D'}
\end{align}
\begin{align} \label{eq39} 
	{\mathbb P}\Big(\Big\vert\sum_{k=M+1}^{M+N}((X_{i(k-M)})^2-\frac{1}{N})G_{kk}^{(i)}\Big\vert \le N^{\epsilon'}\sqrt{\frac{1}{N^2}\sum_{k=M+1}^{M+N}\vert G_{kk}^{(i)}\vert ^2}\Big)>1-N^{-D'}
\end{align}
\end{lemma}
\begin{proof} 
	Suppose $\mathcal G_1$ is the sigma algebra generated by entries of $H^{(pq)}$. Then $X_{k(p-M)}$ and $X_{s(q-M)}$ are independent of $\mathcal G_1$. Let
	$$B_{ks}=\frac{G_{ks}^{(pq)}}{\sqrt{\sum_{k,s=1}^M\vert G_{ks}^{(pq)}\vert ^2}}.$$
	
	For any natural number $r$ and any sample point $\omega$ in the probability space, we have
	\begin{multline*}
		\E\Big[\Big\vert \sum_{k,s=1}^MX_{k(p-M)}X_{s(q-M)}B_{ks}\Big\vert ^{2r}\Big\vert \mathcal G_1\Big](\omega)
		=\E\Big[\Big(\sum_{k,s=1}^MX_{k(p-M)}X_{s(q-M)}B_{ks}\Big)^r\Big(\sum_{k,s=1}^MX_{k(p-M)}X_{s(q-M)}\bar B_{ks}\Big)^r\Big\vert \mathcal G_1\Big](\omega)\\
		=\E\Big[\Big(\sum_{k,s=1}^MX_{k(p-M)}X_{s(q-M)}B_{ks}(\omega)\Big)^r\Big(\sum_{k,s=1}^MX_{k(p-M)}X_{s(q-M)}\bar B_{ks}(\omega)\Big)^r\Big]=\E\Big[\Big\vert \sum_{k,s=1}^MX_{k(p-M)}X_{s(q-M)}B_{ks}(\omega)\Big\vert ^{2r}\Big]\\
		\le C\Big(\sum_{k,s=1}^M\Big\vert \frac{B_{ks}(\omega)}{N}\Big\vert ^2\Big)^r=CN^{-2r}
	\end{multline*}
	where $C>0$ depends only on $r$. We used  \eqref{eq8} and the Marcinkiewicz-Zygmund inequality (see, for example, Lemma 9 of \cite{Lee+Li}) in the inequality.
	So,
	\begin{multline*}
		{\mathbb P}\Big(\Big\vert \sum_{k,s=1}^MX_{k(p-M)}X_{s(q-M)}G_{ks}^{(pq)}\Big\vert >N^{\epsilon'}\sqrt{\frac{1}{N^2}\sum_{k,s=1}^M\vert G_{ks}^{(pq)}\vert ^2}\Big)={\mathbb P}\Big(\Big\vert N\sum_{k,s=1}^MX_{k(p-M)}X_{s(q-M)}B_{ks}\Big\vert >N^{\epsilon'}\Big)
		\\\le N^{-2r\epsilon'}\E\Big[\Big\vert N\sum_{k,s=1}^MX_{k(p-M)}X_{s(q-M)}B_{ks}\Big\vert ^{2r}\Big]\le CN^{-2r\epsilon'}.
	\end{multline*}
	Choose $r$ large enough such that $2r\epsilon'>D'$, then we complete the proof of the first conclusion. 
	
	Other conclusions can be proved in the same way by using different variants of the  Marcinkiewicz-Zygmund inequality. These  variants of the  Marcinkiewicz-Zygmund inequality can be found in Lemma 9 of \cite{Lee+Li}. Also see Lemma 13 of \cite{Lee+Li} for similar conclusions.
\end{proof}

In this section we provide the weak local law on $D_\tau'$. The weak local law on $D_\tau'$ is similar as Theorem 3.14 and Theorem 3.15 of \cite{Knowles+Yin}.

\begin{theorem}  \label{entrywise_local_law}
Suppose $\tau\in(0,C_0^{-1})$.	For any (small) $\epsilon'>0$ and (big) $D'>0$, if $N>N_0=N_0(\epsilon',D',\tau)$ and $z\in D_\tau'$ then
	\begin{enumerate}
		\item if $p$, $q$ are both in $\{M+1,\ldots,M+N\}$, then
		\begin{align} \label{eq29}
			\P\Big(\Big|G_{pq}(z)-\delta_{pq}m_N(z)\Big|\le  N^{\epsilon'-\frac{1}{2}}  \Big)>1-N^{-D'}
		\end{align}
		\item if $i\in\{1,\ldots,M\}$ and  $p\in\{M+1,\ldots,M+N\}$, then
		\begin{align} \label{eq35} 
			\P(|G_{ip}(z)|\le  N^{\epsilon'-\frac{1}{2}} )>1-N^{-D'}.
		\end{align} 
		\item if $i$ and $j$ are distinct numbers in $\{1,\ldots,M\}$, then
		\begin{align} \label{eq37}
			\P(|G_{ij}(z)|\le N^{\epsilon'-\frac{1}{2}} )\ge 1-N^{-D'}
		\end{align}
		\item if $i\in\{1,\ldots,M\}$, then
		\begin{align} \label{eq41}
			\P\Big(\Big|	G_{ii}(z)+\frac{\sigma_i}{1+\sigma_i m_N(z)}\Big|\le   N^{\epsilon'-\frac{1}{2}} \Big)>1-N^{-D'}.
		\end{align}
	\end{enumerate}
\end{theorem}
\begin{proof}
	\begin{enumerate}
		\item Suppose $p$ and $q$ are distinct numbers in $\{M+1,\ldots,M+N\}$.   By \eqref{eq20} and \eqref{eq22}  
		\begin{align*}
			|G_{pq}|=|G_{pp}G_{qq}^{(p)}\sum_{i,j=1}^MX_{i(p-M)}X_{j(q-M)}G_{ij}^{(pq)}|
			\le \frac{1}{|\Im z|^2}|\sum_{i,j=1}^MX_{i(p-M)}X_{j(q-M)}G_{ij}^{(pq)}|
		\end{align*}
Note  $|z|$ is bounded and $\Im z\ge\tau$  when $z\in D_\tau'$.	By \eqref{eq23}, \eqref{eq25} and \eqref{eq28} we obtain \eqref{eq29} in the case $p\ne q$.
	 
To consider the case $p=q$, we let $u$ and $v$ be two  indices in $\{M+1,\ldots,M+N\}$, not necessarily distinct.  By \eqref{eq15} and \eqref{eq9},
		\begin{multline*}
			\frac{1}{G_{uu}}=-z-\sum_{i,j=1}^MX_{i(u-M)}X_{j(u-M)}G_{ij}^{(u)}\\
			=-z-\frac{1}{N}\sum_{k=1}^MG^{(u)}_{kk}+\sum_{k=1}^MG_{kk}^{(u)}(\frac{1}{N}-(X_{k(u-M)})^2)-\sum_{i,j\in[1,M]\atop i\ne j}X_{i(u-M)}X_{j(u-M)}G_{ij}^{(u)}\\
			=-z-\frac{1}{N}\sum_{k=1}^MG_{kk}+\frac{1}{N}\sum_{k=1}^M\frac{(G_{ku})^2}{G_{uu}}+\sum_{k=1}^MG_{kk}^{(u)}(\frac{1}{N}-(X_{k(u-M)})^2)-\sum_{i,j\in[1,M]\atop i\ne j}X_{i(u-M)}X_{j(u-M)}G_{ij}^{(u)}
		\end{multline*}
		which implies:
		\begin{multline} \label{eq30}
			G_{uu}-G_{vv}=G_{uu}G_{vv}(\frac{1}{G_{vv}}-\frac{1}{G_{uu}})= \frac{G_{uu}}{N}\sum_{k=1}^M(G_{kv})^2-\frac{G_{vv}}{N}\sum_{k=1}^M(G_{ku})^2 \\
			+G_{uu}G_{vv}\sum_{k=1}^MG_{kk}^{(v)}(\frac{1}{N}-(X_{k(v-M)})^2)
			-G_{uu}G_{vv}\sum_{k=1}^MG_{kk}^{(u)}(\frac{1}{N}-(X_{k(u-M)})^2)\\
			-G_{uu}G_{vv}\sum_{i,j\in[1,M]\atop i\ne j}X_{i(v-M)}X_{j(v-M)}G_{ij}^{(v)}+G_{uu}G_{vv}\sum_{i,j\in[1,M]\atop i\ne j}X_{i(u-M)}X_{j(u-M)}G_{ij}^{(u)}
		\end{multline}
		Next, we estimate each term on the right hand side of \eqref{eq30}.
		
		{\bf Estimation of the first two terms in \eqref{eq30}.} By     \eqref{eq7} and   \eqref{eq22},
		$$\Big|\frac{G_{uu}}{N}\sum_{k=1}^M(G_{kv})^2\Big|\le\frac{1}{N|\Im z|}\sum_{k=1}^M\Big|G_{vv}\sum_{r=1}^MG_{rk}^{(v)}X_{r(v-M)}\Big|^2\le \frac{1}{N|\Im z|^3}\sum_{k=1}^M\Big|\sum_{r=1}^MG_{rk}^{(v)}X_{r(v-M)}\Big|^2$$
	Note that $\Im z\ge \tau$ since $z\in D_\tau'$.	By \eqref{eq31}, \eqref{eq25} and \eqref{eq28},   if $N>N_0(\epsilon',D',\tau)$ then
		$$\P\Big(\Big|\frac{G_{uu}}{N}\sum_{k=1}^M(G_{kv})^2\Big|\le N^{\epsilon'-1}\Big) \ge1-N^{-D'} $$
		Similarly we can obtain same bound for the second term in \eqref{eq30}.
		
		{\bf Estimation of the third term and the fourth term in \eqref{eq30}.} By \eqref{eq22}  and \eqref{eq32},   if $N>N_0(\epsilon',D', \tau)$ then
		$$\P\Big(\Big|\text{sum of the third and fourth terms in \eqref{eq30}}\Big|\le  N^{\epsilon'-\frac{1}{2}}\Big) \ge1-N^{-D'} $$
		
		{\bf Estimation of the last two terms in \eqref{eq30}.}  By \eqref{eq33}, \eqref{eq25},  \eqref{eq28}   and \eqref{eq22}, if $N>N_0(\epsilon',D',\tau)$
		$$\P\Big(\Big|\text{sum of the last two terms in \eqref{eq30}}\Big|\le  N^{\epsilon'-\frac{1}{2}}\Big) \ge1-N^{-D'} $$ 
		
		From the above estimations of the terms in \eqref{eq30}, if $N>N_0(\epsilon',D',\tau)$, then
		\begin{align} \label{eq34}
			\P(|G_{uu}-G_{vv}|\le  N^{\epsilon'-\frac{1}{2}})>1-N^{-D'}	
		\end{align}
		For any $p\in\{M+1,\ldots,M+N\}$,  by \eqref{eq26}:
		\begin{align*}
			|G_{pp}-m_N|=|G_{pp}-\frac{1}{N}\sum_{s=M+1}^{M+N}G_{ss}|\le\frac{1}{N}\sum_{s=M+1}^{M+N}|G_{pp}-G_{ss}|	
		\end{align*}
		which together with \eqref{eq34} prove \eqref{eq29} in the case $p= q$.

		\item   By \eqref{eq7}, 
		$$G_{ip}=-G_{pp}\sum_{k=1}^MX_{k(p-M)}G_{ki}^{(p)}$$
		 which together with \eqref{eq22}, \eqref{eq31}, \eqref{eq25} and \eqref{eq28} complete the proof of \eqref{eq35}.  

		\item By \eqref{eq20},
\begin{align}  
	G_{ij}=G_{ii}G_{jj}^{(i)}\sum_{p,q=M+1}^{M+N}X_{i(p-M)}X_{j(q-M)}G_{pq}^{(ij)}
\end{align}		
Then using \eqref{eq36}, \eqref{eq24} and \eqref{eq22}, we proved \eqref{eq37}.

		\item  By \eqref{eq13} and \eqref{eq9},
		\begin{multline} \label{eq38}
			G_{ii}\Big(\frac{1}{G_{ii}}+\frac{1}{\sigma_i}+m_N\Big)=	G_{ii}\Big(\frac{1}{N}\sum_{p=M+1}^{M+N}G_{pp}-\sum_{p,q=M+1}^{M+N}X_{i(p-M)}X_{i(q-M)}G_{pq}^{(i)}\Big)\\
			=	G_{ii}\Big(\frac{1}{N}\sum_{p=M+1}^{M+N}\Big[G_{pp}-G_{pp}^{(i)}\Big]+\sum_{p=M+1}^{M+N}(\frac{1}{N}-(X_{i(p-M)})^2)G_{pp}^{(i)}-\sum_{p,q\in[M+1,M+N]\atop p\ne q}X_{i(p-M)}X_{i(q-M)}G_{pq}^{(i)}\Big)\\
			=\frac{1}{N}\sum_{p=M+1}^{M+N}(G_{pi})^2+G_{ii}\sum_{p=M+1}^{M+N}(\frac{1}{N}-(X_{i(p-M)})^2)G_{pp}^{(i)}-G_{ii}\sum_{p,q\in[M+1,M+N]\atop p\ne q}X_{i(p-M)}X_{i(q-M)}G_{pq}^{(i)}
		\end{multline}
		For the terms on the right hand  side of \eqref{eq38}, using \eqref{eq35} to estimate the first term, using \eqref{eq39}  and \eqref{eq22} to estimate the second term, using \eqref{eq40}, \eqref{eq22} and \eqref{eq24} to estimate the third term,  we conclude that    if $N>N_0(\epsilon',D',\tau)$, then
		\begin{align}\label{eq43}
			\P\Big(\Big|	G_{ii}\Big(\frac{1}{G_{ii}}+\frac{1}{\sigma_i}+m_N\Big)\Big|\le  N^{\epsilon'-\frac{1}{2}} \Big)>1-N^{-D'}.
		\end{align}
		Notice $|1+\sigma_im_N|\ge l|\Im m_N|=\frac{l}{N}\sum_{i=1}^N\frac{\Im z}{|\lambda_i-z|^2}\ge\frac{l\tau}{\max_i(|z|+|\lambda_i|)^2}$, so
		$$\Big|	G_{ii} +\frac{\sigma_i}{1+\sigma_i m_N }\Big|=\Big|\frac{\sigma_i}{1+\sigma_i m_N }\cdot G_{ii}\Big(\frac{1}{G_{ii}}+\frac{1}{\sigma_i}+m_N\Big)\Big|\le \frac{\max_i(|z|+|\lambda_i|)^2}{l\tau }\Big|G_{ii}\Big(\frac{1}{G_{ii}}+\frac{1}{\sigma_i}+m_N\Big)\Big|.$$
This together with \eqref{eq43} and \eqref{eq42} complete the proof of \eqref{eq41}.
	\end{enumerate}	
\end{proof}

\begin{lemma}\label{lemma:some_estimation}
Suppose $\tau\in(0,C_0^{-1})$.	For any (small) $\epsilon'>0$ and (big) $D'>0$, if $N>N_0=N_0(\epsilon',D',\tau)$   then
\begin{enumerate}
\item $\P(|\frac{1}{G_{kk}}|\le N^{\epsilon'},\quad\forall z\in D_\tau')$ for all $k\in\{1,\ldots,M+N\}$;
\item $\P(|A_k|\le N^{\epsilon'-1},\quad\forall z\in D_\tau')>1-N^{-D'}$ for all $k\in\{1,\ldots,M+N\}$;
	\item $\P(|Z_k|\le N^{\epsilon'-\frac{1}{2}},\quad\forall z\in D_\tau')>1-N^{-D'}$ for all $k\in\{1,\ldots,M+N\}$;
	\item $\P(|B_N|\le N^{\epsilon'-1},\quad\forall z\in D_\tau')>1-N^{-D'}$.
\end{enumerate}
\end{lemma}
\begin{proof}
\begin{enumerate}
	\item Suppose $z\in D_\tau'$, $i\in\{1,\ldots,M\}$ and $p\in\{M+1,\ldots,M+N\}$. By \eqref{eq15} and \eqref{eq13}
\begin{align} \label{eq46}
	\frac{1}{G_{ii}}=-\sum_{p'=M+1}^{M+N}(X_{i(p'-M)})^2G_{p'p'}^{(i)}-\sum_{p',q'\in[M+1,M+N]\atop p'\ne q'}X_{i(p'-M)}X_{i(q'-M)}G_{p'q'}^{(i)}-\frac{1}{\sigma_i}
\end{align}
\begin{align}\label{eq45}
\frac{1}{G_{pp}}=-\sum_{i'=1}^M(X_{i'(p-M)})^2G_{i'i'}^{(p)}-\sum_{i',j'\in[1,M]\atop i'\ne j'}X_{i'(p-M)}X_{j'(p-M)}G_{i'j'}^{(p)}-z
\end{align}
Using \eqref{eq44} and \eqref{eq22} to estimate the first term of the RHS of \eqref{eq45} and the first term of the RHS of \eqref{eq46}, using \eqref{eq33} and \eqref{eq22} the second term of the RHS of \eqref{eq45}, using \eqref{eq40} and \eqref{eq22} the second term of the RHS of \eqref{eq46}, we have that if $N>N_0(\epsilon',D',\tau)$ then
\begin{align}\label{eq50}
\P(|\frac{1}{G_{kk}}|\le N^{\epsilon'}\quad\forall k\in\{1,\ldots,M+N\})>1-N^{-D'}.
\end{align}
Using a classic ``lattice" argument we obtain the first statement.
\item 
The first conclusion together with \eqref{eq29}, \eqref{eq35} and \eqref{eq37} and a ``lattice " argument prove the second conclusion.

\item  Suppose $z\in D_\tau'$, $i\in\{1,\ldots,M\}$ and $p\in\{M+1,\ldots,M+N\}$. By definition,

\begin{align}\label{eq47}
Z_p=\sum_{i'=1}^M\Big((X_{i'(p-M)})^2-\frac{1}{N}\Big)G_{i'i'}^{(p)}+\sum_{i',j'\in[1,M]\atop i'\ne j'}X_{i'(p-M)}X_{j'(p-M)}G_{i'j'}^{(p)},
\end{align}
\begin{align}\label{eq48}
	Z_i=\sum_{p'=M+1}^{M+N}\Big((X_{i(p'-M)})^2-\frac{1}{N}\Big)G_{p'p'}^{(i)}+\sum_{p',q'\in[M+1,M+N]\atop p'\ne q'}X_{i(p'-M)}X_{i(q'-M)}G_{p'q'}^{(i)}.
\end{align}
Using \eqref{eq32} and \eqref{eq22} to estimate the first term of the RHS of \eqref{eq47}, using \eqref{eq33}, \eqref{eq25} and \eqref{eq28} to estimate the second term of the RHS of \eqref{eq47}, using \eqref{eq39} and \eqref{eq22} to estimate the first term of the RHS of \eqref{eq48}, using \eqref{eq40}, \eqref{eq24} and \eqref{eq22} the second term of the RHS of \eqref{eq48}, we have for $N>N_0(\epsilon',D',\tau)$ and $k\in\{1,\ldots,M+N\}$:
 $$\P(|Z_k|\le N^{\epsilon'-\frac{1}{2}})>1-N^{-D'}$$
Using a   ``lattice" argument we obtain the third statement.

\item Suppose $z\in D_\tau'$.	Note $|m_N|\le \frac{1}{\Im z}\le\tau^{-1}$ and $\Im m_N=\frac{1}{N}\sum_{i=1}^N\frac{\Im z}{|\lambda_i-z|^2}\ge\frac{\tau}{\max_i(|z|+|\lambda_i|)^2}$. So we have from \eqref{eq42} and the first two conclusions of this lemma that if $N>N_0(\epsilon',D',\tau)$ and $1\le i\le M$ then
\begin{align}\label{eq49}
\P\Big(|1+\sigma_i m_N|\ge l|\Im m_N|\ge  \frac{l\tau}{(|z|+C_0)^2}\text{ and }|1+\sigma_i m_N+\sigma_iZ_i+\sigma_iA_i|\ge \frac{l\tau}{2(|z|+C_0)^2}\Big)>1-N^{-D'}
\end{align}
where $C_0>0$ is defined in Lemma \ref{auxiliary1}. By \eqref{eq49}, the definition of $B_N$ and the first two conclusions of this lemma, we have for $N>N_0(\epsilon',D',\tau)$: 
$$\P(|B_N|\le N^{\epsilon'-1})>1-N^{-D'}$$
Using a   ``lattice" argument we obtain the last statement.
\end{enumerate}
\end{proof}

\begin{theorem}\label{weak_local_law_on_D_tau'}
Suppose $\tau\in(0,C_0^{-1})$.	For any (small) $\epsilon'>0$ and (big) $D'>0$, if $N>N_0=N_0(\epsilon',D',\tau)$  then
$$\P\Big(|m_N-\hat m_{fc}|\le N^{\epsilon'-\frac{1}{2}}\quad\forall z\in D_\tau'\Big)>1-N^{-D'}$$

\end{theorem}

\begin{proof}
  By \eqref{eq14},  Lemma \ref{self_consistent_eq}, Theorem \ref{entrywise_local_law}  and \eqref{eq49}, if $N>N_0(\epsilon',D',\tau) $ then
\begin{align}\label{eq51}
\P \Big(|h(m_N)-z|\le N^{\epsilon'-\frac{1}{2}},\forall z\in D_\tau'\Big)>1-N^{-D'}.
\end{align}
According to \eqref{eq51} and \eqref{eq42}, if $N>N_0(\epsilon',D',\tau)$ then
\begin{align}\label{eq55} 
\P(E_N)>1-N^{-D'}
\end{align}
where
$$E_N=\big\{ |h(m_N)-z|\le N^{\epsilon'-\frac{1}{2}},\forall z\in D_\tau'\big\}\cap\{\lambda_i\in[0,C_0],\forall 1\le i\le N\}.$$

By \eqref{eq10} we have the quadratic equation:
$$\beta_1\cdot\big(m_N(z)-\hat m_{fc}(z)\big)^2+\big (m_N(z)-\hat m_{fc}(z)\big)=\beta_2 $$
where 
$$\beta_1=-\frac{1}{N}\sum_{i=1}^M\frac{\hat m_{fc}'\sigma_i^2}{\hat m_{fc}(1+\sigma_i m_N(z))(1+\hat m_{fc}(z)\sigma_i)^2}$$
$$\beta_2=\frac{m_N\hat m_{fc}'}{\hat m_{fc}}(h(m_N)-z)$$
Suppose $E_N$ holds and $z\in D_\tau'$. By Lemma \ref{auxiliary1}
 $$\Im \hat m_{fc}(z)=\Im z\int\frac{d\hat\mu_{fc}(t)}{|t-z|^2}\ge \frac{\Im z}{(|z|+\hat L_+)^2}\ge \frac{\tau}{(|z|+C_0)^2}$$
 $$\Im m_N(z)=\frac{\Im z}{N}\sum_{i=1}^N\frac{1}{|\lambda_i-z|^2}\ge\frac{\tau}{(|z|+C_0)^2}$$
and therefore
\begin{align}\label{eq54}
|\beta_1|\le C_1,\quad\quad |\beta_2|\le C_1N^{\epsilon'-\frac{1}{2}}
\end{align}
where $C_1>0$ is a constant determined by $\tau$.

According tot he above quadratic equation, we have
$$m_N-\hat m_{fc} =m_{(1)}:=\frac{-1+x_1}{2\beta_1}\quad\text{or}\quad m_N-\hat m_{fc} =m_{(2)}:=\frac{-1+x_2}{2\beta_1}$$
where $x_{1,2}$ are square roots of $1+4\beta_1\beta_2$ such that $x_1$ is close to $-1$ and $x_2=-x_1$ is close to $1$.  So if $N>N_0(\epsilon',D',\tau)$ then we have:
$$|1+x_1|\le |4\beta_1\beta_2|\le 4C_1^2N^{\epsilon'-\frac{1}{2}},\quad|1-x_2|\le|4\beta_1\beta_2|\le 4C_1^2N^{\epsilon'-\frac{1}{2}},$$
$$|m_{(1)}-m_{(2)}|=\Big|\frac{-1+x_1}{2\beta_1}-\frac{-1+x_2}{2\beta_1}\Big|=|\frac{x_2}{\beta_1}|\ge \frac{1}{2|\beta_1|}\ge\frac{1}{2C_1}$$
$$|m_{(2)}|=|\frac{-1+x_2}{2\beta_1}|=|\frac{2\beta_2}{x_2+1}|\le 4C_1N^{\epsilon'-\frac{1}{2}}\quad(\text{since }x_2^2=1+4\beta_1\beta_2)$$ $$|m_{(1)}|\ge|m_{(1)}-m_{(2)}|-|m_{(2)}|\ge \frac{1}{2C_1}-4C_1N^{\epsilon'-\frac{1}{2}}\ge\frac{1}{4C_1}$$
so by continuity, we have
\begin{align}\label{eq52}
m_N-\hat m_{fc} =m_{(1)}=\frac{-1+x_1}{2\beta_1}\quad \forall z\in D_\tau'
\end{align}
or
\begin{align}\label{eq53}
m_N-\hat m_{fc} =m_{(2)}=\frac{-1+x_2}{2\beta_1}\quad \forall z\in D_\tau'
\end{align}

By  definition we have $D_\tau\cap D_\tau'\ne\emptyset$, so Theorem \ref{local_law_for_main_model} implies that \eqref{eq52} cannot happen.   In summary, if $N>N_0(\epsilon',D',\tau)$ and $E_N$ holds, then \eqref{eq53} holds and
$$|m_N-\hat m_{fc}| =|m_{(2)}|\le 4C_1N^{\epsilon'-\frac{1}{2}},\quad\forall z\in D_\tau'.$$

Note that $\epsilon'$ can be arbitrarily small.  So we use \eqref{eq55} and complete the proof.
\end{proof}

\section{A strong local law on $D_\tau\cup D_\tau'$: proof of Proposition \ref{thm:strong_local_law}}\label{sec:strong_local_law}

According to Theorem \ref{local_law_for_main_model}, Theorem \ref{weak_local_law_on_D_tau'} and the fact that $\frac{\Im\hat m_{fc}}{\Im z}\sim1$ on $D_\tau\cup D_\tau'$ (which will be proved in Lemma \ref{lemma:basic}), for any (small) $\epsilon>0$ and  (big) $D>0$, if $N>N_0=N_0(\epsilon,D,\tau)$, then 
\begin{align}\label{eq83}
\P(|m_N-\hat m_{fc}|\le N^{\epsilon}\Psi(z)\quad\forall z\in D_\tau\cup D_\tau')>1-N^{-D}
\end{align}
where
\begin{align}\label{eq75} 
	\Psi(z)=\frac{1}{N\Im z}+\frac{1}{\sqrt N}
\end{align}
as we defined in \eqref{eq160}. In this section we prove that the $\Psi(z)$ in \eqref{eq83} can be improved to $\Psi^2(z)$.

\begin{lemma}\label{lemma:basic}
	Suppose $\tau\in(0,C_0^{-1})$.
There exists a constant $C_\tau>1$ such that if $z$ is in
\begin{align}\label{eq119}
	\{x+\i y|0\le y\le \tau^{-1}, x\in[\tau,\hat L_--\tau]\cup[\hat L_++\tau,\tau^{-1}]\}\cup D_\tau'
\end{align}
 then
\begin{enumerate}
	\item  
\begin{align}\label{eq60}
C_\tau^{-1}\le\frac{\Im \hat m_{fc}(z)}{\Im z}\le C_\tau
\end{align} 
\begin{align}\label{eq61}
 |\hat m_{fc}(z)|\le C_\tau
\end{align}
\item for any $i\in\{1,\ldots,M\}$ we have 
\begin{align}\label{eq70}
C_\tau^{-1}\le|1+\sigma_i\hat m_{fc}(z)|\le C_\tau 
\end{align}
\item for any (big) $D>0$, if $N>N_0=N_0(D,\tau)$ then
\begin{align}\label{eq71}
\P(|G_{ii}|\le C_\tau,\quad\forall i\in\{1,\ldots,M+N\}\text{ and }z\in D_\tau\cup D_\tau' )>1-N^{-D}
\end{align}
\end{enumerate}
\end{lemma}
\begin{proof}
\begin{enumerate}
	\item  
Notice that
\begin{align}\label{eq59}
\frac{\Im \hat m_{fc}(z)}{\Im z}=\int\frac{1}{|t-z|^2}d\hat \mu_{fc}(t)
\end{align}
Since  $z\in \eqref{eq119}$, we have $|t-z|\ge\tau$ for $t$ in the support of $\hat\mu_{fc}$, so by \eqref{eq59} we know the right inequality in \eqref{eq60} is correct. On the other hand, $|t-z|\le |t|+|z|$ is bounded by a constant for all $t$ in the support of $\hat\mu_{fc}$, so by \eqref{eq59} we know the left inequality in \eqref{eq60} is correct. \eqref{eq61} can be proved by noticing
$$|\hat m_{fc}(z)|\le \int\frac{1}{|z-t|}d\hat\mu_{fc}(t)$$
and the fact that $|t-z|\ge\tau$ for $t$ in the support of $\hat\mu_{fc}$.

\item

 According to Lemma 2.5 of \cite{Knowles+Yin}, 
$$x_1=\hat m_{fc}(\hat L_+):=\lim_{\eta\to0+}\hat m_{fc}(\hat L_++\i\eta)$$
where $x_1$ is the unique critical point of the function
\begin{align}\label{eq62}
r\mapsto -\frac{1}{r}+\frac{1}{N}\sum_{j=1}^M\frac{\sigma_j}{1+r\sigma_j}
\end{align}
on $(-(\max \sigma_j)^{-1},0)$. It's easy to see $\hat m_{fc}$ is negative and is increasing on $[\hat L_+,\infty)$, so  
\begin{align}\label{eq63}
|\hat m_{fc}(L_++\tau)-\hat m_{fc}(\hat L_+)|\ge\frac{\tau}{(\hat L_++\tau)^2}
\end{align}

which implies
$$|\hat m_{fc}(L_++\tau)|\le |x_1|-\frac{\tau}{(\hat L_++\tau)^2}<\frac{1}{\max\sigma_j}-\frac{\tau}{(\hat L_++\tau)^2}.$$
Since $\hat m_{fc}(\cdot)$ is increasing on $(\hat L_+,\infty)$, we have for all $x\ge \hat L_++\tau$:
\begin{align} \label{eq65}
1+\sigma_i\hat m_{fc}(x)\ge	1+\sigma_i\hat m_{fc}(L_++\tau)>1-\sigma_i\Big(\frac{1}{\max\sigma_j}-\frac{\tau}{(\hat L_++\tau)^2}\Big)>\frac{l\tau}{(\hat L_++\tau)^2}
\end{align}

If $\gamma_0>1$ then $\hat\mu_{fc}$ does not have an atom at 0, so $\hat m_{fc}>0$ on $(-\infty,\hat L_-)$ and we have
\begin{align} \label{eq66}
	|1+\sigma_i\hat m_{fc}(x)|>1,\quad\forall x\in(-\infty,\hat L_-)
\end{align}

Suppose $\gamma\in(0,1)$. According to Lemma 2.5 of \cite{Knowles+Yin}, 
$$x_1'=\hat m_{fc}(\hat L_-):=\lim_{\eta\to0+}\hat m_{fc}(\hat L_-+\i\eta)$$
where $x_1'$ is the unique critical point of \eqref{eq62} on  $(-\infty,-\frac{1}{\min\sigma_j})$. It's easy to see $\hat m_{fc}$ is  is increasing on $(0,\hat L_-]$, so 
$$|\hat m_{fc}(L_--\tau)-\hat m_{fc}(\hat L_-)|>\frac{\tau}{(\hat L_++\tau)^2}$$
which implies
$$\hat m_{fc}(L_--\tau )<x_1'-\frac{\tau}{(\hat L_++\tau)^2}<-\frac{1}{\min \sigma_j}-\frac{\tau}{(\hat L_++\tau)^2}.$$
Since $\hat m_{fc}(\cdot)$ is increasing on $(0,\hat L_-)$, we have for all $x\in(0,\hat L_-)$:
\begin{align} \label{eq67} 
1+\sigma_i\hat m_{fc}(x)\le	1+\sigma_i\hat m_{fc}(L_--\tau)<-\frac{l\tau}{(\hat L_++\tau)^2}.
\end{align}

Then, by \eqref{eq65}, \eqref{eq66} and \eqref{eq67}, if $x\in[\tau,\hat L_--\tau]\cup[\hat L_++\tau,\tau^{-1}]$, then
$$|1+\sigma_i\hat m_{fc}(x)|\ge C_1:= 1\wedge \frac{l\tau}{(\hat L_++\tau)^2}$$
and for $0<y<C_1\tau^2/2$ we have
$$|\hat m_{fc}(x+\i y)-\hat m_{fc}(x)|=\Big|\int\frac{\i y}{(t-x)(t-x-\i y)}d\hat \mu_{fc}(t)\Big|\le \frac{y}{\tau^2}$$
thus
\begin{align}\label{eq68}
|1+\sigma_i\hat m_{fc}(x+\i y)|\ge|1+\sigma_i\hat m_{fc}(x)|-\frac{y}{\tau^2}>C_1/2.
\end{align}

On the other hand, if $z\in \eqref{eq119}$ and $\Im z\ge C_1\tau^2/2$, then  
\begin{align}\label{eq69}
|1+\sigma_i\hat m_{fc}(z)|\ge l\cdot\Im \hat m_{fc}(z)= l\cdot\Im z\int\frac{d\hat\mu_{fc}(t)}{|t-z|^2}\ge  l\cdot\Im z\cdot\frac{1}{(|z|+\hat L_+)^2}\ge\frac{lC_1\tau^2/2}{(\sqrt2\tau^{-1}+C_0)^2}
\end{align}
where we used the facts $|z|\le\sqrt2\tau^{-1}$ and $\hat L_+\le C_0$ (see Lemma \ref{auxiliary1}) in the last inequality. \eqref{eq68} and \eqref{eq69} complete the proof of the first inequality in \eqref{eq70}. The second inequality in \eqref{eq70} is directly from \eqref{eq61}.
\item Notice that $D_\tau\cup D_\tau'$ is contained in \eqref{eq119}. So \eqref{eq71} comes from Theorem \ref{local_law_for_main_model}, Theorem \ref{entrywise_local_law}, Theorem \ref{weak_local_law_on_D_tau'} and the first two conclusions of this lemma.
\end{enumerate}
\end{proof}

\begin{lemma} \label{lemma:technical}
	Suppose $\tau\in(0,C_0^{-1})$.	For any (small) $\epsilon'>0$ and (big) $D'>0$, if $N>N_0=N_0(\epsilon',D',\tau)$   then
	\begin{enumerate}
		\item  
		\begin{align}\label{eq77} 
\P(|\frac{1}{G_{kk}}|\le N^{\epsilon'},\quad\forall z\in D_\tau\cup D_\tau')	\quad\text{for all }		k\in\{1,\ldots,M+N\},
		\end{align}
\begin{align}\label{eq84}
\P(|m_N|\ge N^{-\epsilon'},\quad\forall z\in D_\tau\cup D_\tau' )>1-N^{-D'}
\end{align}	
	\begin{align}\label{eq78}
	\P(|G_{sk}^{(r)}|\le N^{\epsilon'}\Psi(z), \quad\forall z\in D_\tau\cup D_\tau')>1-N^{-D'}\,\text{for all }k\ne s\in\{1,\ldots,M+N\}\backslash\{r\}
\end{align}
		\begin{align}\label{eq76}
		\P(|G_{ss}^{(r)}|\le 2C_\tau, \quad\forall z\in D_\tau\cup D_\tau')>1-N^{-D'}\quad\text{for all }s\in\{1,\ldots,M+N\}\backslash\{r\}
		\end{align}
where $C_\tau$ was defined in Lemma \ref{lemma:basic};
		\item $\P(|A_k|\le N^{\epsilon'}\Psi^2(z),\quad\forall z\in D_\tau\cup D_\tau')>1-N^{-D'}$ for all $k\in\{1,\ldots,M+N\}$;
		\item $\P(|Z_k|\le N^{\epsilon'}\Psi(z),\quad\forall z\in D_\tau\cup D_\tau')>1-N^{-D'}$ for all $k\in\{1,\ldots,M+N\}$;
		\item $\P(|B_N|\le N^{\epsilon'}\Psi^2(z),\quad\forall z\in D_\tau\cup D_\tau')>1-N^{-D'}$
	\end{enumerate}
where
$\Psi(z)=\frac{1}{N\Im z}+\frac{1}{\sqrt N}$ (as defined in \eqref{eq75}).
\end{lemma}
\begin{proof}
	
Thanks to Lemma \ref{lemma:some_estimation}, it suffices to prove each conclusion with $D_\tau\cup D_\tau'$ replaced by $D_\tau$.
\begin{enumerate}
	\item By \eqref{eq72}, \eqref{eq60}  and \eqref{eq61} we obtain \eqref{eq77} for $1\le k\le M$. Now suppose $M+1\le p\le M+N$ and $z$ is in
\begin{align}\label{eq120}
 \Big\{x+\i y\Big|\tau\le x\le \tau^{-1}, N^{-1+\tau}\le y\le\tau^{-1}\Big\}.
\end{align}
	  By \eqref{eq15} and \eqref{eq9} we have: 
\begin{align} 
	\frac{1}{G_{pp}}=-z-\sum_{i=1}^M(X_{i(p-M)})^2\Big[G_{ii}-\frac{(G_{ip})^2}{G_{pp}}\Big]-\sum_{i,j\in[1,M]\atop i\ne j}X_{i(p-M)}X_{j(p-M)}G_{ij}^{(p)} 
\end{align}
and
\begin{align}\label{eq73}
\frac{1}{G_{pp}}\Big[1-\sum_{i=1}^M(X_{i(p-M)}G_{ip})^2\Big]=-z-\sum_{i=1}^M(X_{i(p-M)})^2G_{ii}-\sum_{i,j\in[1,M]\atop i\ne j}X_{i(p-M)}X_{j(p-M)}G_{ij}^{(p)} .
\end{align}
By \eqref{eq9} and \eqref{eq33}, if $N>N_0(\epsilon',D',\tau)$, then we have with at least $1-N^{-D'}$ probability that:
\begin{multline*}
\Big|\sum_{i,j\in[1,M]\atop i\ne j}X_{i(p-M)}X_{j(p-M)}G_{ij}^{(p)}\Big|\le N^{\epsilon'}\sqrt{\frac{1}{N^2}\sum_{i,j\in[1,M]\atop i\ne j}|G_{ij}^{(p)}|^2}=N^{\epsilon'-1}\sqrt{\sum_{i,j\in[1,M]\atop i\ne j}\Big|G_{ij}-\frac{G_{ip}G_{jp}}{G_{pp}}\Big|^2}\\
\le\sqrt2 N^{\epsilon'-1}\Big(\sqrt{\sum_{i,j\in[1,M]\atop i\ne j}|G_{ij}|^2 }+\frac{1}{|G_{pp}|}\sqrt{\sum_{i,j\in[1,M]\atop i\ne j}|G_{ip}G_{jp}|^2 }\Big)
\end{multline*}
and by \eqref{eq73}:
\begin{multline*}
\mathds1_{z\in D_\tau}\cdot\frac{1}{|G_{pp}|}\Big[1-|\sum_{i=1}^M(X_{i(p-M)}G_{ip})^2|-\sqrt2 N^{\epsilon'-1}\sqrt{\sum_{i,j\in[1,M]\atop i\ne j}|G_{ip}G_{jp}|^2 }\Big]\\
\le \mathds1_{z\in D_\tau}\Bigg(|z|+|\sum_{i=1}^M(X_{i(p-M)})^2G_{ii}|+\sqrt2 N^{\epsilon'-1}\sqrt{\sum_{i,j\in[1,M]\atop i\ne j}|G_{ij}|^2 }\Bigg)
\end{multline*}
where 
\begin{itemize}
	\item $\mathds1_{z\in D_\tau}|\sum_{i=1}^M(X_{i(p-M)}G_{ip})^2|\le N^{\epsilon'-2\tau}$ by \eqref{eq44}, \eqref{eq74} and \eqref{eq73}
	\item  $\mathds1_{z\in D_\tau}\sum_{i,j\in[1,M]\atop i\ne j}|G_{ip}G_{jp}|^2 \le N^{2+\epsilon'-2\tau}$ and $\mathds1_{z\in D_\tau}\sum_{i,j\in[1,M]\atop i\ne j}|G_{ij}|^2 \le N^{2+\epsilon'-2\tau}$ by \eqref{eq74}
	\item  $\mathds1_{z\in D_\tau}|\sum_{i=1}^M(X_{i(p-M)})^2G_{ii}|\le N^{\epsilon'} $ by \eqref{eq44} and \eqref{eq71}.
\end{itemize}
Since $\epsilon'$ can be arbitrarily small, we have for $N>N_0(\epsilon',D',\tau)$:
$$\P(\mathds1_{z\in D_\tau}\frac{1}{|G_{pp}|}\le N^{\epsilon'})>1-N^{-D'}.$$
Using a ``lattice" argument on\eqref{eq120} we prove \eqref{eq77} for $k\in\{M+1,\ldots,M+N\}$, thus complete the proof of \eqref{eq77}.  \eqref{eq84} is directly from \eqref{eq77}, \eqref{eq58} and \eqref{eq85}. \eqref{eq76} comes directly from \eqref{eq71}, \eqref{eq74}, \eqref{eq77} and \eqref{eq9}. \eqref{eq78} is from \eqref{eq74}, \eqref{eq77} and \eqref{eq9}.

\item The second conclusion is directly from \eqref{eq74} and the first conclusion.
 
\item  Suppose  $i\in\{1,\ldots,M\}$, $p\in\{M+1,\ldots,M+N\}$ and $z$ is in \eqref{eq120}. By definition,

\begin{align} \label{eq79}
	Z_p=\sum_{i'=1}^M\Big((X_{i'(p-M)})^2-\frac{1}{N}\Big)G_{i'i'}^{(p)}+\sum_{i',j'\in[1,M]\atop i'\ne j'}X_{i'(p-M)}X_{j'(p-M)}G_{i'j'}^{(p)},
\end{align}
\begin{align} \label{eq80}
	Z_i=\sum_{p'=M+1}^{M+N}\Big((X_{i(p'-M)})^2-\frac{1}{N}\Big)G_{p'p'}^{(i)}+\sum_{p',q'\in[M+1,M+N]\atop p'\ne q'}X_{i(p'-M)}X_{i(q'-M)}G_{p'q'}^{(i)}.
\end{align}
Using \eqref{eq32} and \eqref{eq76} to estimate the first term of the RHS of \eqref{eq79}, using \eqref{eq33} and \eqref{eq78} to estimate the second term of the RHS of \eqref{eq79}, using \eqref{eq39} and \eqref{eq76} to estimate the first term of the RHS of \eqref{eq80}, using \eqref{eq40} and \eqref{eq78} to estimate the second term of the RHS of \eqref{eq80}, we have for $N>N_0(\epsilon',D',\tau)$ and $k\in\{1,\ldots,M+N\}$:
$$\P(\mathds1_{z\in D_\tau}\cdot |Z_k|\le N^{\epsilon'}\Psi(z))>1-N^{-D'}$$
Using a   ``lattice" argument we obtain the third statement.

\item According to \eqref{eq58}, \eqref{eq70} and the second and third conclusions of this lemma,   if $N>N_0(\epsilon',D',\tau)$, then we have with probability at least $1-N^{-D'}$ that:
\begin{align}\label{eq81} 
|1+\sigma_i m_N(z)|\ge \frac{1}{2C_\tau},\quad\forall z\in D_\tau,
\end{align}
\begin{align}\label{eq82} 
|1+\sigma_i m_N(z)+\sigma_i Z_i+\sigma_i A_i|\ge \frac{1}{2C_\tau},\quad\forall z\in D_\tau.
\end{align} 
\eqref{eq81}, \eqref{eq82} and the second and third conclusions of this lemma complete the proof of the fourth conclusion.
\end{enumerate}
\end{proof} 
 
\begin{lemma}\label{lemma:for_binary_tree}
	Suppose $\tau\in(0,C_0^{-1})$ and $p_0\in\{0,1,2,\ldots\}$. For any (small) $\epsilon>0$ and (big) $D>0$, if $N>N_0=N_0(\epsilon,D,\tau,p_0)$, then we have
\begin{align}\label{eq86}
\P(|G_{ij}^{(T)}|\le N^\epsilon\Psi(z),\quad \forall z\in D_\tau\cup D_\tau')>1-N^{-D}
\end{align}
\begin{align}  \label{eq89} 
	\P(|G_{ii}^{(T)}|\le (p_0+1)C_\tau,\quad\forall i\in\{1,\ldots,M+N\}\text{ and }z\in D_\tau\cup D_\tau' )>1-N^{-D}
\end{align}
\begin{align}\label{eq87}
\P\Big(\frac{1}{|G_{ii}^{(T)}| }\le N^\epsilon,\quad \forall z\in D_\tau\cup D_\tau'\Big) >1-N^{-D}
\end{align}
\begin{align}\label{eq88} 
\P\Big(\Big|(1-\E_i)\frac{1}{G_{ii}^{(T)} }\Big|\le N^\epsilon\Psi(z),\quad \forall z\in D_\tau\cup D_\tau'\Big) >1-N^{-D}
\end{align}
for any $i\ne j\in\{1,\ldots,M+N\}\backslash T$ and $T\subset\{1,\ldots,M+N\}$ satisfying $|T|\le p_0$.	
\end{lemma} 
\begin{proof}
We first prove \eqref{eq86}, \eqref{eq89} and \eqref{eq87}. Suppose $|T|=0$. In this case,   Theorem \ref{local_law_for_main_model} and Theorem \ref{entrywise_local_law} imply \eqref{eq86};   \eqref{eq77} gives \eqref{eq87}; \eqref{eq71} gives \eqref{eq89}. If $|T|=1$, then using \eqref{eq9} and the conclusion for $|T|=0$, we obtain \eqref{eq86}, \eqref{eq89} and \eqref{eq87}. Repeating this procedure for finite many times we proved \eqref{eq86}, \eqref{eq89} and \eqref{eq87} for all $|T|\le p_0$.

Now we prove \eqref{eq88}.  Suppose $i\in\{1,\ldots,M\}\backslash T$, $p\in[M+1,M+N]\backslash T$ and $z$ is in \eqref{eq120}. Using \eqref{eq15} and \eqref{eq13} (with $G$ replaced by $G^{(T)}$) and the definition of $\E_k$ we get:
\begin{align}\label{eq90}
(1-\E_i)\frac{1}{G_{ii}^{(T)}}=-\sum_{p',q'\in[M+1,M+N]\backslash T\atop p'\ne q'}X_{i(p'-M)}X_{i(q'-M)}G_{p'q'}^{(iT)}-\sum_{p'\in[M+1,M+N]\backslash T}\Big((X_{i(p'-M)})^2-\frac{1}{N}\Big)G_{p'p'}^{(iT)}
\end{align}
\begin{align}\label{eq91} 
(1-\E_p)\frac{1}{G_{pp}^{(T)}}=-\sum_{i',j'\in[1,M]\backslash T\atop i'\ne j'}X_{i'(p-M)}X_{j'(p-M)}G_{i'j'}^{(pT)}-\sum_{i'\in[1,M]\backslash T}\Big((X_{i'(p-M)})^2-\frac{1}{N}\Big)G_{i'i'}^{(pT)}.
\end{align}
Using \eqref{eq40} (with $G$ replaced by $G^{(T)}$) and \eqref{eq86} to estimate the first term on RHS of \eqref{eq90}, using \eqref{eq39} (with $G$ replaced by $G^{(T)}$) and \eqref{eq89} to estimate the second term on RHS of \eqref{eq90}, using \eqref{eq33} (with $G$ replaced by $G^{(T)}$) and \eqref{eq86} to estimate the first term on RHS of \eqref{eq91}, using \eqref{eq32} (with $G$ replaced by $G^{(T)}$) and \eqref{eq89} to estimate the second term on RHS of \eqref{eq91}, we have for $N>N_0(\epsilon,D,\tau,p_0)$:
$$\P\Big(\mathds1_{z\in D_\tau\cup D_\tau'}\Big|(1-\E_i)\frac{1}{G_{ii}^{(T)}}\Big|\le N^\epsilon\Psi(z)\Big)>1-N^{-D}$$
$$\P\Big(\mathds1_{z\in D_\tau\cup D_\tau'}\Big|(1-\E_p)\frac{1}{G_{pp}^{(T)}}\Big|\le N^\epsilon\Psi(z)\Big)>1-N^{-D}$$
Using the ``lattice" argument we complete the proof of \eqref{eq88}. 
\end{proof}
 
\begin{lemma}\label{lemma:binary_tree}	Suppose $\tau\in(0,C_0^{-1})$.
For any (small) $\epsilon>0$ and (big) $D>0$, if $N>N_0=N_0(\epsilon,D,\tau)$ then  we have:
$$\P(\frac{1}{N}\Big|\sum_{p=M+1}^{M+N}Z_p\Big|\le N^\epsilon\Psi^2,\quad\forall z\in D_\tau\cup D_\tau')>1-N^{-D}$$
$$\P(\frac{1}{N}\Big|\sum_{i=1}^M\frac{\sigma_i^2}{(1+\sigma_i\hat m_{fc})^2}Z_i\Big|\le N^\epsilon\Psi^2,\quad\forall z\in D_\tau\cup D_\tau')>1-N^{-D}$$
\end{lemma} 
	The proof of Lemma \ref{lemma:binary_tree} follows the standard ``binary tree" argument. This argument was first introduced to study the Wigner matrix.  See   \cite{Benaych-Georges+Knowles}.  Later it was used for sample covariance matrix with deterministic population. See \cite{Bloemendal+Erdos+Knowles+Yau+Yin} and \cite{Knowles+Yin}. In our model, although the population is random,  the ``binary tree" argument still works. The key point is to   define $\E_k $ as \eqref{conditional_expectation}. For the convenience of the readers we write down all details of the binary tree argument under the settings of this paper. See Appendix \ref{sec:binary_tree}. Its proof basically follows the steps in the proof of Proposition 6.1 in \cite{Benaych-Georges+Knowles}.

\begin{proof}[Proof of Proposition \ref{thm:strong_local_law}]
For any $z$ in $\C\backslash \R$, we have
$$\Big|\frac{1}{N}\sum_{i=1}^M\frac{\sigma_i^2}{(1+\sigma_im_N)^2}Z_i-\frac{1}{N}\sum_{i=1}^M\frac{\sigma_i^2}{(1+\sigma_i\hat m_{fc})^2}Z_i\Big|\le \frac{1}{N}\sum_{i=1}^M\Big(\frac{|m_N-\hat m_{fc}||Z_i|}{|1+\sigma_im_N||1+\sigma_i\hat m_{fc}|^2}+\frac{|m_N-\hat m_{fc}||Z_i|}{|1+\sigma_im_N|^2|1+\sigma_i\hat m_{fc}|}\Big)$$
thus by \eqref{eq14},  
\begin{multline}\label{eq105}
|	h(m_N)-z|\le 
|B_N|+\frac{1}{N}\sum_{p=M+1}^{M+N}|A_p|+\frac{1}{N}\sum_{p=M+1}^{M+N}\frac{|m_N-G_{pp}|^2}{|m_N^2G_{pp}|} 
+\Big|\frac{1}{N}\sum_{p=M+1}^{M+N}Z_p\Big|\\
+\Big|\frac{1}{N}\sum_{i=1}^M\frac{\sigma_i^2}{(1+\sigma_i\hat m_{fc})^2}Z_i\Big|+\frac{1}{N}\sum_{i=1}^M\Big(\frac{|m_N-\hat m_{fc}||Z_i|}{|1+\sigma_im_N||1+\sigma_i\hat m_{fc}|^2}+\frac{|m_N-\hat m_{fc}||Z_i|}{|1+\sigma_im_N|^2|1+\sigma_i\hat m_{fc}|}\Big)
\end{multline}
Now we estimate the RHS of \eqref{eq105}. Using Lemma \ref{lemma:technical} to control $|B_N|$, 	$|A_p|$ and $\frac{1}{|m_N^2G_{pp}|}$, using Theorem \ref{local_law_for_main_model} and \eqref{eq29} (and a ``lattice" argument) to control $|m_N-G_{pp}|^2$, using Lemma \ref{lemma:binary_tree} to control $\Big|\frac{1}{N}\sum_{p=M+1}^{M+N}Z_p\Big|$ and $
\Big|\frac{1}{N}\sum_{i=1}^M\frac{\sigma_i^2}{(1+\sigma_i\hat m_{fc})^2}Z_i\Big|$, using \eqref{eq70}, \eqref{eq58}, Theorem \ref{weak_local_law_on_D_tau'} and Lemma \ref{lemma:technical} to control the last term, we have for $N>N_0(\epsilon,D,\tau)$:
\begin{align}\label{eq106} 
\P(|	h(m_N)-z|\le N^\epsilon\Psi^2,\quad\forall 
z\in D_\tau\cup D_\tau')>1-N^{-D}
\end{align}

From \eqref{eq84}, Theorem \ref{local_law_for_main_model} and Theorem \ref{weak_local_law_on_D_tau'} we have for $N>N_0(\epsilon,D,\tau)$:
\begin{align}\label{eq107}
\P(\frac{1}{|\hat m_{fc}|}\le N^\epsilon,\quad\forall 
z\in D_\tau\cup D_\tau')>1-N^{-D}.
\end{align}
Since the distance between $D_\tau\cup D_\tau'$ and the support of $\hat\mu_{fc}$ is $\tau$, we have:
\begin{align}\label{eq109}
|\hat m_{fc}(z)|\le \tau^{-1}\quad\text{and}\quad|\hat m_{fc}'(z)|\le \tau^{-2}\quad\forall z\in D_\tau\cup D_\tau'
\end{align}
thus by \eqref{eq70}, \eqref{eq58} and Theorem \ref{weak_local_law_on_D_tau'}, if $N>N_0(\epsilon,D,\tau)$ then
\begin{align}\label{eq108} 
\P(|m_N(z)|\le \frac{2}{\tau}\text{ and }|1+\sigma_i m_N(z)|\ge\frac{1}{2C_\tau},\forall i\in\{1,\ldots,M\}, z\in D_\tau\cup D_\tau')>1-N^{-D}.
\end{align}
By \eqref{eq106}, \eqref{eq107} and \eqref{eq108}, if $N>N_0(\epsilon,D,\tau)$ then 
	\begin{align} \label{eq112}
		\P(E_N)>1-N^{-D}
	\end{align}
	where
\begin{align*}
E_N=\big\{ |h(m_N)-z|\le N^\epsilon\Psi^2, \frac{1}{|\hat m_{fc}|}\le N^\epsilon,|m_N|\le\frac{2}{\tau},|1+\sigma_i m_N(z)|\ge\frac{1}{2C_\tau}\quad\forall 
z\in D_\tau\cup D_\tau',i\in[1,M]\} 
\end{align*}
	
	By \eqref{eq10} we have the quadratic equation:
	$$\beta_1\cdot\big(m_N(z)-\hat m_{fc}(z)\big)^2+\big (m_N(z)-\hat m_{fc}(z)\big)=\beta_2 $$
	where 
	$$\beta_1=-\frac{1}{N}\sum_{i=1}^M\frac{\hat m_{fc}'\sigma_i^2}{\hat m_{fc}(1+\sigma_i m_N(z))(1+\hat m_{fc}(z)\sigma_i)^2}$$
	$$\beta_2=\frac{m_N\hat m_{fc}'}{\hat m_{fc}}(h(m_N)-z)$$
	Suppose $E_N$ holds and $z\in D_\tau\cup D_\tau'$. We have from \eqref{eq70}, \eqref{eq109} and the definition of $E_N$:
	\begin{align} 
		|\beta_1|\le C_1N^\epsilon,\quad\quad |\beta_2|\le C_1N^{2\epsilon}\Psi^2
	\end{align}
	where $C_1>0$ is a constant determined by $\tau$.

	According tot he above quadratic equation, we have
	$$m_N-\hat m_{fc}=m_{(1)}:=\frac{-1+x_1}{2\beta_1}\quad\text{or}\quad m_N-\hat m_{fc}=m_{(2)} :=\frac{-1+x_2}{2\beta_1}$$
	where $x_{1,2}$ are square roots of $1+4\beta_1\beta_2$ such that $x_1$ is close to $-1$ and $x_2=-x_1$ is close to $1$. So if $N>N_0(\epsilon,D,\tau)$ then
	$$|1+x_1|\le |4\beta_1\beta_2|\le 4C_1^2N^{3\epsilon}\Psi^2,\quad|1-x_2|\le|4\beta_1\beta_2|\le 4C_1^2N^{3\epsilon}\Psi^2$$
$$|m_{(1)}-m_{(2)}|=\Big|\frac{-1+x_1}{2\beta_1}-\frac{-1+x_2}{2\beta_1}\Big|=|\frac{x_2}{\beta_1}|\ge \frac{1}{2|\beta_1|}\ge\frac{1}{2C_1}N^{-\epsilon}$$
$$|m_{(2)}|=|\frac{-1+x_2}{2\beta_1}|=|\frac{2\beta_2}{x_2+1}|\le 4C_1N^{2\epsilon}\Psi^2\quad(\text{since }x_2^2=1+4\beta_1\beta_2)$$ 
\begin{align}\label{eq122}
|m_{(1)}|\ge|m_{(1)}-m_{(2)}|-|m_{(2)}|\ge \frac{1}{2C_1}N^{-\epsilon}-4C_1N^{2\epsilon}\Psi^2\ge\frac{1}{4C_1}N^{-\epsilon}
\end{align}	
	so by continuity, we have
	\begin{align} \label{eq110}
		m_N-\hat m_{fc} =m_{(1)}=\frac{-1+x_1}{2\beta_1}\quad \forall z\in D_\tau\cup D_\tau'
	\end{align}
	or
	\begin{align} \label{eq111}
		m_N-\hat m_{fc} =m_{(2)}=\frac{-1+x_2}{2\beta_1}\quad \forall z\in  D_\tau\cup D_\tau'
	\end{align}
However, Theorem \ref{local_law_for_main_model} and \eqref{eq122} imply that \eqref{eq110} cannot happen.   In summary, if $N>N_0(\epsilon,D,\tau)$ and $E_N$ holds, then \eqref{eq111} holds and
	$$|m_N-\hat m_{fc}| =|m_{(2)}|\le 4C_1N^{2\epsilon}\Psi^2,\quad\forall z\in D_\tau\cup D_\tau'.$$
Note that $\epsilon$ can be arbitrarily small, so we use \eqref{eq112} and complete the proof.
\end{proof}

\begin{proof}[Proof of Corollary \ref{lemma:extreme_eigenvalues}]
Lemma 10.1 of \cite{Knowles+Yin} proved same conclusion for a model with slight difference. But the method also works for our model. For the convenience of readers we write down the details.

Because of \eqref{eq42}, it suffices to show that if $N>N_0(\tau,D)$ then
\begin{align}\label{eq118}
	\P\Big(\text{none of }\lambda_1\ldots,\lambda_N\text{ is in }[\tau,\hat L_--\tau]\cup[\hat L_++\tau,\tau^{-1}]\Big)>1-N^{-D}.
\end{align}
 
 Without loss of generality assume $\tau<\frac{1}{100}$.  
Set $\epsilon=0.01$. If we have $\lambda_i=x_0\in [\tau,\hat L_--\tau]\cup[\hat L_++\tau,\tau^{-1}]$, then for $y_0:=N^{-\frac{1}{2}-\epsilon}$, we have $x_0+\i y_0\in D_\tau$ and:
\begin{align*}
&\Im m_N(x_0+\i y_0)=\frac{1}{N}\sum_{j=1}^N\frac{y_0}{|\lambda_j-x_0-\i y_0|^2}\ge \frac{1}{Ny_0}=N^{-\frac{1}{2}+\epsilon}\\
&\Im\hat m_{fc}(x_0+\i y_0)\ge C_\tau^{-1}y_0=C_\tau^{-1}N^{-\frac{1}{2}-\epsilon}\quad\text{(by \eqref{eq60})}
\end{align*}
thus for $N>N_0(\tau)$:
\begin{align}\label{eq117}
|m_N(x_0+\i y_0)-\hat m_{fc}(x_0+\i y_0)|\ge\frac{1}{2}N^{-\frac{1}{2}+\epsilon}>N^\epsilon(N^{-\frac{1}{2}+\epsilon}+\frac{1}{\sqrt N})^2=N^\epsilon\Psi^2(x_0+\i y_0).
\end{align}
However, according to Proposition \ref{thm:strong_local_law}, when $N>N_0(\tau,D)$, then with probability no less than $1-N^{-D}$, the inequality \eqref{eq117} does not hold. So \eqref{eq118} is true and the corollary is proved.
\end{proof}

\section{Proof of the main theorem}

In this section we prove Theorem \ref{thm:main_thm}. Suppose its assumptions hold. First of all we suppose
\begin{itemize}
	\item $d\in(0,\frac{L_-}{10}\wedge C_0^{-1})$ is a small constant such that $f$ is analytic on a neighborhood of the closed rectangle with vertices $L_++2d\pm 2d\i$ and $L_--2d\pm 2d\i$;
	\item $\Gamma$ is the boundary of the rectangle described above with counterclockwise orientation;
	\item $$\Omega_N=\big\{|\hat L_--L_-|<\frac{d}{2}\quad\text{and}\quad |\hat L_+-L_+|<\frac{d}{2}\big\}$$ $$\tilde\Omega_N=\big\{\lambda_1,\ldots,\lambda_{M\wedge N}\text{ are all in }[\hat L_--\frac{d}{2},\hat L_++\frac{d}{2}]\big\}\cap\Omega_N$$
\end{itemize}
 By the assumption \eqref{basic_assumption} and Corollary \ref{lemma:extreme_eigenvalues}, we have
 $$\P(\tilde\Omega_N)\to1.$$

Suppose $\tilde\Omega_N$ holds, then we have:
\begin{multline}\label{eqn17}
\frac{1}{\sqrt N}\sum_{i=1}^Nf(\lambda_i)-\sqrt N\int f(t)d\mu_{fc}(t)\\
=\frac{1}{\sqrt N}\Bigg[\frac{1}{2\pi\i}\sum_{i=1}^{M\wedge N}\oint_\Gamma\frac{f(\xi)d\xi}{\xi-\lambda_i}-\frac{N}{2\pi\i}\int\rho_{fc}(t)\oint_\Gamma\frac{f(\xi)d\xi}{\xi-t}dt+f(0)[(N-M)^+-N(1-\gamma_0)^+]\Bigg]\\
=\frac{1}{\sqrt N2\pi\i}\oint_\Gamma f(\xi)\Bigg[N\int\frac{\rho_{fc}(t)}{t-\xi}dt-\sum_{i=1}^{M\wedge N}\frac{1}{\lambda_i-\xi}\Bigg]d\xi+\frac{f(0)}{\sqrt N}[(N-M)^+-N(1-\gamma_0)^+]\\
=\frac{\sqrt N}{2\pi\i}\oint_\Gamma f(\xi)(m_{fc}(\xi)-m_N(\xi))d\xi+\frac{f(0)}{\sqrt N}[(N-M)^+-N(1-\gamma_0)^+]
\end{multline}
where we used  
$$\oint_\Gamma f(\xi) \int\frac{\rho_{fc}(t)}{t-\xi}dtd\xi=\oint_\Gamma f(\xi) \Big(\int\frac{d\mu_{fc}(t)}{t-\xi}+\frac{(1-\gamma_0)^+}{\xi}\Big)d\xi=\oint_\Gamma f(\xi) \int\frac{d\mu_{fc}(t)}{t-\xi}d\xi$$
and
$$\oint_\Gamma f(\xi)\sum_{i=1}^{M\wedge N}\frac{1}{\lambda_i-\xi}d\xi=\oint_\Gamma f(\xi) \Big(\sum_{i=1}^N\frac{1}{\lambda_i-\xi}+\frac{(N-M)^+}{\xi}\Big)d\xi=\oint_\Gamma f(\xi) \sum_{i=1}^N\frac{1}{\lambda_i-\xi}d\xi$$
in the last step of \eqref{eqn17}.

Note that $\P(\tilde\Omega_N)\to1$ and
$$\Big|\frac{f(0)}{\sqrt N}[(N-M)^+-N(1-\gamma_0)^+]\Big|\le f(0)\sqrt N|\gamma_0-\frac{M}{N}|\to0\quad\text{(by \eqref{eqn18})}.$$

So it suffices to prove that
$$\mathds1_{\tilde\Omega_N}\frac{\sqrt N}{2\pi\i}\oint_\Gamma f(\xi)(m_{fc}(\xi)-m_N(\xi))d\xi=\mathds1_{\tilde\Omega_N}\frac{\sqrt N}{2\pi\i}\oint_\Gamma f(\xi)(\hat m_{fc}(\xi)-m_N(\xi))d\xi+\mathds1_{\tilde\Omega_N}\frac{\sqrt N}{2\pi\i}\oint_\Gamma f(\xi)(m_{fc}(\xi)-\hat m_{fc}(\xi))d\xi$$
converges in distribution to the limiting distribution in the statement of Theorem \ref{thm:main_thm}.

When $\tilde\Omega_N$ holds, we have by definition:
$$\text{dist}(\xi,\text{supp}(\hat m_{fc}))\ge d\quad\text{and}\quad \text{dist}(\xi,\lambda_i)\ge d\quad\forall \xi\in \Gamma, i\in\{1,\ldots,N\}$$
and then
$$|m_N(\xi)|\le d^{-1},\quad |\hat m_{fc}(\xi)|\le d^{-1},\quad\forall \xi\in\Gamma.$$

Choose $\tau=0.01\wedge\frac{d}{2}$, so if $\tilde\Omega_N$ holds then $\Gamma\cap\{z|\Im z>N^{-1+\tau}\}\subset D_\tau\cup D_\tau'$.

 By Proposition \ref{thm:strong_local_law} with $\epsilon=0.1$ and $D=1$, if $N>N_0(d)$, then  with probability larger than $1-N^{-1}$ we have
\begin{multline*}
\Big|\mathds1_{\tilde\Omega_N}\frac{\sqrt N}{2\pi\i}\oint_\Gamma(\hat m_{fc}(\xi)-m_N(\xi))f(\xi)d\xi\Big|\\
\le\Big|\mathds1_{\tilde\Omega_N}\frac{\sqrt N}{2\pi\i}\int_{\Gamma\cap\{|\Im\xi|<N^{-1+\tau}\}}(\hat m_{fc}(\xi)-m_N(\xi))f(\xi)d\xi\Big|+\Big|\mathds1_{\tilde\Omega_N}\frac{\sqrt N}{2\pi\i}\int_{\Gamma\cap\{|\Im\xi|>N^{-1+\tau}\}}(\hat m_{fc}(\xi)-m_N(\xi))f(\xi)d\xi\Big| \\
\le \Big(\frac{\sqrt N}{2\pi}4N^{-1+\tau}\cdot\frac{2}{d}+\frac{\sqrt N}{2\pi}\int_{\Gamma\cap\{|\Im\xi|>N^{-1+\tau}\}}2N^{0.1}(\frac{1}{N^2|\Im z|^2}+\frac{1}{N})d\xi\Big)\sup_{\xi\in\Gamma}|f(\xi)|\\
\le\big( \frac{2}{d}N^{-0.5+\tau}+N^{-0.3}\big)\sup_{\xi\in\Gamma}|f(\xi)|
\end{multline*}
So $\mathds1_{\tilde\Omega_N}\frac{\sqrt N}{2\pi\i}\oint_\Gamma(\hat  m_{fc}(\xi)-m_N(\xi))f(\xi)d\xi$ converges in  distribution to 0 and now we only need to show that 
$$\mathds1_{\tilde\Omega_N}\frac{\sqrt N}{2\pi\i}\oint_\Gamma(m_{fc}(\xi)-\hat m_{fc}(\xi))f(\xi)d\xi$$
or 
$$\mathds1_{\Omega_N}\frac{\sqrt N}{2\pi\i}\oint_\Gamma(m_{fc}(\xi)-\hat m_{fc}(\xi))f(\xi)d\xi$$
converges in distribution to the limiting distribution in the statement of Theorem \ref{thm:main_thm}.

\begin{lemma}\label{lemma:bound}
There exists a constant $C_d>0$ depending on $d$ such that
\begin{itemize}
	\item $$|1+tm_{fc}(z)|>C_d \quad\forall z\in\Gamma, t\in[l,1]$$
	\item  if $\Omega_N$
	holds then
	$$|1+\sigma_i\hat m_{fc}(z)|>C_d\quad\forall z\in\Gamma, 1\le i\le M$$
\end{itemize}

\end{lemma}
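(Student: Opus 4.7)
The plan is to partition $\Gamma$ into (a) the part where $|\Im z|$ is bounded below, treated by the imaginary part of the Stieltjes transform; (b) the two real corners $L_--2d$ and $L_++2d$, treated by the branch analysis of the self-consistent equation; and (c) the rest of the vertical sides, treated by Lipschitz continuity gluing (a) to (b). The same scheme handles both bullets; for the second, the support control comes from $\Omega_N$. On $\Omega_N$ we have $\mathrm{supp}(\hat\mu_{fc})\subset\{0\}\cup[L_--d,L_++d]$, and unconditionally $\mathrm{supp}(\mu_{fc})\subset\{0\}\cup[L_-,L_+]$. Since $\Gamma$ is the boundary of a rectangle with real extent $[L_--2d,L_++2d]$ and imaginary extent $[-2d,2d]$, it lies at distance $\ge d$ from either support (using $\mathrm{dist}(\Gamma,0)\ge L_--2d\ge 8d$ via $d<L_-/10$). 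Hence $m_{fc}$ and $\hat m_{fc}$ are bounded by $1/d$ on $\Gamma$ and $(1/d^2)$-Lipschitz there, and a bound $D$ on $|s-z|$ for $s$ in either support and $z\in\Gamma$ exists depending only on $L_-,L_+,d$. For $z\in\Gamma$ with $\Im z\ne 0$, $\Im m_{fc}(z)=\int\frac{\Im z}{|s-z|^2}d\mu_{fc}(s)\ge |\Im z|/D^2$, and similarly for $\hat m_{fc}$; combined with $t,\sigma_i\ge l$ this yields $|1+tm_{fc}(z)|,|1+\sigma_i\hat m_{fc}(z)|\ge l|\Im z|/D^2$, which handles any $z$ with $|\Im z|\ge\eta_0$ for a threshold $\eta_0$ chosen later.

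At the real corners I use the branch structure of $z(\xi)=1/\xi+\gamma_0\int\frac{s}{1-\xi s}d\nu(s)$ and $\hat z(\xi)=1/\xi+\frac{1}{N}\sum_j\frac{\sigma_j}{1-\xi\sigma_j}$, with critical points $x_0,x_0',x_1,x_1'$ as in the proof of Lemma~\ref{lemma:distance_between_edges}. At $z=L_++2d$, $m_{fc}(z)=-x$ with $x\in(0,x_0)$; from $z'(x_0)=0$ and the derivative identity giving $|z'(\xi)|=|1-\gamma_0 h_0(\xi)|/\xi^2\le 1/\xi^2$ on $(0,x_0)$, integration gives $2d=z(x)-z(x_0)\le 1/x-1/x_0$, hence $x\le x_0/(1+2dx_0)$; combined with $x_0\ge 1/L_+$ (from $z(x_0)\ge 1/x_0$) this yields $1-tx\ge 1-x\ge 2d/(L_++2d)$. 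The parallel manipulation for $\hat m_{fc}$ with $\hat z(x_1)=\hat L_+<L_++d$, $x_1\ge 1/(L_++d)$, and $\sigma_{\max}x_1<1$ gives $1-\sigma_ix\ge 1-\sigma_{\max}x\ge d/(L_++2d)$ uniformly in $i$ and $\Sigma\in\Omega_N$. At $z=L_--2d$ with $\gamma_0>1$, the support avoids $0$ and the Stieltjes integral directly gives $m_{fc}(z),\hat m_{fc}(z)>0$, so both quantities exceed $1$. For $\gamma_0<1$, I use the lower branches $(x_0',\infty)$ and $(x_1',\infty)$ on which $z,\hat z$ are decreasing; since $x_0'>1/l$ and $x_1'>1/\min_j\sigma_j\ge 1$ (the last from $\sigma_j\le 1$), the bounds $|z'(\xi)|,|\hat z'(\xi)|\le 1/\xi^2$ convert the drops $2d$ and $\ge d$ into $x-x_0'\ge 2d(x_0')^2$ and $x-x_1'\ge d(x_1')^2\ge d$, giving $|1+tm_{fc}(L_--2d)|=tx-1\ge lx-1\ge 2dl$ and $|1+\sigma_i\hat m_{fc}(L_--2d)|=\sigma_ix-1\ge \min_j\sigma_j\cdot(x-1/\min_j\sigma_j)\ge dl$.

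To conclude, the $(1/d^2)$-Lipschitz estimate extends each corner bound $C_0$ to an $\eta_0$-neighborhood of the corner on the vertical sides via $\eta_0=C_0d^2/2$, producing $\ge C_0/2$; combined with the imaginary-part bound for $|\Im z|\ge\eta_0$, this yields a uniform constant $C_d$ depending only on $d,l,\gamma_0,L_-,L_+$. The main subtlety is the left corner with $\gamma_0<1$ for the second bullet: $x_1'$ is random and can be within $O(1/\sqrt N)$ of $1/\min_j\sigma_j$, so no uniform separation between $x_1'$ and $1/\min_j\sigma_j$ is available. This is bypassed by the weaker but deterministic inequality $x_1'\ge 1/\min_j\sigma_j\ge 1$, which together with $|\hat z'(\xi)|\le 1/\xi^2\le 1$ on $(x_1',\infty)$ converts the drop $\hat L_--(L_--2d)\ge d$ forced by $\Omega_N$ into the constant separation $x-x_1'\ge d$, already sufficient to push $\sigma_ix$ past $1$ by the uniform amount $dl$.
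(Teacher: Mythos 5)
Your proof is correct and follows essentially the same scheme as the paper's: a lower bound at the two real corners of $\Gamma$ obtained from the edge characterization and monotonicity (the paper bounds $\hat m_{fc}'\ge 1/(L_++2d)^2$ directly on $[L_++d,L_++2d]$ whereas you bound $|z'(\xi)|\le 1/\xi^2$ on the relevant branch and integrate — the same estimate seen through the inverse map), then $|1+tm|\ge t\,|\Im m|$ for $|\Im z|$ bounded below, and a continuity/Lipschitz gluing in between. Both arguments use the same case split on $\gamma_0\gtrless 1$ at the left corner and the same deterministic separations $x_1<1/\max_j\sigma_j$, $x_1'>1/\min_j\sigma_j$, so the details differ only in the exact form of the constants.
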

\begin{proof}
The second conclusion is induced from Lemma \ref{lemma:basic}. To prove the first conclusion, we cite the following result from Page 2271--2272 of \cite{Hachem+Hardy+Najim}:

\vspace{0.2cm}

For any interval $I\subset(-\infty,0)\cup(0,1)\cup(l^{-1},\infty)$, $-m_{fc}(x)$ is decreasing on $I$ and the function
$$r(x):=\frac{1}{x}+\gamma_0\int\frac{t}{1-tx}d\nu(t)$$
is the inverse of $-m_{fc}$ on $-m_{fc}(I)$, thus $r(x)$ is decreasing on $-m_{fc}(I)$.

\vspace{0.2cm}

(Note that the   Stieltjes transform in \cite{Hachem+Hardy+Najim} differs by a sign from the one in this paper.) 
\begin{itemize}
	\item To estimate $|1+ t m_{fc}(L_++2d)|$, we take $I=[L_++2d,\infty)$. From  the above result we see that $(0,-m_{fc}(L_++2d))$ is contained in the domain of $r$, so $|m_{fc}(L_++2d)|<1$ and $|1+t m_{fc}(L_++2d)|$ is larger than a constant which depends only on $d$. 
	\item If $\gamma_0>1$, then $\mu_{fc}$ does not have an atom at origin. So $m_{fc}(L_--2d)>0$ and $|1+t m_{fc}(L_--2d)|>1$.
	\item If $\gamma_0\in(0,1)$, we take $I=(0,L_--2d)$. Then $-m_{fc}(x)$ approaches $\infty$ when $x\to0+$. From the above result we see that $(-m_{fc}(L_--2d),\infty)$ is contained in the domain of $r$, so $m_{fc}(L_--2d)<-l^{-1}$ and $|1+t m_{fc}(L_--2d)|$ is larger than a constant which depends only on $d$.
\end{itemize}
	By continuity there exists $C_1>0$ and $y_0>0$ both determined by $d$ such that if $|y|<y_0$ then
$$|1+tm_{fc}(L_++2d+\i y)|>C_1\quad\text{and}\quad |1+tm_{fc}(L_--2d+\i y)|>C_1.$$
When $z\in\Gamma$ but $|\Im z|\ge y_0$, then 	
$$|1+t m_{fc}(z)|\ge l|\Im m_{fc}(z)|\ge  l y_0\int\frac{d\mu_{fc}(t)}{|z-t|^2}$$
which is bounded below since $|t-z|$ is bounded above uniformly. So the first conclusion is proved.	
\end{proof}

\begin{definition}
For $z\in\C\backslash\R$ and $1\le i\le M$ we define
$$g_i(z)=\frac{\sigma_i}{1+\sigma_im_{fc}(z)},\quad \hat g_i(z)=\frac{\sigma_i}{1+\sigma_i\hat m_{fc}(z)}$$
\end{definition}
 \begin{lemma} 
	Suppose $a_1>0$, $a_2>0$ are constants. Then
	$${\mathbb P}\Big(\Big\vert \frac{1}{N^{\frac{1}{2}+a_1}}\sum_{i=1}^N(g_i(\xi)-\E[g_i(\xi)])\Big\vert \le a_2,\quad\forall\xi\in\Gamma\Big)\to1\quad \text{as } N\to\infty$$
	$${\mathbb P}\Big(\Big\vert \frac{1}{N^{\frac{1}{2}+a_1}}\sum_{i=1}^N(g_i^2(\xi)-\E[g_i^2(\xi)])\Big\vert \le a_2,\quad\forall\xi\in\Gamma\Big)\to1\quad \text{as } N\to\infty$$
\end{lemma}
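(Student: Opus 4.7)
The plan is a standard concentration–plus–net argument, made possible by the key observation that, once $m_{fc}$ is fixed (it is deterministic), the random variables $g_1(\xi),\ldots,g_M(\xi)$ are independent and identically distributed for every fixed $\xi$, and (by the previous Lemma~\ref{lemma:bound}) uniformly bounded on $\Gamma$ by a constant $C_d^{-1}$. The same is true of $g_i^2(\xi)$. Note that the sum is effectively over $i\le M$ since $g_i$ is only defined for $i\le M$, and $|M/N-\gamma_0|\le N^{-1/2-c_0}$ so replacing $N$ by $M$ in the normalization is harmless.

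First, I would fix $\xi\in\Gamma$ and apply Hoeffding's inequality to the real and imaginary parts of the centered i.i.d.\ bounded family $\{g_i(\xi)-\E[g_i(\xi)]\}_{i=1}^M$. This yields, for any $t>0$,
\begin{equation*}
\P\Bigl(\Bigl|\sum_{i=1}^M(g_i(\xi)-\E[g_i(\xi)])\Bigr|>t\Bigr)\le 4\exp\bigl(-c_d\,t^2/M\bigr).
\end{equation*}
Choosing $t=\tfrac{1}{2}a_2 N^{1/2+a_1}$ gives a bound of order $\exp(-c'_d N^{2a_1})$, which is superpolynomially small in $N$.

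Second, I would upgrade to uniformity in $\xi$ by a standard mesh argument. Take a net $\mathcal{N}\subset\Gamma$ with $|\mathcal{N}|\le N^{K}$ and mesh $\le N^{-K}$ for some large $K$. A union bound over $\mathcal{N}$ keeps the exceptional probability at most $4 N^K\exp(-c'_d N^{2a_1})\to 0$. To interpolate between net points, observe that each $g_i$ is analytic on a fixed neighborhood of $\Gamma$ (since $m_{fc}$ is, and the denominator $1+\sigma_i m_{fc}$ is bounded away from $0$ uniformly in $i$ and $\xi\in\Gamma$ by Lemma~\ref{lemma:bound}) and bounded there by $C_d^{-1}$; Cauchy's formula then gives $|g_i'(\xi)|\le C'_d$ uniformly in $i$ and $\xi\in\Gamma$. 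Hence the function
\begin{equation*}
\xi\mapsto \frac{1}{N^{1/2+a_1}}\sum_{i=1}^M(g_i(\xi)-\E[g_i(\xi)])
\end{equation*}
has derivative bounded by $2C'_d M/N^{1/2+a_1}=O(N^{1/2-a_1})$ on $\Gamma$, so its oscillation between points within distance $N^{-K}$ is $O(N^{1/2-a_1-K})$, negligible for $K$ large. Combining the net bound with this continuity estimate proves the first claim.

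The second claim, for $g_i^2$, follows by exactly the same argument: $\{g_i^2(\xi)\}_{i=1}^M$ is i.i.d.\ for each $\xi$, uniformly bounded by $C_d^{-2}$, and analytic with uniformly bounded derivative on $\Gamma$. The only genuine technical point—and hence the main thing to be careful about—is ensuring that the constants controlling the boundedness and the derivative bound are uniform in $i$ and $\xi\in\Gamma$; this uniformity is exactly what Lemma~\ref{lemma:bound} provides, so no new ideas are needed beyond packaging Hoeffding with an analyticity-driven Lipschitz bound.
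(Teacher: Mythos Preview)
Your argument is correct and is precisely the standard concentration-plus-net scheme that the paper alludes to: the paper's own proof simply cites Lemma~19 of \cite{Lee+Li} and notes that the $g_i$ are uniformly bounded (via Lemma~\ref{lemma:bound}), which is exactly the input your Hoeffding step uses. Your write-up in fact supplies the details the paper omits, and the only cosmetic point is that the derivative bound can be obtained directly from $g_i'(\xi)=-\sigma_i^2 m_{fc}'(\xi)/(1+\sigma_i m_{fc}(\xi))^2$ without invoking Cauchy on a neighborhood.
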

\begin{proof}
	It can be proved in the same way as Lemma 19 in \cite{Lee+Li}. Note that $g_i$ are bounded on $\Gamma$ thanks to Lemma \ref{lemma:bound}.
\end{proof}
\begin{lemma}\label{loop_equatoin}
	Suppose $z\in\C\backslash\R$. Then we have
	\begin{align}\label{eqn24}
		\hat m_{fc}-m_{fc}=A_N(\hat m_{fc}-m_{fc})^2+B_N
	\end{align}	
	where
	\begin{align*}
		A_N=\frac{m_{fc}'}{m_{fc}}\frac{M}{N}\int\Big(\frac{t}{1+tm_{fc}}\Big)^2d\nu(t)-\frac{m_{fc}'\hat m_{fc}}{N m_{fc}}\sum_{i=1}^M(g_i^2\hat g_i)
	\end{align*}
	\begin{multline*}
		B_N=-\frac{m_{fc}'}{N }\sum_{i=1}^M(g_i-\E g_i)+\frac{m_{fc}'\hat m_{fc}}{N m_{fc}}(\hat m_{fc}-m_{fc})\sum_{i=1}^M(g_i^2-\E[g_i^2])	-\frac{m_{fc}'\hat m_{fc}}{ m_{fc}}(\frac{M}{N}-\gamma_0)\int\frac{td\nu(t)}{1+tm_{fc}}\\
		+m_{fc}'(\hat m_{fc}-m_{fc})\int\frac{t^2d\nu(t)}{(1+tm_{fc})^2}(\frac{M}{N}-\gamma_0) 
		+\frac{m_{fc}'(m_{fc}-\hat m_{fc})}{N m_{fc}}\sum_{i=1}^M (g_i-\E g_i).
	\end{multline*}
\end{lemma}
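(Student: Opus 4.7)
The claim is a deterministic algebraic identity relating $\Delta := \hat m_{fc} - m_{fc}$ to the data of the two self-consistent equations \eqref{eqn25}. My plan is to subtract those equations, rewrite the result as a quadratic in $\Delta$, and then multiply by a carefully chosen prefactor which, after invoking the differentiated self-consistent equation, collapses the linear coefficient of $\Delta$ to $1$ plus explicit small corrections.

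First, from \eqref{eqn25} we have $\frac{1}{\hat m_{fc}}-\frac{1}{m_{fc}} = \frac{1}{N}\sum_{i=1}^{M}\hat g_i - \gamma_0 \E g_1$, where I write $\E g_1 := \int\frac{t}{1+tm_{fc}}d\nu(t)$ and $\E g_1^2 := \int\frac{t^2}{(1+tm_{fc})^2}d\nu(t)$. The LHS equals $-\Delta/(m_{fc}\hat m_{fc})$. On the RHS I expand $\hat g_i$ in powers of $\Delta$ using the identities $\hat g_i-g_i = -\Delta\, g_i\hat g_i$ and $g_i\hat g_i = g_i^2-\Delta\, g_i^2\hat g_i$, which give $\hat g_i = g_i-\Delta\, g_i^2+\Delta^2 g_i^2\hat g_i$. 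Inserting $\frac{1}{N}\sum g_i = \frac{M}{N}\E g_1 + \frac{1}{N}\sum(g_i-\E g_i)$ and the analogous splitting for $\sum g_i^2$, and then grouping all purely linear-in-$\Delta$ terms on the left, I obtain
$$\Delta\Big(\frac{1}{N}\sum_i g_i^2 - \frac{1}{m_{fc}\hat m_{fc}}\Big) \;=\; \frac{\Delta^2}{N}\sum_i g_i^2\hat g_i + \frac{1}{N}\sum_i(g_i-\E g_i) + \Big(\frac{M}{N}-\gamma_0\Big)\E g_1.$$

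Next I multiply both sides by $-m_{fc}'\hat m_{fc}/m_{fc}$. Differentiating the equation for $m_{fc}$ in \eqref{eqn25} yields $m_{fc}'/m_{fc}^2 = 1 + \gamma_0 m_{fc}'\E g_1^2$; combining this with the expansion $\hat m_{fc}/m_{fc} = 1 + \Delta/m_{fc}$ turns the bracket on the LHS into
$$1 + m_{fc}'\E g_1^2\Big[(\gamma_0-\tfrac{M}{N}) - \tfrac{M\Delta}{Nm_{fc}}\Big] - \frac{m_{fc}'\hat m_{fc}}{Nm_{fc}}\sum_i(g_i^2-\E g_i^2).$$
Moving everything except the $1$ to the RHS and solving for $\Delta$, the $\Delta^2$ coefficient is exactly $A_N$: the piece $\frac{m_{fc}'}{m_{fc}}\frac{M}{N}\E g_1^2$ comes from the $-M\Delta/(Nm_{fc})$ correction above, and the piece $-\frac{m_{fc}'\hat m_{fc}}{Nm_{fc}}\sum g_i^2\hat g_i$ comes from applying the prefactor to $\frac{\Delta^2}{N}\sum g_i^2\hat g_i$. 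All remaining terms assemble directly into $B_N$; the only nontrivial repackaging is to split $-\frac{m_{fc}'\hat m_{fc}}{Nm_{fc}}\sum(g_i-\E g_i)$ once more via $\hat m_{fc}/m_{fc} = 1 + \Delta/m_{fc}$, which simultaneously produces the first and the fifth summands of $B_N$.

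The proof is purely algebraic, so the only obstacle is bookkeeping. The single nonobvious choice is the prefactor $-m_{fc}'\hat m_{fc}/m_{fc}$: the factor $m_{fc}'$ is precisely what the differentiated self-consistent equation supplies to relate the leading parts of $\frac{1}{N}\sum g_i^2$ and $\frac{1}{m_{fc}\hat m_{fc}}$, while the factor $\hat m_{fc}/m_{fc}$ both accounts for the $\hat m_{fc}$ appearing in $B_N$ and provides, via its $\Delta/m_{fc}$ expansion, the two additional contributions that appear in $A_N$ and in the $m_{fc}-\hat m_{fc}$ summand of $B_N$. No analytic estimate is needed; \eqref{eqn24} holds as a deterministic identity for every $z\in\C\setminus\R$ at which $m_{fc}(z)$ and $\hat m_{fc}(z)$ are defined.
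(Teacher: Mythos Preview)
Your argument is correct and is essentially the same as the paper's: both subtract the two self-consistent equations in \eqref{eqn25}, expand $\hat g_i$ to second order in $\Delta=\hat m_{fc}-m_{fc}$ via $\hat g_i=g_i-\Delta g_i^2+\Delta^2 g_i^2\hat g_i$, and then use the differentiated self-consistent identity $m_{fc}'/m_{fc}^2=1+\gamma_0 m_{fc}'\E g_1^2$ to reduce the linear coefficient to $1$. The only cosmetic difference is that the paper first multiplies by $m_{fc}\hat m_{fc}$ (obtaining \eqref{eqn28}) and at the end divides by $m_{fc}^2/m_{fc}'$, whereas you apply the single prefactor $-m_{fc}'\hat m_{fc}/m_{fc}$ in one step; the algebra is identical.
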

\begin{proof}
By \eqref{eqn25}, 
\begin{align}\label{eqn27} 
\hat m_{fc}-m_{fc}=\hat m_{fc}m_{fc}(\frac{1}{m_{fc}}-\frac{1}{\hat m_{fc}})=-\hat m_{fc}m_{fc}\Big[\frac{1}{N}\sum_{i=1}^M(\hat g_i-\E g_i)+(\frac{M}{N}-\gamma_0)\int\frac{td\nu(t)}{1+tm_{fc}}\Big]
\end{align}
Note that
\begin{multline}\label{eqn26}
\sum_{i=1}^M(\hat g_i-\E g_i)=\sum_{i=1}^M( g_i-\E g_i)+\sum_{i=1}^M(\hat g_i- g_i)=\sum_{i=1}^M( g_i-\E g_i)+(m_{fc}-\hat m_{fc})\sum_{i=1}^Mg_i\hat g_i\\
=\sum_{i=1}^M( g_i-\E g_i)+(m_{fc}-\hat m_{fc})(\sum_{i=1}^Mg_i^2+(m_{fc}-\hat m_{fc})\sum_{i=1}^Mg_i^2\hat g_i)\\=\sum_{i=1}^M( g_i-\E g_i)+(m_{fc}-\hat m_{fc})\sum_{i=1}^Mg_i^2+(m_{fc}-\hat m_{fc})^2\sum_{i=1}^Mg_i^2\hat g_i\\
=\sum_{i=1}^M( g_i-\E g_i)+(m_{fc}-\hat m_{fc})\sum_{i=1}^M(g_i^2-\E[g_i^2])+(m_{fc}-\hat m_{fc})^2\sum_{i=1}^Mg_i^2\hat g_i-M(\hat m_{fc}-m_{fc})\E[g_i^2].
\end{multline}
Plugging \eqref{eqn26} into \eqref{eqn27}, we have
\begin{multline}\label{eqn28}
\hat m_{fc}-m_{fc}=-\frac{m_{fc}\hat m_{fc}}{N}\sum_{i=1}^M( g_i-\E g_i)+\frac{m_{fc}\hat m_{fc}}{N}(\hat m_{fc}-m_{fc})\sum_{i=1}^M(g_i^2-\E[g_i^2])\\-\frac{m_{fc}\hat m_{fc}}{N}(\hat m_{fc}-m_{fc})^2\sum_{i=1}^Mg_i^2\hat g_i
-\hat m_{fc}m_{fc}(\frac{M}{N}-\gamma_0)\int\frac{td\nu(t)}{1+tm_{fc}}+\frac{M}{N}\hat m_{fc}m_{fc}(\hat m_{fc}-m_{fc})\E[g_i^2]
\end{multline}
In \eqref{eqn28}, we replace the last term by
$$m_{fc}(\hat m_{fc}-m_{fc})\E[g_i^2]\Big(\frac{M}{N}(\hat m_{fc}-m_{fc})+m_{fc}(\frac{M}{N}-\gamma_0)\Big)+(\hat m_{fc}-m_{fc})m_{fc}^2\gamma_0\E[g_i^2],$$
then move the term $(\hat m_{fc}-m_{fc})m_{fc}^2\gamma_0\E[g_i^2]$ to the left hand side, then use the fact that 
\begin{align}\label{eqn29}
	1- m_{fc}^2\gamma_0\E[g_i^2]=1-\gamma_0m_{fc}^2\int \frac{t^2d\nu(t)}{(1+tm_{fc})^2}=\frac{m_{fc}^2}{m_{fc}'}
\end{align}
then we obtain \eqref{eqn24}. Here we used the definition of $g_i$ in the first identity of \eqref{eqn29} and used the self-consistent equation \eqref{self_consistent_eq} to find the $m_{fc}'$ for the second identity of \eqref{eqn29}.
 
\end{proof}
Let
$$\Omega_N'=\Omega_N\cap\Big\{\Big\vert \frac{1}{N^{\frac{1}{2}+0.1c_0}}\sum_{i=1}^N(g_i(\xi)-\E[g_i(\xi)])\Big\vert \le 1,\Big\vert \frac{1}{N^{\frac{1}{2}+0.1c_0}}\sum_{i=1}^N(g_i^2(\xi)-\E[g_i^2(\xi)])\Big\vert \le 1\quad\forall\xi\in\Gamma\Big\} $$
where $c_0\in(0,0.01)$ is defined in Definition \ref{def:model_of_X}.
Then $\P(\Omega')\to1$. On $\Omega'$ we have for all $z\in\Gamma $:
$$|A_N|\le C_1,\quad|B_N|\le C_1 N^{-0.5+0.1c_0} $$ where  $C_1>0$ is a constant determined by $d$. By Lemma \ref{loop_equatoin}, on $\Omega_N'$ we have for $z\in\Gamma$
$$\hat m_{fc}-m_{fc}=\frac{1+x_1}{2A_N}\quad\text{or}\quad \hat m_{fc}-m_{fc}=\frac{1+x_2}{2A_N}$$
where $x_{1,2}$ are square roots of $1-4A_NB_N$ such that $x_1$ is close to $-1$ and $x_2$ is close to $1$. So if $N>N_0(d)$ and $\Omega_N'$ holds, then
$$|1+x_1|\le 4C_1^2N^{-0.5+0.1c_0},\quad|1-x_2|\le4C_1^2 N^{-0.5+0.1c_0},\quad\forall z\in\Gamma$$
$$\Big|\frac{1+x_1}{2A_N}-\frac{1+x_2}{2A_N}\Big|\ge \frac{1}{2C_1},\forall z\in\Gamma$$
so by continuity, we have
\begin{align}\label{eqn23}
	\hat m_{fc}-m_{fc}=\frac{1+x_1}{2A_N},\forall z\in\Gamma
\end{align}
or
\begin{align}\label{eqn22}
\hat m_{fc}-m_{fc}=\frac{1+x_2}{2A_N},\forall z\in\Gamma
\end{align}
Suppose $z_0\in\Gamma$ and $\Im z_0=2d$, by Lemma \ref{lemma:convergence} and Theorem \ref{local_law_for_main_model}, we see $m_{fc}(z_0)-\hat m_{fc}(z_0)=m_{fc}(z_0)-  m_N(z_0)+m_N(z_0)-\hat m_{fc}(z_0)$ converges in probability to 0. Note that if $N>N_0(d)$, then $\quad|\frac{1+x_2}{2A_N(z_0)}|\ge \frac{1}{2C_1}$ holds on $\Omega_N'$, so \eqref{eqn22} cannot happen, so we have \eqref{eqn23}. Plugging \eqref{eqn23} into \eqref{eqn24}, we see that if $N>N_0(d)$, $\Omega_N'$ holds and $z\in\Gamma$ then 
$$(1+x_1)(1-x_1)=4A_NB_N$$
and
\begin{align}\label{main_estimation}
|\hat m_{fc}-m_{fc}|=|\frac{1+x_1}{2A_N}|\le |\frac{2B_N}{1-x_1}|\le |2B_N|\le 2C_1N^{-0.5+0.1c_0}
\end{align}
Putting \eqref{main_estimation} into the definitions of  $B_N$ we see that there exists a constant $C_2=C_2(d)>0$ such that if $\Omega_N'$ holds and $N>N_0(d)$ then 
	\begin{align*}
	B_N=-\frac{m_{fc}'}{N }\sum(g_i-\E g_i)+R_N
\end{align*}
where
$$|R_N|\le C_2N^{-\frac{1}{2}-c_0}$$ 
and therefore by \eqref{eqn24}, \eqref{main_estimation} and estimation of $A_N$,
$$\hat m_{fc}-m_{fc}=-\frac{m_{fc}'}{N }\sum(g_i-\E g_i)+W_N$$
where
$$|W_N|\le 4C_1^3N^{-1+0.2c_0}+C_2N^{-\frac{1}{2}-c_0}.$$

So, for $N>N_0(d)$, 
\begin{multline*}
\Big|\mathds1_{\Omega_N'}\frac{\sqrt N}{2\pi\i}\oint_\Gamma(m_{fc}(\xi)-\hat m_{fc}(\xi))f(\xi)d\xi-\mathds1_{\Omega_N'}\frac{\sqrt N}{2\pi\i}\oint_\Gamma\frac{m_{fc}'}{N }\sum(g_i-\E g_i)f(\xi)d\xi\Big|
\le C_3 N^{-c_0}\sup_{\xi\in\Gamma}|f(\xi)|
\end{multline*}
for some constant $C_3$. Since both $\P(\Omega_N')$ and $\P(\Omega_N)$ go to 1, we see that 
$$\mathds1_{\Omega_N'}\frac{\sqrt N}{2\pi\i}\oint_\Gamma(m_{fc}(\xi)-\hat m_{fc}(\xi))f(\xi)d\xi$$
has the same limit in distribution as
\begin{multline*}
\frac{\sqrt N}{2\pi\i}\oint_\Gamma\frac{m_{fc}'}{N }\sum(g_i-\E g_i)f(\xi)d\xi=\frac{1}{\sqrt N}\sum_{i=1}^M\Bigg\{\frac{1}{2\pi\i}\oint_\Gamma m_{fc}'(\xi)g_i(\xi)f(\xi)d\xi-\E\Bigg[\frac{1}{2\pi\i}\oint_\Gamma m_{fc}'(\xi)g_i(\xi)f(\xi)d\xi\Bigg]\Bigg\}\\
=	\frac{1}{\sqrt N}\sum_{i=1}^M\Bigg\{\frac{1}{2\pi\i}\oint_\Gamma \frac{\sigma_im_{fc}'(\xi)f(\xi)}{1+\sigma_im_{fc(\xi)}}d\xi-\E\Bigg[\frac{1}{2\pi\i}\oint_\Gamma \frac{\sigma_im_{fc}'(\xi)f(\xi)}{1+\sigma_im_{fc(\xi)}}d\xi\Bigg]\Bigg\}
	\end{multline*}
which, by central limit theorem, converges in distribution to a centered Gaussian distribution with variance
\begin{multline*}
\E\Bigg[\Bigg(\frac{1}{2\pi\i}\oint_\Gamma \frac{\sigma_im_{fc}'(\xi)f(\xi)}{1+\sigma_im_{fc}(\xi)}d\xi\Bigg)^2\Bigg]-\Bigg(\E\Bigg[\frac{1}{2\pi\i}\oint_\Gamma \frac{\sigma_im_{fc}'(\xi)f(\xi)}{1+\sigma_im_{fc}(\xi)}d\xi\Bigg]\Bigg)^2\\
=-\frac{1}{4\pi^2}\oint_\Gamma\oint_\Gamma\int_l^1\frac{t^2m_{fc}'(\xi_1)m_{fc}'(\xi_2)f(\xi_1)f(\xi_2)}{(1+tm_{fc}(\xi_1))(1+tm_{fc}(\xi_2))}d\nu(t)d\xi_1d\xi_2
+\frac{1}{4\pi^2\gamma_0^2}\Bigg(\oint_\Gamma f(\xi)m_{fc}'(\xi)(\xi+\frac{1}{m_{fc}(\xi)})d\xi\Bigg)^2
\end{multline*}
where we used the fact $\int_l^1\frac{td\nu(t)}{1+tm_{fc}(\xi)}=\gamma_0^{-1}(\xi+\frac{1}{m_{fc}(\xi)})$ in the last identity. So the proof of the main theorem is complete.

\section{Proof of Proposition \ref{prop_criteria}}\label{proof_of_examples}

\begin{definition}
Define $D=(-\infty,0)\cup(0,1)\cup(\frac{1}{l},\infty)$ and
$$g(x)=\frac{1}{x}+\gamma_0\int\frac{t}{1-xt}d\nu(t)\quad x\in D$$
	$$h_0(x):=\int_l^1\Big(\frac{xt}{1-xt}\Big)^2d\nu(t)\quad x\in D$$
	$$h_1(x):=\frac{1}{M}\sum_{i=1}^M\Big(\frac{x\sigma_i}{1-x\sigma_i}\Big)^2\quad x\in \R\backslash\{0,\sigma_1^{-1},\ldots,\sigma_M^{-1}\}.$$
\end{definition}

According to Remark 2.2 and the first paragraph of
Appendix A of \cite{Hachem+Hardy+Najim}, the support of $\rho_{fc}$ must be the union of a few   finite intervals:
$$\text{supp}(\rho_{fc})=[a_1,b_1]\cup\cdots\cup[a_k,b_k].$$

The following lemma is  Proposition 2.3 of \cite{Hachem+Hardy+Najim}. 
\begin{lemma}\label{prop2.3}
	A number $c$ satisfies $c=b_i$ for some $i\in\{1,\ldots, k\}$ (i.e., $c$ is the right endpoint of a connected components of $\text{supp}(\rho_{fc})$) if and only if one of the following two conditions is satisfied:
	\begin{enumerate}
		\item there exists $x\in D$ such that $c=g(x)$, $g'(x)=0$ and $g''(x)>0$
		\item there exists $x\in\partial D$ and $\epsilon>0$ such that: i) $(x-\epsilon,x)\subset D$, ii) $g(t)$ is decreasing on $(x-\epsilon,x)$ and iii) $c=\lim\limits_{t\to x-}g(t)$ 
	\end{enumerate}
	A number $c'$ satisfies $c'=a_i$ for some $i\in\{1,\ldots, k\}$ (i.e., $c'$ is the left endpoint of a connected components of $\text{supp}(\rho_{fc})$) if and only if one of the following two conditions is satisfied:
	\begin{enumerate}
		\item[3.] there exists $x\in D$ such that $c'=g(x)$, $g'(x)=0$ and $g''(x)<0$
		\item[4.] there exists $x\in\partial D$ and $\epsilon>0$ such that: i) $(x,x+\epsilon)\subset D$, ii) $g(t)$ is decreasing on $(x,x+\epsilon)$ and iii) $c'=\lim\limits_{t\to x+}g(t)$ 
	\end{enumerate}
\end{lemma} 
\begin{remark}\label{remark:2cases}
Proposition 2.3 of \cite{Hachem+Hardy+Najim} requires the edges in this lemma to be ``soft" (i.e., the density function $\rho_{fc}(t)$ goes to a finite value as $t$ approaches this edge). But this condition is automatically satisfied by our model because $\gamma_0\ne0$. It is known that only $a_1$ may fail to be soft and this happen only when $\gamma_0=1$.
\end{remark}

\begin{corollary}\label{coro_[L_-,L_+]}
In our model, $[L_-,L_+]$ satisfy the following conditions.
\begin{itemize}
	\item $L_+=1+\gamma_0\int\frac{t}{1-t}d\nu(t)$ or $L_+=g(x)$ for some $x\in(0,1)$ satisfying $h_0(x)=\frac{1}{\gamma_0}$.
	\item $L_-=1-\gamma_0\int\frac{lt}{t-l}d\nu(t)$ or $L_-=g(x)$ for some $x\in(-\infty,0)\cup(\frac{1}{l},\infty)$ satisfying $h_0(x)=\frac{1}{\gamma_0}$.
\end{itemize}
\end{corollary}
\begin{remark}\label{remark:two_cases_of_gamma_0}
Suppose $L_-=g(x)$ for some $x\in(-\infty,0)\cup(\frac{1}{l},\infty)$. If $\gamma_0>1$, then $x\in(-\infty,0)$. This is because the condition $g'(x)=0$ implies $\int(\frac{tx}{1-tx})^2d\nu(t)=\gamma_0^{-1}<1$, so $x$ cannot be in $(l^{-1},\infty)$, otherwise the integrand in  $\int(\frac{tx}{1-tx})^2d\nu(t)$ is larger than $1$. Similarly, if $\gamma_0\in(0,1)$, then $x\in(l^{-1},\infty)$.
\end{remark}
\begin{proof} Notice that $\partial D=\{0,1,\frac{1}{l}\}$.
For $L_+$, we see that $1$ is the only point in $\partial D$ which may satisfy Condition 2 in Lemma \ref{prop2.3} with a finite $c$. So $L_+$, as a right endpoint of a connected component of $\text{supp}(\rho_{fc})$, may be $\lim_{t\to1-}g(t)=1+\gamma_0\int\frac{t}{1-t}d\nu(t)$. On the other hand, if $x\in D$ satisfies Condition 1 in Lemma \ref{prop2.3}, then we claim that $x$ must be in $(0,1)$. We prove this claim by contradiction.
\begin{itemize}
	\item If $x\in(-\infty,0)$, by the condition $g''(x)>0$,
\begin{align}\label{eqn2}
	\int_l^1\Big(\frac{-tx}{1-xt}\Big)^3d\nu(t)>\frac{1}{\gamma_0}.
\end{align}	
According to the condition $g'(x)=0$, we have $\frac{1}{\gamma_0}=\int_l^1\Big(\frac{tx}{1-xt}\Big)^2d\nu(t)$. Plug this into \eqref{eqn2},  we have
\begin{align}\label{eqnn3}
	\int_l^1\Big(\frac{tx}{1-xt}\Big)^2\Big[\frac{-tx}{1-xt}-1\Big]d\nu(t)>0.
\end{align}
But \eqref{eqnn3} cannot hold since the integrand in it is negative because $x<0$.
\item If $x\in(\frac{1}{l},\infty)$, then similarly,
\begin{align} 
	\int_l^1\Big(\frac{tx}{xt-1}\Big)^3d\nu(t)<\frac{1}{\gamma_0}=	\int_l^1\Big(\frac{tx}{xt-1}\Big)^2d\nu(t).
\end{align}	
So  we have 
\begin{align}\label{eqnn5}
	\int_l^1\Big(\frac{tx}{xt-1}\Big)^2\Big[\frac{tx}{xt-1}-1\Big]d\nu(t)<0.
\end{align}
But \eqref{eqnn5} cannot hold since the integrand in it is positive because $x_0>1/l$. 
\end{itemize}
So the claim is true and we proved the first conclusion.

For $L_+$, we see that $\frac{1}{l}$ is the only point in $\partial D$ which may satisfy Condition 4 in Lemma \ref{prop2.3} with a finite $c$. So $L_-$, as a left endpoint of a connected component of $\text{supp}(\rho_{fc})$, may be $\lim_{t\downarrow l^{-1}}g(t)=1-\gamma_0\int\frac{lt}{t-l}d\nu(t)$. On the other hand, if $x\in D$ satisfies Condition 3 in Lemma \ref{prop2.3}, then we claim that $x$ must be in $(-\infty,0)\cup(l,\infty)$. We prove this claim by contradiction. Suppose $x\in(0,1)$ satisfies Condition 3 in Lemma \ref{prop2.3}. Then by $g''(x)<0$ we have 
$$	\int_l^1\Big(\frac{tx}{xt-1}\Big)^3d\nu(t)>\frac{1}{\gamma_0}.$$
But $\frac{1}{\gamma_0}$ equals $	\int_l^1\Big(\frac{tx}{xt-1}\Big)^2d\nu(t)$, so
 \begin{align} 
 	\int_l^1\Big(\frac{tx}{xt-1}\Big)^2\Big[\frac{tx}{xt-1}-1\Big]d\nu(t)>0
 \end{align}
which cannot be true because $x<1$. So the claim that 
 $x\in(-\infty,0)\cup(l,\infty)$ is true and we proved the second conclusion.
\end{proof}

\begin{lemma}\label{lemma:weaker_condition}
If \begin{align} \label{eq130}
	L_+=g(x_0) 
\end{align}
for some $x_0\in(0,1)$ with $h_0(x_0)=\frac{1}{\gamma_0}$, then 
$$\P(|L_+-\hat L_+|<\epsilon)\to1\quad\forall\epsilon>0.$$
If \begin{align}  \label{eq147} 
  L_-=g(x_0')
\end{align}
for some   $x_0'\in(-\infty,0)\cup(l^{-1},\infty)$ with $h_0(x_0')=\frac{1}{\gamma_0}$, then 
$$\P(|L_--\hat L_-|<\epsilon)\to1\quad\forall\epsilon>0.$$
\end{lemma}
\begin{proof}
	By Lemma 2.4--2.6 of \cite{Knowles+Yin}, 
\begin{align}\label{eq131}
\hat L_+=\frac{1}{x_1}+\frac{1}{N}\sum_{i=1}^M\frac{\sigma_i}{1-x_1\sigma_i}
\end{align}
where $x_1\in (0,\frac{1}{\max \sigma_i})$ with $h_1(x_1)=\frac{N}{M}$.
	Note that $x_0$ is deterministic and is independent of $N$, but $x_1$ is random and depends on $N$.

	Suppose $c>0$ is a small enough constant such that $[x_0-c,x_0+c]\subset(0,1)$. We claim that
	\begin{align}\label{eqn11}
		\P(x_1\in[x_0-c,x_0+c])\to1.
	\end{align}
	In fact, since $h_0$ is increasing on $[0,1]$, there exists $a>0$ such that
	$$h_0(x_0-c)+a<h_0(x_0)<h_0(x_0+c)-a.$$
	Note that both $\sqrt M(h_0(x_0-c)-h_1(x_0-c))$ and $\sqrt M(h_0(x_0+c)-h_1(x_0+c))$ converge in distribution to  Gaussian distributions, so
	\begin{align}\label{eqn10}
		\P\Big(|h_0(x_0\pm c)-h_1(x_0\pm c)|<a\Big)\to1.
	\end{align}
	When $|h_0(x_0\pm c)-h_1(x_0\pm c)|<a$ holds, we have
	$$h_1(x_0-c)<h_0(x_0-c)+a<h_0(x_0)<h_0(x_0+c)-a<h_1(x_0+c)$$
Notice $h_1(x_1)-h_0(x_0)\to0$ since  $|\frac{1}{\gamma_0}-\frac{N}{M}|\to0$. So if $N$ is large enough and $|h_0(x_0\pm c)-h_1(x_0\pm c)|<a$ holds,
	$$h_1(x_0-c)<h_1(x_1)<h_1(x_0+c)$$ 
	thus by monotonicity of $h_1$:
	\begin{align}\label{eqn9}
		x_0-c\le x_1\le x_0+c\quad\text{whenever }|h_0(x_0\pm c)-h_1(x_0\pm c)|<a
	\end{align}
	\eqref{eqn10} and \eqref{eqn9} prove \eqref{eqn11}.
	
	Now suppose $\epsilon>0$. Choose $\delta$ small enough such that $[x_0-\delta,x_0+\delta]\in(0,1)$
	and 
	\begin{align}\label{eqn14}
		\Big|\int_l^1\frac{t}{1-x_0t}d\nu(t)-\int_l^1\frac{t}{1-(x_0\pm\delta)t}d\nu(t)\Big|<\epsilon
	\end{align}
	and
	$$|\frac{1}{x_0}-\frac{1}{x_0\pm\delta}|<\epsilon.$$
	Since $\frac{1}{\sqrt M}\sum_{i=1}^M\frac{\sigma_i}{1-(x_0\pm\delta)\sigma_i}-\sqrt M\int_l^1\frac{t}{1-(x_0\pm\delta)t}d\nu(t)$ converges in distribution to a normal distribution,  we have
	\begin{align}\label{eqn12}
		\P\Big(\Big|\frac{1}{ M}\sum_{i=1}^M\frac{\sigma_i}{1-(x_0\pm\delta)\sigma_i}-\int_l^1\frac{t}{1-(x_0\pm\delta)t}d\nu(t)\Big|<\epsilon\Big)\to1.
	\end{align}
	By \eqref{eqn11} and \eqref{eqn12}, the event
	\begin{align}\label{eqn13}
		\{|x_0-x_1|<\delta\}\cap\Big\{\Big|\frac{1}{ M}\sum_{i=1}^M\frac{\sigma_i}{1-(x_0\pm\delta)\sigma_i}-\int_l^1\frac{t}{1-(x_0\pm\delta)t}d\nu(t)\Big|<\epsilon\Big\}
	\end{align}
	has a probability going to 1. On \eqref{eqn13},  we have by \eqref{eqn14}:
	$$ \int\frac{t}{1-x_0t}d\nu(t)-2\epsilon<\frac{1}{M}\sum_{i=1}^M\frac{\sigma_i}{1-x_1\sigma_i}<\int\frac{t}{1-x_0t}d\nu(t)+2\epsilon$$
	and then for large enough $N$
	\begin{multline*}
		|L_+-\hat L_+|\le |\frac{1}{x_1}-\frac{1}{x_0}|+\Big|\frac{1}{N}\sum_{i=1}^M\frac{\sigma_i}{1-x_1\sigma_i}-\gamma_0\int\frac{t}{1-x_0t}d\nu(t)\Big|\quad\text{(by \eqref{eq130} and \eqref{eq131})} \\
		\le|\frac{1}{x_0+\delta}-\frac{1}{x_0}|\vee|\frac{1}{x_0-\delta}-\frac{1}{x_0}|+ \frac{M}{N}\Big|\frac{1}{M}\sum_{i=1}^M\frac{\sigma_i}{1-x_1\sigma_i}-\int\frac{t}{1-x_0t}d\nu(t)\Big|
		+|\frac{M}{N}-\gamma_0|\int\frac{t}{1-x_0t}d\nu(t)\\
		\le\epsilon+\frac{M}{N}\cdot2\epsilon+\epsilon\quad\text{(since $|\frac{M}{N}-\gamma_0|$ goes to 0)}
	\end{multline*}
	Recall $\P(\eqref{eqn13})\to1$, so  we have proved
	$$\P(|L_+-\hat L_+|\le 2\epsilon(1+\frac{M}{N}))\to1.$$
Since $\frac{M}{N}$ is bounded and $\epsilon$ can be arbitrarily small, the first conclusion of Lemma \ref{lemma:weaker_condition} is proved.
	
Now we estimate $|L_--\hat L_-|$. Suppose \eqref{eq147} holds for some   $x_0'\in(-\infty,0)\cup(l^{-1},\infty)$ with $h_0(x_0')=\frac{1}{\gamma_0}$. By Remark \ref{remark:two_cases_of_gamma_0}, 	$x_0'\in(-\infty,0)$ if $\gamma_0>1$; $x_0'\in(l^{-1},\infty)$ if $\gamma_0\in(0,1)$.
	
By Lemma 2.4--2.6 of \cite{Knowles+Yin}   we have the following.   $\hat L_-=\frac{1}{x_1'}+\frac{1}{N}\sum_{i=1}^M\frac{\sigma_i}{1-x_1'\sigma_i}$
		where 
		$x_1'$ is a point in $(-\infty,0)\cup(\frac{1}{\min_i\sigma_i},\infty)$ such that $h_1(x_1')=\frac{N}{M}$. Similarly as what we explained in Remark \ref{remark:two_cases_of_gamma_0}, if $\gamma_0>1$, then $x_1'\in(-\infty,0)$; if $\gamma_0\in(0,1)$, then $x_1'\in(\frac{1}{\min_i\sigma_i},\infty)$.
	 
	Then using similar argument as above we can prove \eqref{eqn11} with $x_0$ and $x_1$ replaced by $x_0'$ and $x_1'$ respectively. Then similarly as above we can prove 
	$$\P(|L_--\hat L_-|\le 2\epsilon(1+\frac{M}{N}))\to1.$$

	Since $\frac{M}{N}$ is bounded and $\epsilon$ can be arbitrarily small, the second conclusion of Lemma \ref{lemma:weaker_condition} is proved. So we have proved Lemma \ref{lemma:weaker_condition}.
\end{proof}

\begin{proof}[Proof of Proposition \ref{prop_criteria}] 

Suppose $m_{fc}(L_+)\ne-1$. If $L_+= 1+\gamma_0\int\frac{t}{1-t}d\nu(t)$, then by Lemma \ref{prop2.3}, $$m_{fc}(L_+)=\lim_{t\uparrow1}m_{fc}(\frac{1}{t}+\gamma_0\int\frac{s}{1-ts}d\nu(s)).$$
By Proposition 2.1 of \cite{Hachem+Hardy+Najim}, 
$$\lim_{t\uparrow1}m_{fc}(\frac{1}{t}+\gamma_0\int\frac{s}{1-ts}d\nu(s))=-\lim_{t\uparrow1}t=-1$$
which is contradictory to the assumption $m_{fc}(L_+)\ne-1$. Notice that the Stieltjes transform in \cite{Hachem+Hardy+Najim} differs by a sign from the Stieltjes in this paper. So we proved $L_+\ne 1+\gamma_0\int\frac{t}{1-t}d\nu(t)$. By Corollary \ref{coro_[L_-,L_+]}, $L_+=g(x)$ for some $x\in(0,1)$ satisfying $h_0(x)=\frac{1}{\gamma_0}$. Then using Lemma \ref{lemma:weaker_condition} we obtain \eqref{eq155} under the assumption $m_{fc}(L_+)\ne-1$.

Now we prove \eqref{eq155} under the assumption that $\gamma_0\ne \big(\int\big(\frac{t}{1-t}\big)^2d\nu(t)\big)^{-1}$. By Corollary \ref{coro_[L_-,L_+]} and Lemma \ref{lemma:weaker_condition}, without loss of generality, we can suppose  
\begin{align}\label{eq149}
L_+=1+\gamma_0\int\frac{t}{1-t}d\nu(t)
\end{align}
By the second condition in Lemma \ref{prop2.3}, $g(t)$ is decreasing when $t$ is close enough but smaller than 1. So 
$$\lim_{t\uparrow1}\big(-\frac{1}{x^2}+\gamma_0\int(\frac{t}{1-xt})^2d\nu(t)\big)=\lim_{t\uparrow1}g'(t)\le0$$
which implies
$$\int(\frac{t}{1-t})^2d\nu(t)\le\frac{1}{\gamma_0}.$$
 This together with the assumption that $\gamma_0\ne (\int(\frac{t}{1-t})^2d\nu(t))^{-1}$ yields:
	$$\int(\frac{t}{1-t})^2d\nu(t)<\frac{1}{\gamma_0}.$$
	
As we saw in \eqref{eq131},
\begin{align}\label{eqs139}
	\hat L_+=\frac{1}{x_1}+\frac{1}{N}\sum_{i=1}^M\frac{\sigma_i}{1-x_1\sigma_i}
\end{align}
where $x_1\in (0,\frac{1}{\max \sigma_i})$ satisfying $h_1(x_1)=\frac{N}{M}$.

Since $\frac{1}{\sqrt M}\sum(\frac{\sigma_i}{1-\sigma_i})^2-\sqrt M\int(\frac{t}{1-t})^2d\nu(t)$ converges in distribution to a Gaussian distribution, we have
\begin{align}\label{eqs142} 
	\P(Q_N)\to1
\end{align}
where
$$Q_N=\Big\{\Big|\frac{1}{  M}\sum(\frac{\sigma_i}{1-\sigma_i})^2- \int(\frac{t}{1-t})^2d\nu(t)\Big|\le \frac{1}{2}\Big(\frac{1}{\gamma_0}-\int(\frac{t}{1-t})^2d\nu(t)\Big)\Big\}.$$
When $N$ is large enough and $Q_N$ holds, we must have $x_1>1$, because otherwise 
$$h_1(x_1)\le h_1(1)\le \int(\frac{t}{1-t})^2d\nu(t)+\frac{1}{2}\Big(\frac{1}{\gamma_0}-\int(\frac{t}{1-t})^2d\nu(t)\Big)=\frac{1}{2}\Big(\frac{1}{\gamma_0}+\int(\frac{t}{1-t})^2d\nu(t)\Big)<\frac{1}{\gamma_0}$$
which is contradictory to $h_1(x_1)=\frac{N}{M}\to\frac{1}{\gamma_0}$. Moreover, $x_1>1$ implies $x_1\in(1,\frac{1}{\max\sigma_i})$ and then
\begin{multline}\label{eqs143}
	\Big|\frac{1}{  M}\sum\frac{\sigma_i}{1-\sigma_i}-\frac{1}{  M}\sum\frac{\sigma_i}{1-x_1\sigma_i}\Big|\le|x_1-1|\frac{1}{M}\sum\frac{\sigma_i^2}{(1-\sigma_i)(1-x_1\sigma_i)}\\
	\le|1-x_1|\frac{1}{M}\sum\frac{\sigma_i^2}{(1-x_1\sigma_i)^2}=|1-x_1|\cdot\frac{N}{Mx_1^2}\quad\text{(by definition of $x_1$)}\\
	\le \frac{1-\max\sigma_i}{\max\sigma_i}\cdot\frac{N}{M}
\end{multline}

Suppose $\epsilon>0$. Since $\text{supp}\nu=[l,1]$, we have
$$\P(\max\sigma_i\in(1-\epsilon,1))\to1.$$ 
Similarly as \eqref{eqs142} we have
$$\P(Q_N')\to1$$
where
$$Q_N'=\Big\{\Big|\frac{1}{  M}\sum\frac{\sigma_i}{1-\sigma_i}- \int\frac{t}{1-t}d\nu(t)\Big|\le\epsilon\Big\}.$$
When $N$ is large enough and $Q_N\cap Q_N'\cap  \{\max\sigma_i\in(1-\epsilon,1)\}$ holds, we have by \eqref{eq149}, \eqref{eqs139} and the fact $x_1\in(1,\frac{1}{\max\sigma_i})\subset(1,\frac{1}{1-\epsilon})$ that
\begin{multline}\label{eqs144}
	|\hat L_+-L_+|\le |1-\frac{1}{x_1}|+|\gamma_0\int\frac{t}{1-t}d\nu(t)-\frac{1}{N}\sum_{i=1}^M\frac{\sigma_i}{1-x_1\sigma_i}|
	\le\frac{\epsilon}{1-\epsilon}+|\gamma_0-\frac{M}{N}|\int\frac{t}{1-t}d\nu(t)\\
	+\frac{M}{N}\Big|\int\frac{t}{1-t}d\nu(t)-\frac{1}{  M}\sum\frac{\sigma_i}{1-\sigma_i}\Big|+\frac{M}{N}\Big|\frac{1}{  M}\sum\frac{\sigma_i}{1-\sigma_i}-\frac{1}{  M}\sum\frac{\sigma_i}{1-x_1\sigma_i}\Big|\\
	\le \frac{2\epsilon}{1-\epsilon}+N^{-\frac{1}{2}-c_0}\int\frac{t}{1-t}d\nu(t)+\frac{M}{N}\epsilon 
\end{multline}
where we used \eqref{eqn18} and \eqref{eqs143} in the last inequality.
Note that the RHS of \eqref{eqs144} goes to $\frac{2\epsilon}{1-\epsilon}+\gamma_0\epsilon$ and the probability of $Q_N\cap Q_N'\cap  \{\max\sigma_i\in(1-\epsilon,1)\}$ goes to 1. So
$$\P(|\hat L_+-L_+|\le\frac{2\epsilon}{1-\epsilon}+\gamma_0\epsilon+\epsilon)\to1. $$
Since $\epsilon$ can be arbitrarily small, we have for any (small) $\epsilon'>0$,
\begin{align}\label{eqs145}
	\P(|\hat L_+-L_+|<\epsilon')\to1
\end{align}
and \eqref{eq155} is proved under the assumption that $\gamma_0\ne \big(\int\big(\frac{t}{1-t}\big)^2d\nu(t)\big)^{-1}$.

So the first conclusion of Proposition \ref{prop_criteria} is proved.  The second conclusion can be proved similarly.
\end{proof}

\appendix

\section{Proof of some auxiliary lemmas}\label{proofs}

\begin{proof}[Proof of Lemma \ref{auxiliary1}]
\begin{enumerate}
	\item 
$\hat L_+<C_0$ is from Lemma 2.5 of \cite{Knowles+Yin}.  
By Lemma 2.4 and Lemma 2.5 of \cite{Knowles+Yin},
$$\hat L_-=f(x)$$
where 
$$f(t)=-\frac{1}{t}+\frac{1}{N}\sum_{i=1}^M\frac{\sigma_i}{1+t\sigma_i}$$
and $x$ is the unique critical point of $f$ on $(-\infty,-(\min\sigma_i)^{-1})\cup(0,\infty)$. By definition,
\begin{align}\label{eq6}
	\frac{N}{M}=\frac{1}{M}\sum_{i=1}^M\Big(\frac{x\sigma_i}{1+x\sigma_i}\Big)^2
\end{align}
so,
\begin{align*}
\hat L_-=f(x)=-\frac{1}{x}+\frac{\frac{1}{M}\sum_{i=1}^M\frac{\sigma_i}{1+x\sigma_i}}{\frac{1}{M}\sum_{i=1}^M\big(\frac{x\sigma_i}{1+x\sigma_i}\big)^2}=\frac{ \sum_{i=1}^M\frac{\sigma_i}{(1+x\sigma_i)^2}}{ \sum_{i=1}^M\frac{x^2\sigma_i^2}{(1+x\sigma_i)^2}}
\end{align*}
So it is enough to show $|x|$ is bounded above by a constant when $N>N_0$. But this is directly from \eqref{eq6} because $\frac{N}{M}\to\gamma_0^{-1}\ne1$.

\item Suppose $\lambda_1'\ge\cdots\ge\lambda_N'\ge0$ are eigenvalues of $XX^T$. By Theorem 2.10 of \cite{Bloemendal+Erdos+Knowles+Yau+Yin}, there exists 
a constant $C_1>1$ such that
\begin{align}\label{eq113}
	\P(\lambda_1'<C_1\quad\text{and}\quad\lambda_{M\wedge N}'>C_1^{-1})>1-N^{-D}
\end{align}
when $N$ is large enough. This together with the facts that $\lambda_1=\|\Sigma^{1/2}XX^T\Sigma^{1/2}\|\le \|XX^T\|=\lambda_1'$ yield:
\begin{align}\label{eq116}
\P(\lambda_1\le C_0)>1-N^{-D}
\end{align}
for $N>N_0(D)$.

To estimate $\lambda_{M\wedge N}$, we do singular value decomposition:
$$(O_1XO_2)_{ij}=\delta_{ij}\sqrt{\lambda_{ii}'}$$
where $O_1$ and $O_2$ are orthogonal matrices. 
Recall that if $W$ is a real symmetric matrix with nonnegative eigenvalues, then its smallest eigenvalues equals
$$\min_{\|v\|=1}\langle v,Wv^T\rangle.$$

If $M\le N$, then $\lambda_{M\wedge N}=\lambda_M$ is the smallest eigenvalue of $\Sigma^{1/2}XX^T\Sigma^{1/2}$ and 
\begin{multline}\label{eq114}
\lambda_{M\wedge N}=\min_{\|v\|=1}\langle v,\Sigma^{1/2}XX^T\Sigma^{1/2} v^T\rangle=\min_{\|v\|=1}\langle v,\Sigma^{1/2}O_1^T\text{Diag}(\lambda_1',\ldots,\lambda_M')O_1\Sigma^{1/2} v^T\rangle\\
=\min_{\|v\|=1}\langle u, \text{Diag}(\lambda_1',\ldots,\lambda_M') u^T\rangle\quad(\text{where }u=v\Sigma^{1/2}O_1^T )\\
\ge \min_{\|v\|=1}\|u\|\cdot \lambda_M'\ge \sqrt l\cdot \lambda_M'=\sqrt l\cdot \lambda_{M\wedge N}'.
\end{multline}
If $M> N$, then $\lambda_{M\wedge N}=\lambda_N$ is the smallest eigenvalue of $X^T \Sigma X$ and 
\begin{align}\label{eq115} 
	\lambda_{M\wedge N}=\min_{\|v\|=1}\langle v,X^T \Sigma X v^T\rangle\ge l\cdot\min_{\|v\|=1}\langle v,X^T  X v^T\rangle=l\cdot\lambda_N'=l\cdot \lambda_{M\wedge N}'
\end{align}
By \eqref{eq113}, \eqref{eq114} and \eqref{eq115},  
$$\P(\lambda_{M\wedge N}\ge C_0^{-1})>1-N^{-D}$$
for $N>N_0(D)$. This together with \eqref{eq116} complete the proof of \eqref{eq42}.
 
\end{enumerate}	
\end{proof}

\begin{proof}[Proof of Lemma \ref{lemma:preliminary}] 
	\begin{enumerate}
		\item \eqref{eq21} is from the definitions of $G_M$ and $G_N$. \eqref{eq97} is induced from \eqref{eq26} and \eqref{eq21}. \eqref{eq22}	can be proved similarly.
		\item \eqref{eq24} is   the Ward's identity (see (3.6) of \cite{Benaych-Georges+Knowles}). Now we use the argument on Page 283 of \cite{Knowles+Yin} to   prove \eqref{eq25}. For simplicity we let $S=\emptyset$. When $S\ne\emptyset$ the proof is the same. Consider the singular value decomposition:
		\begin{align}\label{eq27}
			\Sigma X=UAV
		\end{align}
		where $U$ is an $M\times M$ orthogonal matrix, $V$ is an $N\times N$ orthogonal matrix and $A$ is an $M\times N$
		matrix with $A_{rs}=\delta_{rs}\sqrt{\eta_s}$
		where   $\eta_1\ge\cdots\ge\eta_N\ge0$ are  eigenvalues of $X^T\Sigma^2 X$. By \eqref{eq26},
		\begin{multline}\label{eq11www}
			\sum_{j=1}^M|G_{ij}|^2=\sum_{j=1}^M|	(\Sigma XG_NX^T\Sigma)_{ij}-\Sigma_{ij}|^2\le 2\sum_{j=1}^M|(\Sigma XG_NX^T\Sigma)_{ij}|^2+2\sum_{j=1}^M(\Sigma_{ij})^2\\
			\le2(\Sigma XG_NX^T\Sigma^2 X\bar G_NX^T\Sigma)_{ii}+2(\Sigma^2)_{ii}\quad\text{(note: $G_N$ is symmetric)}\\
			=2(\Sigma XG_NV^TDiag(\eta_1,\ldots,\eta_N)V\bar G_NX^T\Sigma)_{ii}+2(\Sigma^2)_{ii}\quad\text{(by \eqref{eq27})} \\
			=2\sum_{k=1}^N\eta_k|(\Sigma XG_NV^T)_{ik}|^2+2(\Sigma^2)_{ii}
			\le 2\eta_1\sum_{k=1}^N|(\Sigma XG_N)_{ik}|^2+2(\Sigma^2)_{ii} \\
			=2\|X^T\Sigma^2 X\|\cdot(\Sigma X G_N\bar G_NX^T\Sigma)_{ii}+2(\Sigma^2)_{ii}
		\end{multline}
		where $\|\cdot\|$ denotes the operator norm. Since there exists orthogonal matrix $O$ such that $G_N=ODiag(\frac{1}{\lambda_1-z},\ldots,\frac{1}{\lambda_N-z})O^T$, we have
		\begin{multline}\label{eq10www}
			(\Sigma X G_N\bar G_NX^T\Sigma)_{ii}=\sum_{k=1}^N\frac{1}{|\lambda_k-z|^2}\big((\Sigma XO)_{ik}\big)^2=\frac{1}{\Im z}\sum_{k=1}^N\Im\big(\frac{1}{\lambda_k-z}\big)\big((\Sigma XO)_{ik}\big)^2\\
			=\frac{\Im(\Sigma XG_NX^T\Sigma)_{ii}}{\Im z}=\frac{\Im(G_M)_{ii}}{\Im z}\quad\text{(by \eqref{eq26})}
		\end{multline}
		
		Plug \eqref{eq10www} and the fact $\|X^T\Sigma^2X\|=\|\Sigma XX^T\Sigma\|\le \|XX^T\|\|\Sigma\|^2\le\|XX^T\|$ into \eqref{eq11www}, then we have
		\begin{align}\label{eq12www}
			\sum_{j=1}^M|G_{ij}|^2\le  \frac{2\|XX^T\|}{\Im z}(\Im(G_M)_{ii})+2
		\end{align} 
		
		Use singular value decomposition for $X$: $$X=O_1RO_2$$
		where $O_1$ and $O_2$ are orthogonal matrices, $R_{ij}=\delta_{ij}\sqrt{\mu_i}$ and $\mu_1\ge\cdots\ge\mu_N\ge0$ are eigenvalues of $X^TX$.  So 
		$$z^{-1}XX^T-\Sigma^{-1}=O_1Diag(z^{-1}\mu_i-\sigma_i^{-1})O_1^T.$$
		Then, by the definition of $G_M$, we have 
		$$|(G_M)_{ii}|=\big|\sum_{k=1}^M\frac{z\sigma_k}{\mu_k\sigma_k-z}((O_1)_{ik})^2\big|\le \frac{|z|}{|\Im z|}\sum_{k=1}^M((O_1)_{ik})^2=\frac{|z|}{|\Im z|}.$$
		This together with \eqref{eq12www} complete the proof of \eqref{eq25}.
		
		\item \eqref{eq28} is a direct corollary of  Theorem 2.10 in \cite{Bloemendal+Erdos+Knowles+Yau+Yin}.

		\item 	The first identity in	\eqref{eq12} is from \eqref{eq13} and \eqref{eq9}. The second identity in	\eqref{eq12} is from \eqref{eq15} and \eqref{eq9}. \eqref{eq18} is from \eqref{eq15}, \eqref{eq13} and the definitions of $Z_i$ and $Z_p$.
		\item 	\eqref{eq10} is (A.13) of \cite{Knowles+Yin}. The last identity in \eqref{eq11} can be shown by taking derivative on each term of
		$$\frac{1}{\hat m_{fc}}=-z+\frac{1}{N}\sum_{i=1}^M\frac{\sigma_i}{1+\sigma_i\hat m_{fc}}$$
		which comes from \eqref{self_consistent_eq}.
		
		\item 	
		The first identity of \eqref{eq14} is from \eqref{self_consistent_eq}. Now we prove the second identity of \eqref{eq14}.

		For any $p\in\{M+1,\ldots,M+N\}$, 
		\begin{align*}
			\frac{1}{G_{pp}}=\frac{1}{m_N}\Big[1+(\frac{m_N}{G_{pp}}-1)\Big]=\frac{1}{m_N}+\frac{1}{m_N}\frac{m_N-G_{pp}}{G_{pp}}=\frac{1}{m_N}+\frac{m_N-G_{pp}}{m_N^2}+\frac{(m_N-G_{pp})^2}{m_N^2G_{pp}}
		\end{align*}
		which (together with $Nm_N=\sum_{p=M+1}^{M+N}G_{pp}$) implies:
		\begin{align} \label{eq16}
			\frac{1}{N}\sum_{p=M+1}^{M+N}\frac{1}{G_{pp}}=\frac{1}{m_N}+\frac{1}{N}\sum_{p=M+1}^{M+N}\frac{(m_N-G_{pp})^2}{m_N^2G_{pp}}
		\end{align}
		By \eqref{eq12}, 
		\begin{multline} \label{eq17}
			\frac{1}{N}\sum_{i=1}^M \Big(G_{ii}+\frac{\sigma_i}{1+m_N\sigma_i}\Big)=\frac{1}{N}\sum_{i=1}^M \Big(\frac{-\sigma_i}{1+\sigma_im_N+\sigma_iZ_i-\sigma_iA_i}+\frac{\sigma_i}{1+m_N\sigma_i}\Big)\\
			=\frac{1}{N}\sum_{i=1}^M \Big(\frac{\sigma_i^2(Z_i-A_i)}{(1+\sigma_im_N+\sigma_iZ_i-\sigma_iA_i)(1+m_N\sigma_i)}\Big)\\
			=\frac{1}{N}\sum_{i=1}^M\frac{\sigma_i^2Z_i}{(1+\sigma_i m_N)^2}-\frac{1}{N}\sum_{i=1}^M\frac{\sigma_i^3Z_i(Z_i-A_i)}{(1+\sigma_i m_N)^2(1+\sigma_im_N+\sigma_iZ_i-\sigma_iA_i)}\\
			-\frac{1}{N}\sum_{i=1}^M\frac{\sigma_i^2A_i}{(1+\sigma_i m_N)(1+\sigma_im_N+\sigma_iZ_i-\sigma_iA_i)}\\
			=\frac{1}{N}\sum_{i=1}^M\frac{\sigma_i^2Z_i}{(1+\sigma_i m_N)^2}+B_N(z).
		\end{multline}
		
		By \eqref{eq12}, \eqref{eq16}, \eqref{eq17} and the definition of $h$, 
		\begin{multline*} 
			h(m_N)-z=-\frac{1}{N}\sum_{p=M+1}^{M+N}\frac{1}{G_{pp}}+\frac{1}{N}\sum_{p=M+1}^{M+N}\frac{(m_N-G_{pp})^2}{m_N^2G_{pp}}+\frac{1}{N}\sum_{i=1}^M\frac{\sigma_i}{1+m_N\sigma_i}-z\\
			=\frac{1}{N}\sum_{i=1}^M G_{ii}-\frac{1}{N}\sum_{p=M+1}^{M+N}A_p+\frac{1}{N}\sum_{p=M+1}^{M+N}Z_p+\frac{1}{N}\sum_{p=M+1}^{M+N}\frac{(m_N-G_{pp})^2}{m_N^2G_{pp}}+\frac{1}{N}\sum_{i=1}^M\frac{\sigma_i}{1+m_N\sigma_i}\\
			=\frac{1}{N}\sum_{i=1}^M \Big(G_{ii}+\frac{\sigma_i}{1+m_N\sigma_i}\Big)-\frac{1}{N}\sum_{p=M+1}^{M+N}A_p+\frac{1}{N}\sum_{p=M+1}^{M+N}Z_p+\frac{1}{N}\sum_{p=M+1}^{M+N}\frac{(m_N-G_{p})^2}{m_N^2G_{pp}}\\
			=\frac{1}{N}\sum_{i=1}^M\frac{\sigma_i^2Z_i}{(1+\sigma_i m_N)^2}+B_N(z)-\frac{1}{N}\sum_{p=M+1}^{M+N}A_p+\frac{1}{N}\sum_{p=M+1}^{M+N}Z_p+\frac{1}{N}\sum_{p=M+1}^{M+N}\frac{(m_N-G_{pp})^2}{m_N^2G_{pp}}
		\end{multline*}
		So we proved the second identity of \eqref{eq14}. 
	\end{enumerate}	
	
\end{proof}

 \section{A binary tree argument: proof of Lemma \ref{lemma:binary_tree}}\label{sec:binary_tree}

\begin{proof}
This  proof  follows the steps in the proof of Proposition 6.1 in \cite{Benaych-Georges+Knowles} with some slight modification. 	Suppose $k\in\{1,2,\ldots\}$ and $z$ is in 
\begin{align}\label{eq121}
\Big\{x+\i y\Big|\tau\le x\le \tau^{-1}, N^{-1+\tau}\le y\le\tau^{-1}\Big\}.
\end{align}
	 According to \eqref{eq18},
	\begin{align}\label{eq103}
		\E\Bigg[\mathds1_{z\in D_\tau\cup D_\tau'}\cdot\Bigg|\frac{1}{N}\sum_{p=M+1}^{M+N}Z_p\Bigg|^{2k}\Bigg]=\sum_{B\in\mathcal B_k}\frac{1}{N^{2k}}\sum_{\p}\E[\mathds1_{z\in D_\tau\cup D_\tau'}\cdot V(\p)] \cdot \mathds1(  g_\p=B)
	\end{align}
	where 
	\begin{itemize}
		\item $\mathcal B_k$ is the set of partitions of $\{1,\ldots,2k\}$;
		\item $\p=(p_1,\ldots,p_{2k})$ runs over $\{M+1,\ldots,M+N\}^{2k}$;
		\item $g_\p$ is the partition of $\{1,\ldots,2k\}$ induced by the coincidence of the coordinates of $\p$ (i.e., $a$ and $b$ are in the same block if and only if $p_a=p_b$);
		\item 
		\begin{multline*}
			V(\p)= Z_{p_1}\cdots Z_{p_k}\bar Z_{p_{k+1}}\cdots \bar Z_{p_{2k}} \\
			= (1-\E_{p_1})\Big[\frac{1}{G_{p_1p_1}}\Big]\cdots (1-\E_{p_k})\Big[\frac{1}{G_{p_kp_k}}\Big](1-\E_{p_{k+1}})\Big[\frac{1}{\bar G_{p_{k+1}p_{k+1}}}\Big]\cdots(1-\E_{p_{2k}})\Big[\frac{1}{\bar G_{p_{2k}p_{2k}}}\Big] 
		\end{multline*}
	\end{itemize}
	Given $B\in\mathcal B_k$, and $\p\in\{M+1,\ldots,M+N\}^{2k}$ with $g_\p=B$,
	\begin{itemize}
		\item we let $L(B)=\{r\in\{1,\ldots,2k\}|\text{the block in $B$ containing $r$ is \{r\}}\}$;
		\item we set $\p_L=\{p_r|r\in L(B)\}$;
		\item if  $\p_L\subset T\cup\{x,y\}$, then we say $G_{xy}^{(T)}$ is maximally expanded;
		\item set $\mathcal A$ to be the set of monomials in elements of
		\begin{align}
			\Big\{G_{xy}^{(T)}\Big|T\subset \p_L, x\ne y, \{x,y\}\subset\p\backslash T\Big\}\cup\Big\{\frac{1}{G_{xx}^{(T)}}\Big| T\subset \p_L, x\subset\p\backslash T\Big\}	
		\end{align}
		\item for each $A\in\mathcal A$, let $d(A)$ denote the number of off-diagonal entries (i.e., entries of form $G_{xy}^{(T)}$) in $A$.
	\end{itemize}
	
	Given $B\in\mathcal B_k$, and $\p\in\{M+1,\ldots,M+N\}^{2k}$ with $g_\p=B$, suppose $A\in\mathcal A$. We can do the following algorithm to expand $A$.
	\begin{enumerate}
		\item If every factor of $A$ is maximally expanded or $d(A) \ge 2k + 1$, then stop the algorithm.
		\item Otherwise choose a factor of $A$ that is not maximally expanded. If this entry is off-diagonal, $G_{xy}^{(T)}$, write
		\begin{align}\label{eq92}
			G_{xy}^{(T)}=G_{xy}^{(Tu)}+\frac{G_{xu}^{(T)}G_{uy}^{(T)}}{G_{uu}^{(T)}}.
		\end{align}	
		where $u$ is the smallest element in $\p_L\backslash(T\cup\{x,y\})$. If the chosen factor is diagonal, $\frac{1}{G_{xx}^{(T)}}$, then write
		\begin{align}\label{eq93}
			\frac{1}{G_{xx}^{(T)}}=\frac{1}{G_{xx}^{(Tu)}}-\frac{G_{xu}^{(T)}G_{ux}^{(T)}}{G_{xx}^{(T)}G_{uu}^{(T)}G_{xx}^{(Tu)}}
		\end{align}
		for the smallest $u\in\p_L\backslash(T\cup\{x,y\})$. Here \eqref{eq92} and \eqref{eq93} are induced from \eqref{eq9}. Now write $A=w_0(A)+w_1(A)$ by replacing the chosen factor by the RHS of \eqref{eq92} or \eqref{eq93}. Here $w_0(A)$ (resp. $w_1(A)$) denotes the terms containing the first therm on RHS of \eqref{eq92} (resp. \eqref{eq93}). Obviously
		\begin{align}
			d(w_0(A))=d(A),\quad d(w_1(A))\ge\max(2,d(A)+1).
		\end{align}
		
	\end{enumerate}
	
	Given $B\in\mathcal B_k$, and $\p\in\{M+1,\ldots,M+N\}^{2k}$ with $g_\p=B$, we apply the above algorithm on each 
	$$A^r:=\frac{1}{G_{p_rp_r}}\quad r\in \{1,\ldots,2k\}.$$
	We set $A_0^r:=w_0(A^r)$ and $A_1^r:=w_1(A^r)$. Then we apply the algorithm again and set
	$$A_{00}^r:=w_0(A_0^r),\quad A_{01}^r:=w_0(A_1^r),\quad A_{10}^r:=w_1(A_0^r),\quad A_{11}^r:=w_1(A_1^r)$$
	and so on. Notice that the lower indices are sequences of 0 and 1. So this algorithm generates a binary tree. After finite many steps, the algorithm stops and we have:
	$$A^r=\sum_{\sigma\in\mathcal L_r}A_\sigma^r\quad\text{and}\quad V(\p)=\sum_{\sigma_1\in\mathcal L_1}\cdots\sum_{\sigma_{2k}\in\mathcal L_{2k}} (1-\E_{p_1})[A_{\sigma_1}^1]\cdots (1-\E_{p_k})[A_{\sigma_k}^k](1-\E_{p_{k+1}})[\bar A_{\sigma_{k+1}}^{k+1}]\cdots (1-\E_{p_{2k}})[\bar A_{\sigma_{2k}}^{2k}] $$
	where $\mathcal L_r$ is the set of leaves (i.e., vertices with no child) in the binary tree generated by the algorithm. It is easy to see:
	\begin{itemize}
		\item $A_\sigma^r\in \mathcal A$ such that $d(A_\sigma^r)\ge 2k+1$ or every factor in $A_\sigma^r$ is maximally expanded;
		\item by the stopping rule of the algorithm, we have $d(A_\sigma^r)\le 2k+2$ for each $\sigma\in\mathcal L_r$;
		\item  each application of $w_1$ increases $d(\cdot)$ by at least one, and in the first step by two, so each $\sigma\in\mathcal L_r$ has no more than $2k+1$ ones;
		\item the number of Green function entries increases by at most four through each application of $w_1$ and by zero through each application of $w_0$, so $A_\sigma^r$ has no more than $8k+5$ Green function entries;
		\item each Green function entry $G_{xy}^{(T)}$ or $\frac{1}{G_{xx}^{(T)}}$ in $A_\sigma^r$ satisfies $T\subset\p_L$, so by $|\p_L|\le2k$, the total number of upper indices in $A_\sigma^r$ is no more than $2k(8k+5)$;
		\item each application of $w_0$ increases the total number of upper indices by one, so $\sigma$ has no more then $2k(8k+5)$ zeros;
		\item since each $\sigma\in\mathcal L_r$ has no more than $2k+1$ ones and no more than $2k(8k+5)$ zeros, we have $|\mathcal L_r|<C_k$ where $C_k>0$ depends only on $k$.
	\end{itemize}
	
	\begin{lemma}\label{lemma:two_cases} 
		Suppose $k\in\{1,2,\ldots\}$, 	 $B\in\mathcal B_k$ and  $z$ is in \eqref{eq121}. For any (small) $\epsilon'>0$, if $N>N_0=N_0(\epsilon',\tau,k)$, then:
		$$|\E[\mathds1_{z\in D_\tau\cup D_\tau'}\cdot V(\p)]|\le   N^{\epsilon'}|\Psi(z)|^{2k+|L(B)|}\quad\quad\text{provided }\p\in\{M+1,\ldots,M+N\}^{2k} \text{ and } g_\p=B.$$
	\end{lemma}
	\begin{proof}
		According to Lemma \ref{lemma:for_binary_tree}, for any $D'>0$, if $N>N_0(\epsilon',D',\tau,k)$ then
		\begin{align}\label{eq102}
			P(A_N)>1-N^{-D'}
		\end{align}
		where
		\begin{multline*}
			A_N=\Big\{|G_{ij}^{(T)}|\le N^{\epsilon'}\Psi(z)\text{ and }\frac{1}{|G_{ii}^{(T)}|}\le N^{\epsilon'} \quad\forall   i\ne j,  |T|\le 2k, z\in D_\tau\cup D_\tau'\Big\}\\
			\cap\Big\{ \Big|(1-\E_i)\frac{1}{G_{ii}^{(T)}}\Big|\le N^{\epsilon'}\Psi(z) \quad\forall  |T|\le 2k, z\in D_\tau\cup D_\tau'\Big\} 
		\end{multline*}

		Suppose $\p\in\{M+1,\ldots,M+N\}^{2k}$ and $g_\p=B$. Recall that 
		\begin{align}\label{eq94}
			V(\p)=\sum_{\sigma_1\in\mathcal L_1}\cdots\sum_{\sigma_{2k}\in\mathcal L_{2k}} (1-\E_{p_1})[A_{\sigma_1}^1]\cdots (1-\E_{p_k})[A_{\sigma_k}^k](1-\E_{p_{k+1}})[\bar A_{\sigma_{k+1}}^{k+1}]\cdots (1-\E_{p_{2k}})[\bar A_{\sigma_{2k}}^{2k}].
		\end{align}
		Recall that each $A_{\sigma_i}^i$ on RHS of \eqref{eq94} has no more than $8k+5$ Green function entries. Also note that if $\sigma_i$ has no one, then then $A_{\sigma_i}^i=\frac{1}{G_{ii}^{(T)}}$ for some  $T$, thus $d(A_{\sigma_i}^i)=0$. On the other hand, if  $\sigma_i$ has ones, then the number of ones is at most $d(A_{\sigma_i}^i)-1$ (since the first $w_1$ adds two off-diagonal entries). So  we have on $A_N$:
		\begin{multline}\label{eq96}
		\mathds1_{z\in D_\tau\cup D_\tau'}\cdot 	|(1-\E_i)[A_{\sigma_i}^i]|\le 
			\begin{cases}
				N^{\epsilon'}\Psi(z)\quad&\text{if }d(A_{\sigma_i}^i)=0\\
				2N^{\epsilon'(8k+5)}\cdot|N^{\epsilon'}\Psi(z)|^{d(A_{\sigma_i}^i)}\quad&\text{if }d(A_{\sigma_i}^i)>0
			\end{cases}\\
			\le 	2N^{\epsilon'(8k+5)}\cdot|N^{\epsilon'}\Psi(z)|^{1\vee d(A_{\sigma_i}^i)}\le 2N^{\epsilon'(8k+5)}\cdot|N^{\epsilon'}\Psi(z)|^{1+(\text{number of ones in }\sigma_i)}
		\end{multline}
		Now we consider two cases. First, if $A_{\sigma_r}^r$ $(1\le r\le 2k)$ contains a factor which is not maximally expanded, then by the algorithm, $d(A_{\sigma_r}^r)\ge 2k+1$.  So   if $N>N_0(\epsilon',D',\tau,k)$, then on $A_N$ we have
		\begin{multline}\label{eq100}
			\mathds1_{z\in D_\tau\cup D_\tau'}\cdot	\Big|(1-\E_{p_1})[A_{\sigma_1}^1]\cdots (1-\E_{p_k})[A_{\sigma_k}^k](1-\E_{p_{k+1}})[\bar A_{\sigma_{k+1}}^{k+1}]\cdots (1-\E_{p_{2k}})[\bar A_{\sigma_{2k}}^{2k}]\Big|\\
			\le N^{C_1\epsilon'}|\Psi(z)|^{4k}\le N^{C_1\epsilon'}|\Psi(z)|^{2k+|L(B)|}
		\end{multline}
		where $C_1>0$ is a constant depending only on $k$.
		
		Second, if every factor in each $A_{\sigma_i}^i$ ($1\le i\le 2k$) is maximally expanded and
		\begin{align}\label{eq95}
			\E\Big[	\mathds1_{z\in D_\tau\cup D_\tau'}\cdot\Big((1-\E_{p_1})[A_{\sigma_1}^1]\cdots (1-\E_{p_k})[A_{\sigma_k}^k](1-\E_{p_{k+1}})[\bar A_{\sigma_{k+1}}^{k+1}]\cdots (1-\E_{p_{2k}})[\bar A_{\sigma_{2k}}^{2k}]\Big)\Big] 
		\end{align}
		is nonzero, then we claim that for each $s\in L(B)$, there exists an element $a(s)\in\{1,\ldots,2k\}\backslash\{s\}$ such that $A_{\sigma_{a(s)}}^{a(s)}$ contains a Green function entry with lower index $p_s$.  This is because otherwise there exists $s\in L(B)$ such that for each $i\in\{1,\ldots,2k\}\backslash\{s\}$, $A_{\sigma_i}^i$ has no factor with lower index $p_s$, and therefore $A_{\sigma_i}^i$ must be $H^{(p_s)}$-measurable (since all factors of $A_{\sigma_i}^i$ are maximally expanded), so \eqref{eq95} equals:
	\begin{align}\label{eq159}
	\E\Big[(1-\E_{p_s})\Big[	\mathds1_{z\in D_\tau\cup D_\tau'}\cdot\prod_{i\in[1,k]\atop i\ne s}(1-\E_{p_i})[A_{\sigma_i}^i]\prod_{j\in[k+1,2k]\atop j\ne s}(1-\E_{p_j})[\bar A_{\sigma_j}^{j}]\Big] \Big]=0
	\end{align}	
		but this is contradictory to our assumption that \eqref{eq95} is nonzero. So the claim must be true.
		For each $i\in\{1,\ldots,2k\}$, if $l\in a^{-1}(i)$ (i.e., $l\in L(B)$ and $a(l)=i$), then by the definition of $L(B)$, $p_l$ is not the lower index of $A^i=\frac{1}{G_{p_ip_i}}$, so $p_l$ must be added to lower index of $A_{\sigma_i}^i$ through an application of $w_1$ (because $w_0$ does not change lower index while $w_1$ add one element in lower index). So $\sigma_i$ should have at least $|a^{-1}(i)|$ ones. So by \eqref{eq96},   if $N>N_0(\epsilon',D',\tau,k)$, then on $A_N$ we have
		\begin{multline}\label{eq101}
			\mathds1_{z\in D_\tau\cup D_\tau'}\cdot	\Big|(1-\E_{p_1})[A_{\sigma_1}^1]\cdots (1-\E_{p_k})[A_{\sigma_k}^k](1-\E_{p_{k+1}})[\bar A_{\sigma_{k+1}}^{k+1}]\cdots (1-\E_{p_{2k}})[\bar A_{\sigma_{2k}}^{2k}]\Big|\\
		\le N^{C_2\epsilon'}|\Psi(z)|^{2k+\sum_{i=1}^{2k}|a^{-1}(i)|}
			=N^{C_2\epsilon'}|\Psi(z)|^{2k+|L(B)|}
		\end{multline}
		where $C_2>0$ is a constant depending only on $k$.
		
		On $A_N^c$ we have the naive bound of the Green function entries:
		\begin{align}\label{eq98}
			|G_{xy}^{(T)}|\le N^2(1+\max_{a,b}|X_{ab}|)\quad\quad\quad\text{(by \eqref{eq22})}
		\end{align}
		Using \eqref{eq15} and \eqref{eq13} (with $G$ replaced by $G^{(T)}$) and \eqref{eq98} we have:
		\begin{align}\label{eq99} 
			\frac{1}{|G_{xx}^{(T)}|}\le l^{-1}+\tau^{-1}+(M\vee N)^4(1+\max_{a,b}|X_{ab}|)\max_{a,b}|X_{ab}|^2
		\end{align}
		Note that $\epsilon'$ can be arbitrarily small. Using \eqref{eq98} and \eqref{eq99}   on $A_N^c$, using \eqref{eq100} and \eqref{eq101} on $A_N$, using \eqref{eq102} to control $\P(A_N)$, using \eqref{eq94}, we complete the proof of Lemma \ref{lemma:two_cases}.
	\end{proof}
	Now we continue to prove Lemma \ref{lemma:binary_tree}. 
	Note that if $B\in\mathcal B_k$, then 
	\begin{align}\label{eq104} 
		\frac{1}{N^{2k}}\sum_{\p\in\{M+1,\ldots,M+N\}^{2k}}\mathds1(g_\p=B)\le N^{|B|-2k}\le N^{-k+\frac{1}{2}|L(B)|}=(N^{-1/2})^{2k-|L(B)|}
	\end{align}
	where we used $2k-|B|\ge\frac{2k-|L(B)|}{2}$ in the last inequality.
	
	By \eqref{eq103}, Lemma \ref{lemma:two_cases} and \eqref{eq104}, for any (small) $\epsilon'>0$, if $N>N_0=N_0(\epsilon',\tau,k)$, then for any $z$ in \eqref{eq121}  we have
	\begin{align*}  		\E\Big[\mathds1_{z\in D_\tau\cup D_\tau'}\cdot \Big|\frac{1}{N}\sum_{p=M+1}^{M+N}Z_p\Big|^{2k}\Big]\le |\mathcal B_k|N^{\epsilon'}|\Psi(z)|^{2k+|L(B)|}(N^{-1/2})^{2k-|L(B)|}\le|\mathcal B_k|N^{\epsilon'}|\Psi(z)|^{4k} 
	\end{align*}
	where we used $\Psi(z)\ge N^{-1/2}$ in the last inequality. For any $\epsilon>0$ and $D>0$, choose $k$ large enough such that $2k\epsilon>2D$, then for $N>N_0(\epsilon,D,\tau)$ we have:
	\begin{multline*}
		\P(\mathds1_{z\in D_\tau\cup D_\tau'}\cdot |\frac{1}{N}\sum_{p=M+1}^{M+N}Z_p|>N^\epsilon|\Psi(z)|^2)\le N^{-2k\epsilon}|\Psi(z)|^{-4k}\E[\mathds1_{z\in D_\tau\cup D_\tau'}\cdot |\frac{1}{N}\sum_{p=M+1}^{M+N}Z_p|^{2k}]\\
		\le|\mathcal B_k|N^\epsilon N^{-2k\epsilon}<N^{-D}
	\end{multline*}
	where we used the fact that $|\mathcal B_k|$ is a constant depending only on $k$ in the last inequality. 
	Finally, using the ``lattice" argument we complete the proof of the first conclusion.
	
	For the second conclusion, we notice: 
	\begin{align} 
		\E\Big[\mathds1_{z\in D_\tau\cup D_\tau'}\cdot\Big|\frac{1}{N}\sum_{i=1}^M\frac{\sigma_i^2}{(1+\sigma_i\hat m_{fc})^2}Z_i\Big|^{2k}\Big]=\sum_{B\in\mathcal B_k}\frac{1}{N^{2k}}\sum_{\p}\E[\mathds1_{z\in D_\tau\cup D_\tau'}\cdot\tilde V(\p) ]\cdot \mathds1(  g_\p=B)
	\end{align}
	where $\p$ runs over $\{1,\ldots,M\}^{2k}$ and 
	\begin{multline*}
		\tilde V(\p)= \Big(\frac{\sigma_{p_1}^2}{(1+\sigma_{p_1}\hat m_{fc})^2}Z_{p_1}\Big)\cdots \Big(\frac{\sigma_{p_k}^2}{(1+\sigma_{p_k}\hat m_{fc})^2}Z_{p_k}\Big)\Big(\frac{\sigma_{p_{k+1}}^2}{(1+\sigma_{p_{k+1}}\overline{\hat m_{fc}})^2}\bar Z_{p_{k+1}}\Big)\cdots \Big(\frac{\sigma_{p_{2k}}^2}{(1+\sigma_{p_{2k}}\overline{\hat m_{fc}})^2}\bar Z_{p_{2k}}\Big)
	\end{multline*}
	By $\eqref{eq70}$, $|\frac{\sigma_i^2}{(1+\sigma_i\hat m_{fc})^2}|$ is bound uniformly in $i\in\{1,\ldots,M\}$ and in $z\in D_\tau\cup D_\tau'$. We can use the same proof to show that the conclusion of Lemma \ref{lemma:two_cases} holds with $V(\p)$ replaced by $\tilde V(\p)$, provided $\p\in\{1,\ldots,M\}^{2k}$ and $g_\p=B$. The remaining part of the proof of the second conclusion is similar as that of the proof of the first conclusion.
\end{proof}

\end{document}